\numberwithin{equation}{section}
\theoremstyle{definition} \addtolength{\oddsidemargin}{-0.5in}
\newtheorem{definition}{Definition}[section]
\newtheorem{theorem}[definition]{Theorem}
\newtheorem{proposition}[definition]{Proposition}
\newtheorem{lemma}[definition]{Lemma}
\theoremstyle{definition}
\newtheorem{corollary}[definition]{Corollary}
\newtheorem{example}[definition]{Example}
\newtheorem{remark}[definition]{Remark}
\newcommand{\N}{\mathbb{N}}
\newcommand{\Z}{\mathbb{Z}}
\newcommand{\R}{\mathbb{R}}
\newcommand{\p}{\mathcal{P}}
\newcommand{\T}{\mathbb{T}}
\begin{document}

\title
{Existence of a measurable saturated compensation function between
subshifts and its applications}
\author{Yuki Yayama}
\address{Centro de Modelamiento Matem\'{a}tico, Universidad de Chile, Av. Blanco Encalada 2120 Piso 7 Santiago, Chile} \email{yyayama@dim.uchile.cl}
\begin{abstract}
We show the existence of a bounded Borel measurable saturated
compensation function for a factor map between subshifts. As an
application, we find the Hausdorff dimension and measures of full
Hausdorff dimension for a compact invariant set of an expanding
nonconformal map on the torus given by an integer-valued diagonal
matrix. These problems were studied in \cite{Yayama} for a compact
invariant set whose symbolic representation is a shift of finite
type under the condition of the existence of a saturated
compensation function. We extend the results by presenting a
formula for the Hausdorff dimension for a compact invariant set
whose symbolic representation is a subshift without the condition
and characterizing the invariant ergodic measures of full
dimension as the ergodic equilibrium states of a constant multiple
of a measurable compensation function. For a compact invariant set
whose symbolic representation is a topologically mixing shift of
finite type, we study uniqueness and the properties for the unique
invariant ergodic measure of full dimension by using a measurable
compensation function. Our positive results narrow the possibility
of where an example having more than one measure of full dimension
can be found.
\end{abstract}
\maketitle
\pagestyle{myheadings}
\markright{Existence of a measurable saturated compensation function between
subshifts}
\section{Introduction}
Let $(X,\sigma_X)$ and $(Y,\sigma_Y )$ be subshifts. Let
$\pi:(X,\sigma_X) \rightarrow (Y,\sigma_Y )$ be a factor map,
i.e., $\pi$ is a continuous and surjective function that satisfies
$\pi\circ \sigma_X= \sigma_Y\circ\pi$. A function $F\in C(X)$ is a
\em{compensation function }\em for $(\sigma_X, \sigma_Y, \pi)$ if
$$\sup_{\mu\in M(X, \sigma_X)}\{h_{\mu}(\sigma_X)+\int (F+\phi\circ \pi) d \mu\}=\sup_{m\in M(Y, \sigma_Y)}\{h_{m}(\sigma_Y)+\int \phi dm\}
$$ for all $\phi\in C(Y)$. If
$F=G\circ\pi$ with $G\in C(Y)$, then $G\circ \pi$ is a
\em{saturated compensation function}\em. The concept of
compensation function was introduced by Boyle and Tuncel \cite{BT}
and studied by Walters \cite{Wcom} in connection with relative
pressure. If $(X, \sigma_X), (Y, \sigma_Y)$ are shifts of finite
type, then there always exists a compensation function
\cite{Wcom}. An example of a factor map between shifts of finite
type without a saturated compensation function was given by Shin
\cite{S3} and necessary and sufficient conditions for the
existence of a saturated compensation function were studied by
Shin \cite{S1}. In this paper, we show that for
$(X,\sigma_X),(Y,\sigma_Y)$ subshifts there always exists a
bounded Borel measurable saturated compensation function and
characterize the equilibrium states (Theorem \ref{thm1}). Using
measurable saturated compensation functions, we then study for a
fixed $\alpha>0$ the measures that maximize the weighted entropy
functional $\phi_{\alpha}=h_{\mu}(\sigma_{X}) +\alpha h_{\pi
\mu}(\sigma_{Y}).$ Finding such measures is useful in problems on
Hausdorff dimension (see \cite{GP}). When there is a saturated
compensation function between subshifts, the measures that
maximize $\phi_{\alpha}$ are, according to  Shin \cite{S2}, the
equilibrium states of a constant multiple of a saturated
compensation function. We extend the results for any subshifts
without the existence of a saturated compensation function
(Proposition \ref{sftwentropy} and Theorem \ref{wentropy}).

As an application, we study the problem on dimensions of compact
invariant sets of nonconformal expanding maps, in particular, we
consider the endomorphism of the torus given by $T(x,y)=(lx
\textnormal{ mod }1, my \textnormal{ mod }1 ), l>m\geq 2, l,m \in
\N$. Throughout this paper, by a measure $\mu$ of full dimension
for a compact $T$-invariant set $K$, we mean that $\mu$ is a Borel
probability measure of full Hausdorff dimension for $K$ and
$\mu(K)=1$.

In \cite{Yayama}, saturated compensation functions are used to
find the Hausdorff dimension and measures of full dimension of
some compact $T$-invariant sets, giving a systematic way to
approach the problem (see \cite{Yayama} for more details). As a
result, studying the existence of a saturated compensation
function was one of the main difficulties. We circumvent this
issue by using a {\em measurable} saturated compensation function
(which always exists between subshifts) instead of a {\em
continuous} saturated compensation function.

The compact invariant sets considered in this paper are a
generalization of the sets studied by Bedford \cite{B} and
McMullen \cite{Mc}. They independently answered the question on
Hausdorff dimension for compact $T$-invariant subsets which we
call {\em NC carpets} (see section \ref{application} for the
definition). Uniqueness of the invariant ergodic measure of full
dimension for an NC carpet was shown by Kenyon and Peres
\cite{KP}.  Using a coding map constructed by a Markov partition
for $T,$ one obtains a symbolic representation of the carpet which
is a full shift on finitely many symbols. A more general set whose
symbolic representation is a shift of finite type was considered
in \cite{Yayama}. Such a set is called an \em SFT-NC carpet. \em
The results in \cite{Yayama} gave a formula for the Hausdorff
dimension for an SFT-NC carpet for which a saturated compensation
function exists (when it is represented in symbolic dynamics) and
the $T$-invariant ergodic measures of full dimension are
identified as the ergodic equilibrium states of a constant
multiple of a saturated compensation function.

In this paper, we extend the formula for a compact $T$-invariant
set whose symbolic representation is a subshift
$(X,\sigma_X)$(Theorem \ref{application1}). We can do this because
there always exists a measurable saturated compensation function
between subshifts. Then the $T$-invariant ergodic measures of full
dimension are characterized as the ergodic equilibrium states of a
constant multiple of a measurable saturated compensation function,
which turns out to be a potential $-\Phi\circ\pi$ on
$(X,\sigma_X),$
 where $\Phi$ is a sequence of real-valued bounded Borel measurable functions on $(Y,\sigma_Y)$
 that satisfies the subadditivity condition (Theorem
\ref{application1}).
  For a compact $T$-invariant set whose symbolic representation is a topologically mixing shift of finite type,
  we study uniqueness of the $T$-invariant ergodic measure of full dimension and the properties of
  the unique
  measure (Theorem
\ref{uniqueness}). In this case, a measurable saturated
compensation
  function can be replaced by a much nicer function (see Corollary \ref{nice}). Using
  it, we study the equilibrium states for a potential
$-\Phi\circ\pi$ on $(X,\sigma_X),$ where $\Phi$ is a sequence of
real-valued continuous functions on $(Y,\sigma_Y)$ that satisfies
the subadditivity condition (Theorem \ref{HDDFORSFT}). $\Phi$
becomes an almost additive potential on $(Y,\sigma_Y)$ under some
conditions (see page \pageref{aadditiveinourcase}). In order to
study the equilibrium states for a potential $-\Phi\circ\pi$, we
show that $\Phi$ has a unique equilibrium state which is Gibbs
regardless of the condition of the almost additivity. These
results generalize the results in \cite{Yayama}. We use the work
on almost additive potentials by Barreira \cite{B2006} and Mummert
\cite{m2006} and the work on subadditive potentials by Cao, Feng
and Huang \cite{CFH}. Our positive results narrow the possibility
of where an example having more than one measure of full dimension
can be found.

\section{Background}
We give a brief overview of the recent results in pressure theory
for almost additive potentials and subadditive potentials
\cite{{B2006},{m2006},{CFH}}. These generalize the work of Ruelle
and Walters on the variational principle for continuous functions.
For notation and terminology not explained here including the
definitions of a full shift and shift of finite type, see
\cite{LM, Wtext}. Throughout this paper, we consider one-sided
subshifts. $(X, \sigma_X)$ is a subshift if $X$ is a closed
shift-invariant subset of $\{1,\cdots, k\}^{\N}$ for some $k\geq
1,$ where the shift $\sigma_X:X\rightarrow X$ is defined by
$\sigma_{X}(x)=x'$, for $x=(x_n)^{\infty}_{n=1}, x^{'}=
(x'_n)^{\infty}_{n=1}\in X, x'_{n}=x_{n+1}$ for all $n\in \N$.
Define a metric $d$  on $X$ by $d(x,x')={1}/{2^{k}}$ if
$x_i={x'}_i$ for all $1\leq i\leq k$ and $x_{k+1}\neq {x'}_{k+1},$
and $d(x,x')=0$ otherwise.  For each $n \in \N,$ denote by
$B_n(X)$ the set of all $n$-blocks that occur in points in $X.$
$x_1 \cdots x_n$ is an allowable word of length $n$ if $x_1\cdots
x_n\in B_n(X).$ If $x_1\cdots x_n \in B_n(X),$ then $[x_1 \cdots
x_n]$ is a cylinder in $X.$  Denote by $M(X,\sigma_X)$  the
collection of all $\sigma_X$-invariant Borel probability measures
on $X$ and by $Erg(X,\sigma_X)$ all ergodic members of
$M(X,\sigma_X)$.

Let $(X,\sigma_X)$ be a subshift and $f:X\rightarrow \R$  a
bounded Borel measurable function.
A $\sigma_X$-invariant Borel probability measure $\mu$ on $X$ is an {\em
equilibrium state }for $f$ if $h_{\mu}(\sigma_X)+\int f
d\mu=\sup_{\mu\in M(X,\sigma_X)}\{h_{\mu}(\sigma_X)+\int f
d\mu\}.$ Denote by $M_f(X,\sigma_X)$ the collection of all equilibrium states for $f$.
Let $\Phi=\{\phi_n\}^{\infty}_{n=1}$ be a sequence of real-valued continuous
functions on $(X,\sigma_X)$ for every $n\in \N$.  Barreira
\cite{B2006} and Mummert \cite{m2006} considered  almost additive
potentials. $\Phi=\{\phi_n\}^{\infty}_{n=1}$ is an {\em almost
additive potential} if for every $n,m\in \N$ and for every $x\in
X$ there exists a constant $C>0$ such that $-C+
\phi_{n}(x)+\phi_{m}({\sigma^{n}_X} x)\leq \phi_{n+m}(x)\leq C+
\phi_{n}(x)+\phi_{m}({\sigma^{n}_X} x).$ Let $\gamma_n(\Phi)=
\sup\{\vert \phi_n(x)-\phi_n(x')\vert: x_i={x'}_i \textnormal{ for all
} 1\leq i\leq n\}.$ Then $\Phi=\{\phi_n\}^{\infty}_{n=1}$ has {\em
bounded variation} if $\sup_{n\in \N} \gamma_n(\Phi)<\infty.$ The
notion of Gibbs measure for a continuous function  is
generalized.
\begin{definition}\cite{{B2006},{m2006}}\label{dGibbs}
Let $\Phi=\{\phi_n\}^{\infty}_{n=1}$ be an almost additive
potential on a shift of finite type $(X,\sigma_X)$. A Borel probability measure $\mu$ on $X$ is
a {\em Gibbs measure} for $\Phi$ if there exist $C>0$ and $P$ such
that
\begin{equation}\label{gp}
\frac{1}{C}<\frac{\mu([x_1 x_2 \cdots x_n])}{e^{-nP+\phi_n(x)}}<C
\end{equation}
for every $x \in X$ and $n\in \N.$
\end{definition}
The variational principle for almost additive potentials and
uniqueness of equilibrium states under some conditions were
separately studied by Barreira \cite{B2006} and Mummert
\cite{m2006}. We summarize the results that we shall need from
\cite{B2006} and \cite{m2006}.
\begin{theorem}\label{BM}(Special case of Theorems 1 and 5 in \cite{B2006} and Theorems 4 and 6 in \cite{m2006})
Let $(X,\sigma_X)$ be a topologically mixing shift of finite type
and $\Phi=\{\phi_n\}^{\infty}_{n=1}$ be an almost additive
potential on $(X,\sigma_X)$ with bounded variation. Then
\begin{equation}\label{vp}
P(\Phi)=\sup_{\mu\in
M(X,\sigma_X)}\{h_\mu(\sigma_X)+\lim_{n\rightarrow\infty}\frac{1}{n}\int\phi_n
d\mu\}
\end{equation}
where $P(\Phi)=\lim_{n \rightarrow
\infty}({1}/{n})\log (\sum_{i_1\cdots i_n \in B_n(X)}\sup e^{\phi_n(x)}),$ where
the supremum is taken over all $x\in [i_1\cdots i_n], n \in \N$. There exists a unique
measure that attains the maximum in (\ref{vp}). It is Gibbs and
mixing.
\end{theorem}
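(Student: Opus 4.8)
The plan is to prove the two inequalities of the variational principle separately, then to obtain the maximizing measure as a weak$^*$ limit point of measures built from the partition sums, and finally to upgrade it to a Gibbs and mixing measure, which in turn yields uniqueness. First I would record why both sides of (\ref{vp}) make sense. Integrating the almost additivity inequality shows that for a fixed $\mu\in M(X,\sigma_X)$ the sequence $n\mapsto\int\phi_n\,d\mu$ is subadditive up to the additive constant $C$, so Fekete's lemma gives the existence (and finiteness) of $\lim_n\frac1n\int\phi_n\,d\mu$; likewise, writing $Z_n=\sum_{w\in B_n(X)}\sup_{x\in[w]}e^{\phi_n(x)}$ and splitting an admissible word of length $n+m$ at position $n$, almost additivity gives $Z_{n+m}\le e^{C}Z_nZ_m$, so $n\mapsto e^{C}Z_n$ is submultiplicative and $P(\Phi)=\lim_n\frac1n\log Z_n$ exists.

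For the inequality $P(\Phi)\ge h_\mu(\sigma_X)+\lim_n\frac1n\int\phi_n\,d\mu$, let $\xi=\{[i]:i\in B_1(X)\}$, a generating partition, so $\xi_n:=\bigvee_{j=0}^{n-1}\sigma_X^{-j}\xi$ is the partition into $n$-cylinders and $h_\mu(\sigma_X)=\lim_n\frac1n H_\mu(\xi_n)$. Putting $p_w=\mu([w])$ and $a_w=\sup_{x\in[w]}e^{\phi_n(x)}$, one has $\int\phi_n\,d\mu\le\sum_w p_w\log a_w$ (since $\int_{[w]}\phi_n\,d\mu\le\mu([w])\sup_{[w]}\phi_n$), and hence $H_\mu(\xi_n)+\int\phi_n\,d\mu\le\sum_w p_w\log(a_w/p_w)\le\log\sum_w a_w=\log Z_n$ by concavity of the logarithm; dividing by $n$ and letting $n\to\infty$ gives the claim. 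This half needs neither mixing nor bounded variation.

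For the reverse inequality and the construction of a maximizer I would run the standard Misiurewicz-type argument adapted to the non-additive setting, as in Cao--Feng--Huang \cite{CFH}: fix $\varepsilon>0$, pick for each $w\in B_n(X)$ a point $x_w\in[w]$ with $e^{\phi_n(x_w)}\ge(1+\varepsilon)^{-1}\sup_{[w]}e^{\phi_n}$, set $\nu_n=Z_n^{-1}\sum_{w\in B_n(X)}e^{\phi_n(x_w)}\delta_{x_w}$ and $\mu_n=\frac1n\sum_{i=0}^{n-1}(\sigma_X^i)_*\nu_n$, and let $\mu$ be a weak$^*$ limit point of $(\mu_n)$. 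From the choice of $\nu_n$ one gets $H_{\nu_n}(\xi_n)+\int\phi_n\,d\nu_n\ge\log Z_n-\log(1+\varepsilon)$, and then, splitting an orbit of length $qn+r$ into $q$ blocks of length $n$, using concavity and invariance of entropy and almost additivity to compare $\phi_{qn}$ along an orbit with the corresponding sum of $\phi_n$'s, one transfers this lower bound to $h_\mu(\sigma_X)+\lim_n\frac1n\int\phi_n\,d\mu\ge P(\Phi)$, letting finally $\varepsilon\to0$. This is the step I expect to be the main obstacle: controlling uniformly in $n$ both the entropy lost in the averaging $\frac1n\sum_{i=0}^{n-1}(\sigma_X^i)_*$ and the error terms from almost additivity and from passing $\int\phi_n\,d\mu_n$ to the weak$^*$ limit, and it is precisely here that bounded variation (to replace $\phi_n(x_w)$ by $\phi_n$ at a nearby point and to handle the limit) is used.

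It remains to show that $\mu$ is Gibbs and mixing, which forces uniqueness. Topological mixing supplies a constant $p\in\N$ such that any $w\in B_n(X)$ and $v\in B_m(X)$ can be concatenated through a connecting block of length $p$; combined with the lower almost additivity inequality and bounded variation this gives a reverse estimate $Z_{n+m+p}\ge c\,Z_nZ_m$, which together with $Z_{n+m}\le e^{C}Z_nZ_m$ yields $Z_n\asymp e^{nP(\Phi)}$. Using these comparisons, the construction of $\mu$, almost additivity and bounded variation (to pass from $\sup_{[w]}\phi_n$ to $\phi_n(x)$ for an arbitrary $x\in[w]$), one verifies the two-sided estimate $\mu([x_1\cdots x_n])\asymp e^{-nP(\Phi)+\phi_n(x)}$, i.e.\ $\mu$ is a Gibbs measure in the sense of Definition \ref{dGibbs}. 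Any two Gibbs measures for $\Phi$ then agree on cylinders up to a uniform multiplicative constant, hence are mutually absolutely continuous with bounded Radon--Nikodym derivative; since the Gibbs bounds together with the mixing of the underlying shift make each such measure mixing and in particular ergodic, the Radon--Nikodym derivative is constant, so the Gibbs measure is unique and coincides with the equilibrium state constructed above. This reproduces the content of the cited results of Barreira \cite{B2006} and Mummert \cite{m2006}, whose proofs I would otherwise invoke directly --- a Carath\'eodory-structure non-additive thermodynamic formalism in the former, and a Ruelle--Perron--Frobenius operator adapted to almost additive potentials in the latter.
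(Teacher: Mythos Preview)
The paper does not give its own proof of Theorem~\ref{BM}: it is stated purely as a citation of Barreira \cite{B2006} and Mummert \cite{m2006}, and is used later as a black box. So there is no in-paper argument to compare against, and your closing remark that one ``would otherwise invoke [those proofs] directly'' is in fact exactly what the paper does.

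That said, your outline is a sound reconstruction of the Barreira-style argument, and it matches almost exactly the template the paper itself follows in Section~\ref{application} (Lemmas~\ref{key2}--\ref{ergodic} and Proposition~\ref{uniqued}) when it re-proves the analogous statement for the specific subadditive potential $\phi_n(y)=\log|\pi^{-1}[y_1\cdots y_n]|^{1/(\alpha+1)}$, which is \emph{not} almost additive in general. There the order of construction is slightly different from yours: one first takes a weak$^*$ limit $\nu$ of the unaveraged atomic measures $\nu_n$, shows $\nu$ already satisfies the Gibbs bounds (your $Z_n\asymp e^{nP(\Phi)}$ is Lemma~\ref{key2}, the Gibbs bounds for $\nu_l$ are Lemma~\ref{gibbsfornul}), and only then Ces\`aro-averages $\nu$ to obtain an invariant Gibbs measure (Lemma~\ref{equistateY}); ergodicity/mixing and uniqueness then follow as in your last paragraph (Lemmas~\ref{topmix5}, \ref{ergodic}). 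Your route --- average first to get $\mu_n$, pass to the limit, and then verify Gibbs bounds for the resulting equilibrium state --- is workable but slightly more awkward, since the Gibbs inequalities are most naturally checked on the $\nu_l$'s and then transported through the averaging, rather than read off from the Misiurewicz construction after the fact. Either way the ingredients (almost additivity, bounded variation, mixing gap $p$) are used exactly where you indicate.
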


The variational principle for subadditive potentials was studied
by Cao, Feng, and Huang \cite{CFH}. Let
$\Phi=\{\phi_n\}^{\infty}_{n=1}$ be a sequence of real-valued
continuous functions on $(X, \sigma_X)$. Suppose that $\Phi$
satisfies the subadditivity condition, i.e., for every $n,m\in \N$
and $x \in X,$ $\phi_{n+m}(x)\leq \phi_{n}(x)+
\phi_{m}({\sigma^{n}_X} x)$. Then $\Phi$ is a {\em subadditive
potential}. We note that if $\phi\in C(X),$ defining $\phi_n(x)=
\sum^{n-1}_{k=0} \phi\circ \sigma_X^{k},$
$\Phi=\{\phi_n\}^{\infty}_{n=1}$ is subadditive. We define the
topological pressure of $\Phi$ using separated sets (see
\cite{CFH}). Let $\epsilon>0.$ A set $E$ is called an
$(n,\epsilon)$ separated subset of $X$ with respect to $\sigma_X$
if $\max_{0\leq i\leq
n-1}d(\sigma^{i}_{X}(x),\sigma^{i}_{X}(y))>\epsilon$ for all
$x,y\in E, x \neq y$. Define $P_n(\sigma_X, \Phi,
\epsilon)=\sup\{\sum_{x\in E} e^{\phi_n(x)}: E \textnormal{ is an
} (n,\epsilon)\textnormal{ separated subset of } X\}$ and
$P(\sigma_X, \Phi, \epsilon)=\limsup_{n\rightarrow \infty
}(1/n)\log P_n(\sigma_X, \Phi, \epsilon).$ Define the {\em
subadditive topological pressure of $\Phi$} with respect to
$\sigma_X$ by $P(\sigma_X, \Phi)=\lim_{\epsilon\rightarrow
0}P(\sigma_X, \Phi, \epsilon).$
\begin{theorem}\label{CP}(Special case of Theorem 1.1 in \cite{CFH})
Let $\Phi=\{\phi_n\}^{\infty}_{n=1}$ be a subadditive potential on
a subshift $(X,\sigma_X)$. Then
\begin{equation*}
P(\sigma_X, \Phi)=\sup_{\mu\in
M(X,\sigma_X)}\{h_{\mu}(\sigma_X)+\lim_{n \rightarrow
\infty}\frac{1}{n}\int\phi_n d\mu\}.
\end{equation*}
$P(\sigma_X, \Phi)=-\infty$ if and only if $\lim_{n\rightarrow\infty}(1/n)\int \phi_n d\mu=-\infty$ for all $\mu\in M(X,\sigma_X)$.
\end{theorem}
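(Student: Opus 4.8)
The plan is to establish the variational principle by the standard two-inequality argument, adapted to a subadditive $\Phi$ and simplified by the fact that $(X,\sigma_X)$ is a subshift, so that the partition $\xi=\{[1],\dots,[k]\}\cap X$ of $X$ into one-cylinders is a clopen generator. I would first record two routine facts. Since $\Phi$ is subadditive, each $(n+m)$-block factors as an $n$-block followed by an $m$-block and $\phi_{n+m}(x)\le\phi_n(x)+\phi_m(\sigma_X^n x)$, so $a_n:=\log\sum_{w\in B_n(X)}\sup_{x\in[w]}e^{\phi_n(x)}$ is subadditive and $\lim_n a_n/n$ exists; moreover, in the subshift setting the separated-set definition computes the same number, i.e.\ $P(\sigma_X,\Phi)=\lim_n a_n/n$ (an $(n,\epsilon)$-separated set with $\epsilon\in[1/2,1)$ is precisely a set of points with pairwise distinct $n$-blocks, and shrinking $\epsilon$ only changes the relevant block length by a bounded amount), in agreement with the formula used in Theorem \ref{BM}. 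Also, for $\mu\in M(X,\sigma_X)$ the sequence $n\mapsto\int\phi_n\,d\mu$ is subadditive by $\sigma_X$-invariance, so $\Phi_*(\mu):=\lim_n\frac1n\int\phi_n\,d\mu=\inf_n\frac1n\int\phi_n\,d\mu$ exists in $[-\infty,\infty)$.

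For the inequality $\sup_\mu\{h_\mu(\sigma_X)+\Phi_*(\mu)\}\le P(\sigma_X,\Phi)$, fix $\mu$, use $h_\mu(\sigma_X)=\lim_n\frac1n H_\mu(\bigvee_{i=0}^{n-1}\sigma_X^{-i}\xi)$ (valid since $\xi$ generates), and combine $\int_{[w]}\phi_n\,d\mu\le\mu([w])\sup_{[w]}\phi_n$ with the elementary bound $\sum_w p_w\log(a_w/p_w)\le\log\sum_w a_w$, applied with $a_w=e^{\sup_{[w]}\phi_n}$, to get $H_\mu(\bigvee_{i=0}^{n-1}\sigma_X^{-i}\xi)+\int\phi_n\,d\mu\le a_n$; dividing by $n$ and letting $n\to\infty$ gives the claim. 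In particular, if $P(\sigma_X,\Phi)=-\infty$ then $h_\mu(\sigma_X)+\Phi_*(\mu)\le-\infty$ for every $\mu$, and since $0\le h_\mu(\sigma_X)\le\log k$ this forces $\Phi_*(\mu)=-\infty$ for all $\mu$, one direction of the final assertion.

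For the reverse inequality $P(\sigma_X,\Phi)\le\sup_\mu\{h_\mu(\sigma_X)+\Phi_*(\mu)\}$ (trivial when $P=-\infty$), I would run a Misiurewicz-type construction: for each $n$ pick $x_w\in[w]$ attaining $\sup_{[w]}\phi_n$, set $Z_n=\sum_{w\in B_n(X)}e^{\phi_n(x_w)}$, $\nu_n=\sum_w Z_n^{-1}e^{\phi_n(x_w)}\delta_{x_w}$, $\mu_n=\frac1n\sum_{i=0}^{n-1}(\sigma_X^i)_*\nu_n$, and pass to a weak$^{*}$ limit $\mu_{n_j}\to\mu\in M(X,\sigma_X)$ along a subsequence. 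A direct computation gives the identity $\frac1n\log Z_n=\frac1n H_{\nu_n}(\bigvee_{i=0}^{n-1}\sigma_X^{-i}\xi)+\frac1n\int\phi_n\,d\nu_n$, whose left side tends to $P(\sigma_X,\Phi)$. The entropy term is controlled by the classical blocking and concavity estimate used in the proof of the variational principle: for fixed $q$, $\limsup_j\frac1{n_j}H_{\nu_{n_j}}(\bigvee_{i=0}^{n_j-1}\sigma_X^{-i}\xi)\le\frac1q H_\mu(\bigvee_{i=0}^{q-1}\sigma_X^{-i}\xi)$, using that cylinders are clopen so weak$^{*}$ convergence passes to partition entropies, and $q\to\infty$ bounds it above by $h_\mu(\sigma_X)$. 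The potential term is controlled by the subadditive analogue: iterating $\phi_n(x)\le\phi_q(x)+\phi_{n-q}(\sigma_X^q x)$ from $q$ different offsets and averaging yields $\phi_n(x)\le\frac1q\sum_{i=0}^{n-1}\phi_q(\sigma_X^i x)+C(q)$ with $C(q)$ independent of $n$ (all correction terms involve only $\phi_0,\dots,\phi_q$, which are bounded on the compact space $X$); integrating against $\nu_n$ and using $\frac1q\sum_{i=0}^{n-1}\int\phi_q\circ\sigma_X^i\,d\nu_n=\frac nq\int\phi_q\,d\mu_n$ gives $\limsup_j\frac1{n_j}\int\phi_{n_j}\,d\nu_{n_j}\le\frac1q\int\phi_q\,d\mu$, hence $\le\Phi_*(\mu)$ as $q\to\infty$. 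Adding the two $\limsup$ bounds to the identity gives $P(\sigma_X,\Phi)\le h_\mu(\sigma_X)+\Phi_*(\mu)$, completing the variational principle; the remaining direction of the final assertion is then immediate, since if $\Phi_*(\mu)=-\infty$ for all $\mu$ the supremum is $-\infty$. The main obstacle I anticipate is the bookkeeping in the subadditive blocking step — matching the offset-decomposed index set to $\{0,\dots,n-1\}$ up to an $O(q)$ discrepancy and verifying the corrections are uniformly bounded in $n$ — together with arranging that a single weak$^{*}$ limit $\mu$ controls both the entropy and the potential along the same subsequence.
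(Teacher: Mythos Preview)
The paper does not give its own proof of this result; Theorem~\ref{CP} is quoted without proof as a special case of Theorem~1.1 in \cite{CFH}. Your sketch is correct and is, in outline, exactly the Cao--Feng--Huang argument specialized to subshifts: the one-cylinder partition is a clopen generator, which lets you replace separated sets by block sums and pass weak$^{*}$ limits through partition entropies without an $\epsilon\to0$ step, and the subadditive offset-averaging estimate you describe is the one additional ingredient their proof needs beyond Misiurewicz's classical construction.
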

A $\sigma_X$-invariant Borel probability measure $\mu$ on $X$ is an {\em equilibrium state} for $\Phi$ if $h_{\mu}(\sigma_X)+ \lim_{n \rightarrow
\infty}(1/n)\int\phi_n d\mu =\sup_{\mu\in M(X, \sigma_X)}\{h_{\mu}(\sigma_X)+\lim_{n \rightarrow
\infty}(1/n)\int\phi_n d\mu\}.$ We denote
 the collection of all equilibrium states for $\Phi$ by $M_{\Phi}(X,\sigma_X)$. A Borel probability measure $\mu$ is a {\em Gibbs measure} for
 a subadditive potential $\Phi$ on $(X,\sigma_X)$ if (\ref{gp}) in Definition \ref{dGibbs} is satisfied. \label{defesp}

Next we summarize basic definitions. Let $(X, \sigma_X )$ and $(Y,
\sigma_Y )$ be subshifts and $\pi:(X,\sigma_X) \rightarrow
(Y,\sigma_Y)$ a factor map. If the $i$-th position of the image of
$x$ under $\pi$ depends only on $x_i,$ then $\pi$ is a one-block
factor map. Throughout the paper, we assume that $\pi$ is a
one-block factor map.   \label{En} For each $n\in \N$ and
$y=y_1\cdots y_n\cdots \in Y$, denote by $E_{n}(y)$ a set
consisting of exactly one point from each cylinder $[x_1\cdots
x_n]$ in $X$ such that $\pi ([x_1 \cdots x_n]) \subseteq [y_1
\cdots y_n].$ We note that for $y, y'\in Y$ with $y_i=y^{'}_{i}$
for all $1\leq i\leq n,$  the cardinality of $E_n(y)$ is equal to
that of $E_n(y').$ Denote by $\vert \pi^{-1}[y_1 \cdots y_n]\vert
$ the cardinality of the set $E_n(y).$ Denote by $D_n(y)$ a set
consisting of one point from each nonempty set $\pi^{-1}(y)\cap
[i_1 \cdots i_n]$ in $X$ and by $\vert D_n(y)\vert$ the
cardinality of the set $D_n(y)$. Using this notation, for $y=y_1
\cdots y_n \cdots \in Y, \vert D_{n}(y) \vert \leq \vert
\pi^{-1}[y_1\cdots y_n]\vert.$ If $X$ is an irreducible shift of
finite type, $\vert \pi^{-1}[y_1 \cdots y_n]\vert $ is the number
of blocks $x_1\cdots x_n$ of length $n$ in $X$ that are mapped to
the block $y_1 \cdots y_n$ in $Y.$

 Using the above notation, we now review some results on relative pressure. Relative pressure was studied by Ledrappier and Walters \cite{LW}.
\begin{theorem}\cite{LW}(Special case of the relative variational principle)
Let $(X, \sigma_X), (Y, \sigma_Y)$ be subshifts and $\pi:(X,
\sigma_X) \rightarrow (Y,\sigma_Y)$ a factor map. Let $f \in C(X)$
and $m\in M(Y,\sigma_Y).$ Then there exists a Borel measurable
function $P(\sigma_X, \pi, f):Y\rightarrow \R$ such that
$$\int P(\sigma_X, \pi,f)dm=\sup\{h_{\mu}(\sigma_X)-h_{m}(\sigma_Y)+\int f d\mu :\mu\in M(X,\sigma_X) \textnormal{ and }\pi\mu=m\}.$$
\end{theorem}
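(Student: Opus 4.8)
\textit{Proof proposal.} The plan is to reconstruct the Ledrappier--Walters argument \cite{LW}, exploiting that in the subshift setting the $1$-cylinders form a generating partition and $\var_n(f)\to0$ for every $f\in C(X)$ (uniform continuity on the compact space $X$), where $\var_n(f)=\sup\{|f(x)-f(x')|:x_i=x_i'\text{ for }1\le i\le n\}$. Writing $S_nf=\sum_{k=0}^{n-1}f\circ\sigma_X^k$, I would first define the candidate function by
$$
P(\sigma_X,\pi,f)(y)=\limsup_{n\to\infty}\frac1n\log\sum_{x\in D_n(y)}\exp\Big(\sup\{S_nf(x'):x'\in\pi^{-1}(y)\cap[x_1\cdots x_n]\}\Big),\qquad y\in Y.
$$
Since $|D_n(y)|$ is at most the number of $n$-blocks of $X$ and $|S_nf|\le n\|f\|_\infty$, this is finite and, divided by $n$, uniformly bounded above, so $P(\sigma_X,\pi,f)$ is bounded above by the topological pressure of $f$. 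For measurability, replacing the fibrewise supremum by the supremum over the whole cylinder $[x_1\cdots x_n]$ alters each summand by a factor $e^{O(\sum_{j\le n}\var_j(f))}=e^{o(n)}$ and hence leaves the $\limsup$ unchanged; after this replacement the $n$-th term depends only on $y_1\cdots y_n$, so it is continuous, and $P(\sigma_X,\pi,f)$, a $\limsup$ of continuous functions, is Borel.

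For the inequality $\le$, take $\mu\in M(X,\sigma_X)$ with $\pi\mu=m$ and disintegrate $\mu=\int_Y\mu_y\,dm(y)$, $\mu_y$ carried by $\pi^{-1}(y)$. With $\alpha$ the (generating) partition of $X$ into $1$-cylinders, the standard relative entropy identities (Abramov--Rokhlin and the relative Kolmogorov--Sinai theorem) give
$$
h_\mu(\sigma_X)-h_m(\sigma_Y)=\lim_{n\to\infty}\frac1n\int_Y H_{\mu_y}\Big(\bigvee_{k=0}^{n-1}\sigma_X^{-k}\alpha\Big)\,dm(y).
$$
For each $y$, the finite Gibbs inequality $\sum_ip_i(a_i-\log p_i)\le\log\sum_ie^{a_i}$, applied with $p$ the distribution of $\mu_y$ on $n$-cylinders and the $a_i$ the corresponding fibrewise suprema of $S_nf$, yields $H_{\mu_y}(\bigvee_{k=0}^{n-1}\sigma_X^{-k}\alpha)+\int S_nf\,d\mu_y\le\log\sum_{x\in D_n(y)}\exp(\sup_{[x_1\cdots x_n]\cap\pi^{-1}(y)}S_nf)$. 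Dividing by $n$, integrating against $m$, letting $n\to\infty$ and using reverse Fatou (the integrands are uniformly bounded above) gives $h_\mu(\sigma_X)-h_m(\sigma_Y)+\int f\,d\mu\le\int P(\sigma_X,\pi,f)\,dm$.

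For $\ge$, for each $n$ I would select, measurably in $y$, one point in each fibre cylinder and form the atomic measure $\lambda_n^y=Z_n(y)^{-1}\sum_{x\in D_n(y)}e^{S_nf(x)}\delta_x$, with $Z_n(y)=\sum_{x\in D_n(y)}e^{S_nf(x)}$; the $e^{o(n)}$ reduction above makes the selection depend on finitely many coordinates of $y$, hence Borel. Put $\lambda_n=\int_Y\lambda_n^y\,dm(y)$, so $\pi\lambda_n$ agrees with $m$ on $n$-cylinders, and let $\mu$ be a weak-$*$ limit point of $\mu_n=\frac1n\sum_{k=0}^{n-1}(\sigma_X^k)_*\lambda_n$; then $\mu\in M(X,\sigma_X)$ and $\pi\mu=m$, since $\pi\lambda_n\to m$ and $m$ is $\sigma_Y$-invariant. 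Note $H_{\lambda_n}(\bigvee_{k=0}^{n-1}\sigma_X^{-k}\alpha\mid\pi^{-1}\mathcal B_Y)+\int S_nf\,d\lambda_n=\int_Y\log Z_n(y)\,dm(y)$ exactly; a relative Misiurewicz-type estimate (concavity of conditional entropy under the average $\frac1n\sum(\sigma_X^k)_*$, refinement of partitions with $\mu$-null boundary, and upper semicontinuity at $\mu$) then bounds $h_\mu(\sigma_X)-h_m(\sigma_Y)+\int f\,d\mu$ below by $\limsup_n\frac1n\int_Y\log Z_n\,dm$. Finally, since an $(n+k)$-block of $X$ over $y$ restricts to an $n$-block over $y_1\cdots y_n$ and a $k$-block over $\sigma_Y^ny$, the sequence $\int_Y\log Z_n\,dm$ is subadditive up to $o(n)$ (using $\sigma_Y$-invariance of $m$), so $\lim_n\frac1n\int_Y\log Z_n\,dm$ exists, and Kingman's subadditive ergodic theorem applied fibrewise over $(Y,\sigma_Y,m)$ identifies it with $\int P(\sigma_X,\pi,f)\,dm$. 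I expect this last stage to be the main obstacle: arranging the fibrewise choices to be Borel in $y$, establishing the relative Misiurewicz inequality for $h_\mu(\sigma_X)-h_{\pi\mu}(\sigma_Y)$ (equivalently, the relevant upper semicontinuity of the relative entropy on $M(X,\sigma_X)$), and reconciling the pointwise $\limsup$ defining $P(\sigma_X,\pi,f)$ with the integrated limit via the subadditive ergodic theorem over the factor.
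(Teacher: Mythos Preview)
The paper does not supply its own proof of this statement: it is quoted verbatim as a special case of the relative variational principle of Ledrappier and Walters \cite{LW} and used as a black box thereafter (the companion formula $P(\sigma_X,\pi,0)(y)=\limsup_n\frac1n\log|D_n(y)|$ is likewise imported from Walters \cite{Wcom}). So there is no in-paper argument to compare against.

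Your sketch is essentially the original Ledrappier--Walters strategy, specialised to subshifts: define $P(\sigma_X,\pi,f)$ via fibrewise partition sums, get the upper bound by disintegration, Abramov--Rokhlin, and the finite Gibbs inequality, and get the lower bound by a relative Misiurewicz construction. The outline is sound and you have correctly identified the genuine technical points (Borel selection, the relative upper-semicontinuity needed for the Misiurewicz step, and reconciling the pointwise $\limsup$ with the integral via subadditivity). Two minor remarks. First, the fibrewise subadditivity you invoke at the end needs to hold pointwise, $\log Z_{n+k}(y)\le\log Z_n(y)+\log Z_k(\sigma_Y^ny)+o(n+k)$, not merely after integration, for Kingman to identify $\lim_n\frac1n\int\log Z_n\,dm$ with $\int P(\sigma_X,\pi,f)\,dm$; this does follow from your restriction observation together with the uniform bound $\sum_{j\le n}\var_j(f)=o(n)$, but is worth stating explicitly (the $f=0$ case is exactly the inequality $|D_{n+m}(y)|\le|D_n(y)|\,|D_m(\sigma_Y^ny)|$ that the paper records later as Lemma~4.2). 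Second, in the lower bound you should check that $\pi\lambda_n\to m$ weak-$*$: you have $\pi\lambda_n=m$ on the algebra of $n$-cylinders, and since these generate, any weak-$*$ limit of $\pi\lambda_n$ agrees with $m$ on a generating algebra, hence equals $m$; combined with $\sigma_Y$-invariance of $m$ this gives $\pi\mu=m$ as you claim.
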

$\mu \in M(X, \sigma_X)$ is a {\em measure of maximal relative
entropy} over $m \in M(Y, \sigma_Y)$, if $\mu$ attains the maximum
in $\sup \{h_{\mu}(\sigma_X):\mu\in M(X,\sigma_X), \pi\mu=m \}.$
The properties of the Borel measurable function $P(\sigma_X, \pi,
f)$ were studied by Walters \cite{Wcom}.
\begin{theorem}\label{aboutP}(Special case of Theorem 4.6. in \cite{Wcom})
Let $(X, \sigma_X), (Y, \sigma_Y)$ be subshifts and
$\pi:(X,\sigma_X) \rightarrow (Y,\sigma_Y)$ a factor map. For each
 $y\in Y,$
$$P(\sigma_X, \pi, 0)(y)=\limsup_{n \rightarrow \infty}\frac{1}{n}\log \vert D_n(y) \vert. $$
\end{theorem}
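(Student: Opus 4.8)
The plan is to unwind the definition of the relative pressure $P(\sigma_X,\pi,0)$ directly in terms of $(n,\epsilon)$-separated subsets of the fibres $\pi^{-1}(y)$ and then to translate the symbolic metric $d$ into combinatorial data about blocks. Recall from \cite{LW} (see also \cite{Wcom}) that the function $P(\sigma_X,\pi,0)$ appearing in the relative variational principle is, at each $y\in Y$, given by
\[
P(\sigma_X,\pi,0)(y)=\lim_{\epsilon\to 0}\ \limsup_{n\to\infty}\frac1n\log s_n\bigl(\pi^{-1}(y),\epsilon\bigr),
\]
where $s_n(\pi^{-1}(y),\epsilon)$ is the largest cardinality of an $(n,\epsilon)$-separated subset of $\pi^{-1}(y)$ with respect to $\sigma_X$ ($(n,\epsilon)$-spanning sets may be used instead and give the same value by the usual comparison). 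So it suffices to compute $s_n(\pi^{-1}(y),\epsilon)$ for small $\epsilon$ and to pass to the limits.

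First I would fix $\epsilon>0$ and choose $m=m(\epsilon)\in\N$ with $2^{-m}\le\epsilon<2^{-(m-1)}$. For the metric $d$ one has $d(x,x')>\epsilon$ precisely when $x$ and $x'$ differ in one of their first $m$ coordinates; hence $\max_{0\le i\le n-1}d(\sigma^i_X x,\sigma^i_X x')>\epsilon$ if and only if $x$ and $x'$ differ in one of their first $N$ coordinates, where $N:=n+m-1$. Consequently a subset of $\pi^{-1}(y)$ is $(n,\epsilon)$-separated exactly when its points have pairwise distinct length-$N$ prefixes. Sending a point to its length-$N$ prefix therefore identifies a maximal $(n,\epsilon)$-separated subset of $\pi^{-1}(y)$ with a maximal family of blocks $i_1\cdots i_N\in B_N(X)$ for which $\pi^{-1}(y)\cap[i_1\cdots i_N]\neq\emptyset$; and picking one point out of each such nonempty cylinder realizes the maximum. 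This is exactly the recipe defining $D_N(y)$, so $s_n(\pi^{-1}(y),\epsilon)=|D_N(y)|=|D_{n+m-1}(y)|$.

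It remains to take the limits. Since $1\le|D_k(y)|\le k_0^{\,k}$, where $k_0$ is the size of the alphabet of $X$, the sequence $\tfrac1k\log|D_k(y)|$ is bounded; and because $m$ is a fixed constant, the substitution $k=n+m-1$ gives
\[
\limsup_{n\to\infty}\frac1n\log|D_{n+m-1}(y)|=\limsup_{k\to\infty}\frac1k\log|D_k(y)|,
\]
which in particular does not depend on $\epsilon$. Letting $\epsilon\to0$ then yields $P(\sigma_X,\pi,0)(y)=\limsup_{n\to\infty}\frac1n\log|D_n(y)|$, as claimed.

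I do not expect a serious obstacle: the argument is bookkeeping relating the scale $\epsilon$ of the symbolic metric to block lengths, plus the observation that counting $(n,\epsilon)$-separated points in a fibre is the same as counting the distinct prefixes of the fibre's points. The only points needing a little care are checking that the fixed index shift $n\mapsto n+m-1$ does not alter the value of $\limsup\tfrac1n\log(\cdot)$ and that the resulting quantity is independent of $\epsilon$, so that the outer limit $\lim_{\epsilon\to0}$ is harmless; boundedness of $\tfrac1n\log|D_n(y)|$ settles this. Alternatively, one may simply invoke Theorem 4.6 of \cite{Wcom}, of which this statement is the quoted special case.
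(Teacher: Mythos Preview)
Your argument is correct and is essentially the standard unwinding of the definition of relative pressure for the constant potential $0$ on a subshift: identifying $(n,\epsilon)$-separated subsets of $\pi^{-1}(y)$ with families of distinct length-$(n+m-1)$ prefixes, hence with $D_{n+m-1}(y)$, and then observing that the fixed index shift does not affect the $\limsup$. There is nothing to compare it to in the present paper, since the paper does not give its own proof of this statement; it is stated as a quoted special case of Theorem~4.6 in \cite{Wcom} and used as a black box thereafter. Your alternative of simply invoking that theorem is exactly what the paper does.
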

It is not easy to calculate $\vert D_n(y)\vert$ in general. Some
other simpler function can be substituted for $P(\sigma_X, \pi,
0)$ in some cases.
\begin{theorem} \label{iPS} (Special case of Corollary in \cite{PS})
Let $(X,\sigma_X)$ be an irreducible shift of finite type, $(Y,
\sigma_Y)$ a subshift and $\pi:(X,\sigma_X) \rightarrow
(Y,\sigma_Y)$ factor map. For each $y\in Y,$
$$P(\sigma_X, \pi, 0)(y)=\limsup_{n\rightarrow\infty}\frac{1}{n}\log \vert
\pi^{-1}[y_1 \cdots y_n]\vert, $$ almost everywhere with respect
to every $\sigma_Y$-invariant Borel probability measure on $Y$.
\end{theorem}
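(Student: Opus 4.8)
The plan is to prove two inequalities. Write $N_n(y):=|\pi^{-1}[y_1\cdots y_n]|$ and $g(y):=\limsup_{n\to\infty}\tfrac1n\log N_n(y)$; by Theorem~\ref{aboutP} the claim is exactly that $g=P(\sigma_X,\pi,0)$ holds $m$-almost everywhere for every $m\in M(Y,\sigma_Y)$. One inequality is free: any length-$n$ initial block of a point of $\pi^{-1}(y)$ is in particular an allowable block of $X$ that maps onto $y_1\cdots y_n$, so $|D_n(y)|\le N_n(y)$ for all $n$ and $y$, and hence $P(\sigma_X,\pi,0)(y)\le g(y)$ for every $y$. Since $0\le\tfrac1n\log N_n\le\log|\mathcal{A}|$, where $\mathcal{A}$ is the alphabet of $X$, the function $g-P(\sigma_X,\pi,0)$ is bounded, measurable and nonnegative, so it will be enough to prove the reverse integrated inequality $\int g\,dm\le\int P(\sigma_X,\pi,0)\,dm$ for every $m$; as both sides are integrals of fixed functions, the ergodic decomposition reduces this to the case of ergodic $m$.

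The next step is to rewrite both sides as limits of block averages. Restricting an allowable block of length $n+k$ over $y_1\cdots y_{n+k}$ to its first $n$ and its last $k$ symbols is injective, so $N_{n+k}(y)\le N_n(y)\,N_k(\sigma_Y^{n}y)$, and the same restriction applied to initial blocks of points of $\pi^{-1}(y)$ gives $|D_{n+k}(y)|\le|D_n(y)|\,|D_k(\sigma_Y^{n}y)|$. Thus $\{\log N_n\}$ and $\{\log|D_n|\}$ are bounded nonnegative subadditive potentials on $(Y,\sigma_Y)$, and Kingman's subadditive ergodic theorem gives, for ergodic $m$, that $\tfrac1n\log N_n\to\beta:=\inf_n\tfrac1n\int\log N_n\,dm$ a.e., so that $g=\beta$ a.e.\ and $\int g\,dm=\beta$; likewise $\tfrac1n\log|D_n|$ converges a.e., so by Theorem~\ref{aboutP} one has $\int P(\sigma_X,\pi,0)\,dm=\inf_n\tfrac1n\int\log|D_n|\,dm$ for every $m\in M(Y,\sigma_Y)$. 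Moreover $y\mapsto|D_n(y)|$ is upper semicontinuous --- it counts the allowable $X$-blocks over $y_1\cdots y_n$ whose last symbol begins a point of $\pi^{-1}(\sigma_Y^{n-1}y)$, and the set of such admissible symbols depends upper semicontinuously on $y$ by a compactness argument --- so $m\mapsto\int P(\sigma_X,\pi,0)\,dm$ is an infimum of upper semicontinuous functionals on $M(Y,\sigma_Y)$ and is therefore upper semicontinuous. (As a consistency check: since $\log N_n$ is constant on $n$-cylinders, $P(\sigma_Y,\{\log N_n\})=\lim_n\tfrac1n\log\sum_{w\in B_n(Y)}N_w=\lim_n\tfrac1n\log|B_n(X)|=h_{\mathrm{top}}(\sigma_X)$, which by Theorem~\ref{CP} and the relative variational principle also equals $P(\sigma_Y,\{\log|D_n|\})=\sup_{\nu}\{h_\nu(\sigma_Y)+\int P(\sigma_X,\pi,0)\,d\nu\}=\sup_\mu h_\mu(\sigma_X)=h_{\mathrm{top}}(\sigma_X)$.)

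The heart of the matter is to construct, for ergodic $m$ and each $\varepsilon>0$, measures $\mu_n\in M(X,\sigma_X)$ with $\pi\mu_n\to m$ and $h_{\mu_n}(\sigma_X)\ge h_m(\sigma_Y)+\beta-2\varepsilon$ for all large $n$; this is where irreducibility of the shift of finite type $X$ will be used essentially. After recoding so that $X$ is a vertex shift --- which only rescales $n$ and changes none of the quantities above --- fix a large $n$; combining the convergence $\tfrac1n\log N_n\to\beta$ with the Shannon--McMillan--Breiman theorem and Egorov's theorem produces a family $\mathcal{W}_n$ of $m$-typical $Y$-blocks of length $n$ with $|\mathcal{W}_n|\ge e^{n(h_m(\sigma_Y)-\varepsilon)}$, each of which is the image of at least $e^{n(\beta-\varepsilon)}$ allowable $X$-blocks. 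Because $X$ is irreducible, the terminal symbol of any such $X$-block can be joined to the initial symbol of any other by a connecting word of bounded length; let $\mu_n$ be a $\sigma_X$-invariant measure equidistributed over the concatenations $v^{(1)}u^{(1)}v^{(2)}u^{(2)}\cdots$ in which each $v^{(i)}$ is an $X$-block mapping onto some $w^{(i)}\in\mathcal{W}_n$ and each $u^{(i)}$ is a connecting word. A standard Katok--Bowen type entropy estimate then yields the displayed lower bound on $h_{\mu_n}(\sigma_X)$, while the connecting words occupy a fraction of coordinates tending to $0$, so $\pi\mu_n\to m$; since the entropy map of the subshift $(Y,\sigma_Y)$ is upper semicontinuous, $\limsup_n h_{\pi\mu_n}(\sigma_Y)\le h_m(\sigma_Y)$, and therefore $\liminf_n\bigl(h_{\mu_n}(\sigma_X)-h_{\pi\mu_n}(\sigma_Y)\bigr)\ge\beta-2\varepsilon$. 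Applying the relative variational principle to the measure $\pi\mu_n$, over which $\mu_n$ lies, gives $\int P(\sigma_X,\pi,0)\,d(\pi\mu_n)\ge h_{\mu_n}(\sigma_X)-h_{\pi\mu_n}(\sigma_Y)$; letting $n\to\infty$ and invoking the upper semicontinuity of $m'\mapsto\int P(\sigma_X,\pi,0)\,dm'$ established above yields $\int P(\sigma_X,\pi,0)\,dm\ge\beta-2\varepsilon$, and $\varepsilon\downarrow0$ completes the argument.

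The step I expect to be the main obstacle is this last construction of the $\mu_n$: one has to run the entropy estimate carefully while allowing the connecting words to have variable (but bounded) lengths, check that $\pi\mu_n$ genuinely converges to $m$, and attend to the preliminary recoding of $X$ to a vertex shift as well as the passage from ergodic to general $m$. The remaining ingredients --- subadditivity, Kingman's theorem, and the two upper semicontinuity statements --- are routine.
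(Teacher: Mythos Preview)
The paper does not prove Theorem~\ref{iPS}: it is quoted without argument as a special case of a corollary in Petersen--Shin~\cite{PS} and used throughout as a black box. There is therefore no in-paper proof to compare your attempt against.

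That said, your route is a legitimate self-contained alternative, and it is essentially correct once the horseshoe construction is written out carefully. The easy inequality $|D_n|\le N_n$, the subadditivity of both sequences (the $|D_n|$ case is exactly Lemma~\ref{suba} here), Kingman's theorem, and the identification $\int P(\sigma_X,\pi,0)\,dm'=\inf_n\tfrac1n\int\log|D_n|\,dm'$ for every invariant $m'$ are all correct. Your upper semicontinuity claim for $y\mapsto|D_n(y)|$ is also right, though note that your description (``last symbol begins a point of $\pi^{-1}(\sigma_Y^{n-1}y)$'') is valid only after the recoding to a one-step shift that you announce. In the construction of $\mu_n$, two small points deserve care: the entropy bound you actually get is $h_{\mu_n}(\sigma_X)\ge\tfrac{n}{n+M}(h_m(\sigma_Y)+\beta-2\varepsilon)$ rather than the clean inequality you wrote, so absorb the $O(M/n)$ loss into $\varepsilon$; and the convergence $\pi\mu_n\to m$ holds because every block $w\in\mathcal W_n$ already carries the correct short-block frequencies, so the limit is $m$ irrespective of how unequally the various $w$'s are weighted by the number of $X$-preimages. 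Your own list of remaining technicalities (phase-averaging for invariance, variable connector lengths, the recoding) is accurate, and none needs a new idea.

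For orientation, the approach in~\cite{PS} is more direct and pointwise: it compares $|D_n(y)|$ with $N_n(y)$ at the level of block counts rather than passing through approximating measures, the relative variational principle, and upper semicontinuity. Your argument is longer but has the virtue of using only tools already assembled in the present paper.
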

\section{Existence of A Borel Measurable Saturated Compensation Function}\label{main1}

We first study the existence of a bounded Borel measurable saturated
compensation function for a factor map between subshifts.

\begin{lemma}\label{key1}
Let $(X,\sigma_X), (Y, \sigma_Y)$ be subshifts and $\pi:(X,
\sigma_X) \rightarrow (Y, \sigma_Y)$ a factor map. For $m\in Erg(Y, \sigma_Y)$, there exists $\mu\in M(X,
\sigma_X)$ such that $\pi \mu=m$ and
$h_{\mu}(\sigma_X)=\sup \{h_{\bar \mu}(\sigma_X) : \bar\mu\in
M(X,\sigma_X), \pi \bar\mu=m\}.$
\end{lemma}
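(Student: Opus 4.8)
The plan is to produce the maximizing measure $\mu$ as a weak-$*$ limit of measures of maximal relative entropy over approximations of $m$, using upper semicontinuity of the entropy in a form adapted to the fiber structure. First I would recall that the supremum $s := \sup\{h_{\bar\mu}(\sigma_X) : \bar\mu \in M(X,\sigma_X),\ \pi\bar\mu = m\}$ equals $\int P(\sigma_X,\pi,0)\, dm + h_m(\sigma_Y)$ by the Ledrappier--Walters relative variational principle (applied with $f = 0$), so in particular $s < \infty$. Pick a sequence $\mu_k \in M(X,\sigma_X)$ with $\pi\mu_k = m$ and $h_{\mu_k}(\sigma_X) \to s$. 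By weak-$*$ compactness of $M(X,\sigma_X)$ (since $X$ is a subshift, hence a compact metric space), pass to a subsequence with $\mu_k \to \mu$ weak-$*$; then $\mu \in M(X,\sigma_X)$ and, since $\pi$ is continuous, $\pi\mu = \lim \pi\mu_k = m$. It remains to show $h_\mu(\sigma_X) \ge s$, i.e.\ that the entropy does not drop in the limit along this particular sequence.

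The key step is the entropy estimate, and this is where the ergodicity of $m$ is used. The map $\nu \mapsto h_\nu(\sigma_X)$ on $M(X,\sigma_X)$ is upper semicontinuous (subshifts are expansive), so outright $h_\mu(\sigma_X) \ge \limsup h_{\mu_k}(\sigma_X) = s$ would suffice --- but upper semicontinuity gives the inequality in the wrong direction. The correct tool is the affine decomposition of entropy over the factor: writing $h_\nu(\sigma_X) = h_{\pi\nu}(\sigma_Y) + h_\nu(\sigma_X \mid \sigma_Y)$ (the relative entropy over the factor $\sigma$-algebra $\pi^{-1}\mathcal{B}(Y)$), since $\pi\mu_k = \pi\mu = m$ for all $k$ the problem reduces to lower semicontinuity of the relative entropy term along the sequence. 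Here I would invoke the variational characterization of the relative entropy: for $\nu$ with $\pi\nu = m$, $h_\nu(\sigma_X \mid \sigma_Y) = \int P(\sigma_X,\pi,0)\, dm$ can be bounded below by partition-sum expressions, and the point is that the relative pressure function $P(\sigma_X,\pi,0)$ is $m$-a.e.\ constant when $m$ is ergodic (being a $\sigma_Y$-invariant measurable function by Theorem~4 of \cite{LW}). Concretely: for any finite partition $\xi$ of $X$ into cylinders, $\limsup_k H_{\mu_k}(\bigvee_{i=0}^{n-1}\sigma_X^{-i}\xi \mid \pi^{-1}\mathcal{B}(Y))$ controls $h_{\mu_k}(\sigma_X \mid \sigma_Y)$ up to the usual $(1/n)$ errors, and one extracts a lower bound for $h_\mu$ by a standard but careful diagonal argument over $\xi$ and $n$.

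The cleanest route, which I would actually carry out, is to bypass weak-$*$ limits entirely: by Theorem~4.6 of \cite{Wcom} (Theorem~\ref{aboutP} above) and Theorem~\ref{iPS} or directly from \cite{LW}, the supremum $s = h_m(\sigma_Y) + \int P(\sigma_X,\pi,0)\, dm$ is \emph{attained} --- this is exactly the existence of a measure of maximal relative entropy over $m$. For ergodic $m$ one can construct such a measure fiberwise: the relative version of the variational principle of Ledrappier and Walters guarantees, for ergodic $m$, a $\sigma_X$-invariant $\mu$ with $\pi\mu = m$ realizing the relative pressure integral, because the relevant conditional measures on fibers can be chosen measurably (by a measurable selection / disintegration argument) and assembled into an invariant measure. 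I expect the main obstacle to be precisely this last assembly step --- verifying that the fiberwise-optimal conditional measures glue to a genuinely $\sigma_X$-invariant Borel probability measure on $X$ with the right entropy --- and I would handle it by the now-standard technique of taking a generic point of $m$, building empirical measures supported on long fiber-blocks in $E_n(y)$ that nearly achieve $\log\lvert D_n(y)\rvert$ (using Theorem~\ref{aboutP}), averaging, and passing to a weak-$*$ limit, with upper semicontinuity of $h_\mu(\sigma_X \mid \sigma_Y)$ in $\mu$ (for fixed image $m$) giving the needed inequality in the limit.
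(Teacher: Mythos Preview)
Your first approach is actually correct, and you abandoned it by mistake. Upper semicontinuity of the entropy map $\nu \mapsto h_\nu(\sigma_X)$ (which holds because subshifts are expansive) says precisely that if $\mu_k \to \mu$ weak-$*$ then $\limsup_k h_{\mu_k}(\sigma_X) \le h_\mu(\sigma_X)$; this is exactly the inequality $h_\mu(\sigma_X) \ge s$ that you wrote down and said you needed, not the ``wrong direction.'' Combined with $\pi\mu = m$ (so $h_\mu(\sigma_X) \le s$ by definition of $s$), the argument is complete in three lines, and ergodicity of $m$ is never used: the fiber $\{\bar\mu \in M(X,\sigma_X) : \pi\bar\mu = m\}$ is nonempty, weak-$*$ closed in the compact set $M(X,\sigma_X)$, and an upper semicontinuous function attains its supremum on a compact set. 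Having dismissed this, your subsequent discussion drifts: the relative-entropy decomposition paragraph never closes the argument, and the sentence ``the supremum \ldots\ is \emph{attained} --- this is exactly the existence of a measure of maximal relative entropy over $m$'' is circular, since that existence is the content of the lemma. The final generic-point/empirical-measure sketch is a legitimate strategy but is left vague exactly at the step you flag as the obstacle.

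For comparison, the paper's proof is entirely different and rests on two citations rather than compactness. It invokes Corollary~3.2 of Walters~\cite{Wcom}, which for ergodic $m$ produces $\mu \in M(X,\sigma_X)$ with $\pi\mu = m$ that is an equilibrium state of $\phi\circ\pi$ for some $\phi \in C(Y)$; then Proposition~6.1.1 of Shin~\cite{S2} says any equilibrium state of a function of the form $\phi\circ\pi$ is automatically a measure of maximal relative entropy over its image. That route genuinely uses ergodicity of $m$ (through the Walters corollary) and imports more machinery; your corrected direct argument is more elementary and actually proves the stronger statement for arbitrary $m \in M(Y,\sigma_Y)$.
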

 \begin{proof} By Corollary 3.2 \cite{Wcom}, for an  $m\in Erg(Y, \sigma_Y)$, there exists $\mu \in M(X,\sigma_X)$ such that $\pi \mu=m$ and $\mu$ is an equilibrium state of $\phi\circ \pi$ for some $\phi\in C(Y).$  By Proposition 6.1.1.\cite{S2}, such  $\mu$ is a measure of maximal relative entropy over $\pi\mu(=m).$
\end{proof}

\begin{theorem}\label{thm1}
Let $(X,\sigma_X), (Y, \sigma_Y)$ be subshifts and $\pi:(X,
\sigma_X) \rightarrow (Y, \sigma_Y)$ a factor map. Define
$F:Y\rightarrow \R$ by $F(y)=P(\sigma_X, \pi, 0)(y).$ Then
$F:Y\rightarrow \R$ is a bounded Borel measurable function and
$-F\circ\pi$ is a compensation function for
$(\sigma_X,\sigma_Y,\pi)$, i.e.,
\begin{align}
&\sup_{\mu \in M(X, \sigma_X)}\{h_{\mu}(\sigma_X)+\int
(-F\circ\pi+\phi\circ\pi)d \mu\} \label{gsumu}\\&=\sup_{m\in
M(Y,\sigma_Y)}\{h_{m}(\sigma_Y)+\int \phi dm\} \label{gsumd}
\end{align}
for all $\phi\in C(Y)$. Then $ M_{-F \circ \pi+\phi\circ\pi}(X,
\sigma_X)$ is nonempty. $\mu $ is an equilibrium state for $-F
\circ \pi+\phi\circ\pi$ if and only if (i) $\pi\mu$ is an
equilibrium state for $\phi$ and (ii) $\mu$ is a measure of maximal
relative entropy over $\pi\mu.$
\end{theorem}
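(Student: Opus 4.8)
The plan is to build everything on the Ledrappier--Walters relative variational principle together with Walters' identification (Theorem~\ref{aboutP}) of $P(\sigma_X,\pi,0)$ as a $\limsup$ of $(1/n)\log|D_n(y)|$. First I would check that $F=P(\sigma_X,\pi,0)$ is bounded and Borel measurable. Boundedness is immediate: $1\le |D_n(y)|\le |B_n(X)|\le k^n$ where $k$ is the alphabet size of $X$, so $0\le F(y)\le \log k$ for all $y$. Measurability follows because each $|D_n(y)|$, as a function of $y$, depends only on the first $n$ coordinates of $y$ (by the remark on page~\pageref{En} that the relevant cardinalities depend only on $y_1\cdots y_n$), hence is locally constant and in particular Borel; $F$ is then a $\limsup$ of Borel functions and so Borel.

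Next comes the compensation-function identity \eqref{gsumu}=\eqref{gsumd}. Fix $\phi\in C(Y)$. For the $\le$ direction, take any $\mu\in M(X,\sigma_X)$ and set $m=\pi\mu\in M(Y,\sigma_Y)$. Decompose the integral: $h_\mu(\sigma_X)+\int(-F\circ\pi+\phi\circ\pi)\,d\mu = \bigl(h_\mu(\sigma_X)-h_m(\sigma_Y)-\int F\,dm\bigr)+\bigl(h_m(\sigma_Y)+\int\phi\,dm\bigr)$. By the relative variational principle, $h_\mu(\sigma_X)-h_m(\sigma_Y)\le \int P(\sigma_X,\pi,0)\,dm=\int F\,dm$, so the first bracket is $\le 0$, giving \eqref{gsumu}$\le$\eqref{gsumd}. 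For $\ge$, it suffices to approximate: given $m\in M(Y,\sigma_Y)$ and $\varepsilon>0$, the relative variational principle furnishes $\mu\in M(X,\sigma_X)$ with $\pi\mu=m$ and $h_\mu(\sigma_X)-h_m(\sigma_Y)\ge\int F\,dm-\varepsilon$; then $h_\mu(\sigma_X)+\int(-F\circ\pi+\phi\circ\pi)\,d\mu\ge h_m(\sigma_Y)+\int\phi\,dm-\varepsilon$. Taking the supremum over $m$ and letting $\varepsilon\to 0$ yields \eqref{gsumu}$\ge$\eqref{gsumd}. (One must check the relative pressure integral is finite, which is where boundedness of $F$ is used.)

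For the characterization of equilibrium states, first note $M_{-F\circ\pi+\phi\circ\pi}(X,\sigma_X)\ne\emptyset$: pick an ergodic $m$ that is an equilibrium state for $\phi$ on $(Y,\sigma_Y)$ (these exist by the variational principle for continuous functions), and apply Lemma~\ref{key1} to get $\mu\in M(X,\sigma_X)$ with $\pi\mu=m$ realizing the maximal relative entropy over $m$; the computation above, now with equality in both brackets, shows $\mu$ attains \eqref{gsumu}. For the equivalence, suppose $\mu$ is an equilibrium state for $-F\circ\pi+\phi\circ\pi$, and again write, with $m=\pi\mu$,
\[
h_\mu(\sigma_X)+\int(-F\circ\pi+\phi\circ\pi)\,d\mu=\Bigl(h_\mu(\sigma_X)-h_m(\sigma_Y)-\int F\,dm\Bigr)+\Bigl(h_m(\sigma_Y)+\int\phi\,dm\Bigr).
\]
Both brackets are $\le 0$ and $\le \sup_{m'}\{h_{m'}(\sigma_Y)+\int\phi\,dm'\}$ respectively, while their sum equals the joint supremum \eqref{gsumd}; hence both must be extremal, i.e.\ the second bracket forces $m=\pi\mu$ to be an equilibrium state for $\phi$, and the first bracket forces $h_\mu(\sigma_X)-h_m(\sigma_Y)=\int F\,dm=\int P(\sigma_X,\pi,0)\,dm$, which by the relative variational principle says $\mu$ is a measure of maximal relative entropy over $m$. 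Conversely, if (i) and (ii) hold, running the same decomposition backwards gives that $\mu$ attains \eqref{gsumu}.

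The main obstacle I anticipate is the $\ge$ direction of the compensation identity and, relatedly, ensuring the relative-pressure quantities are everywhere finite and that the supremum over $\mu$ with $\pi\mu=m$ in the relative variational principle is actually attained or well-approximated --- i.e.\ carefully invoking Ledrappier--Walters and Lemma~\ref{key1} to produce the measure $\mu$ over a given $m$ with the right relative entropy. The measurability and boundedness of $F$, and the algebraic splitting of the entropy-plus-integral functional, are routine once that is in hand.
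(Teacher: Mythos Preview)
Your proposal is correct and follows essentially the same strategy as the paper: the relative variational principle gives the key decomposition $h_\mu-\int F\circ\pi\,d\mu+\int\phi\circ\pi\,d\mu=(h_\mu-h_{\pi\mu}-\int F\,d\pi\mu)+(h_{\pi\mu}+\int\phi\,d\pi\mu)$, from which both the compensation identity and the characterization of equilibrium states fall out exactly as you describe, and nonemptiness is obtained via Lemma~\ref{key1} just as in the paper.

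One slip to flag: your measurability argument claims $|D_n(y)|$ depends only on $y_1\cdots y_n$, but the remark you cite concerns $|E_n(y)|$, not $|D_n(y)|$; indeed Example~\ref{subshift2} shows $y\mapsto|D_n(y)|$ can be discontinuous. This does not matter for the theorem, since Borel measurability of $P(\sigma_X,\pi,0)$ is already part of the Ledrappier--Walters statement you invoke, so no separate argument is needed. A minor difference in the $\ge$ direction: you use an $\varepsilon$-approximation over arbitrary $m\in M(Y,\sigma_Y)$, whereas the paper restricts to ergodic $m$ and invokes Lemma~\ref{key1} to realize the relative supremum exactly; your route is slightly more direct here, while the paper's use of Lemma~\ref{key1} at this stage is what later makes the nonemptiness argument immediate.
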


\begin{proof}[Proof of equality (\ref{gsumd}) in Theorem \ref{thm1} ]
We first show that
\begin{equation}
\label{E1}
\sup_{\mu \in M(X,
\sigma_X)}\{h_{\mu}(\sigma_X)+\int (-F\circ\pi+\phi\circ\pi)d
\mu\}\leq \sup_{m\in M(Y, \sigma_Y)}\{h_{m}(\sigma_Y)+\int \phi
dm\} \textnormal{ for all }\phi\in C(Y).
\end{equation}
By the relative variational principle, there exists a Borel
measurable function $P(\sigma_{X}, \pi, 0):Y\rightarrow \R$  such
that for each $m\in M(Y, \sigma_Y)$,
$$\int P(\sigma_{X}, \pi, 0)dm =\sup \{h_{\mu}(\sigma_{X})-h_{m}(\sigma_{Y}): \mu \in M(X, \sigma_X) \text{ and } \pi \mu=m\}.$$
For $\mu\in M(X, \sigma_X),$
\begin{equation*}
\begin{split}
& h_{\mu}(\sigma_X)-\int
P(\sigma_{X}, \pi, 0)\circ\pi d\mu+\int \phi\circ\pi d \mu \\&=
h_{\mu}(\sigma_X)-\sup
\{h_{\tilde{\mu}}(\sigma_{X})-h_{\pi \mu}(\sigma_{Y}):
\tilde{\mu}\in M(X,\sigma_X), \pi {\tilde{\mu}}=\pi \mu\}+ \int
\phi \circ\pi d \mu \\&=
h_{\mu}(\sigma_X)-\sup \{h_{\tilde{\mu}}(\sigma_{X}):
\tilde {\mu}\in M(X, \sigma_X), \pi {\tilde{\mu}}=\pi \mu
\}+h_{\pi \mu}(\sigma_{Y})+ \int \phi \circ\pi d \mu \\& \leq
h_{\pi\mu}(\sigma_Y)+\int \phi  d\pi\mu \leq \sup_{m\in M(Y, \sigma_Y)}\{h_m(\sigma_Y)+\int\phi dm\}
\end{split}
\end{equation*}
where the last inequality follows from  $h_{\mu}(\sigma_X)\leq
\sup\{h_{\tilde{\mu}}(\sigma_{X}): \tilde{\mu}\in M(X,
\sigma_X),\pi {\tilde{\mu}}=\pi \mu\}.$ Taking the supremum over all $\mu\in M(X, \sigma_X)$, we obtain inequality (\ref{E1}).

For the reverse inequality, we use Lemma \ref{key1}. For
$m \in Erg(Y, \sigma_Y)$,  we
can find $\tilde{\mu}\in M(X, \sigma_X)$ such that $\pi
\tilde{\mu}=m$ and
$h_{\tilde{\mu}}(\sigma_X)=\sup\{h_{\bar{\mu}}(\sigma_X):
\bar{\mu}\in M(X, \sigma_X),\pi\bar{\mu}=m \}$. Hence, for $m \in Erg(Y, \sigma_Y)$,
\begin{equation*}
\begin{split}
&h_{m}(\sigma_Y)+\int\phi dm \\&=
h_{\tilde{\mu}}(\sigma_X)-\sup \{h_{\bar{\mu}}(\sigma_X): \bar
{\mu}\in M(X, \sigma_X),\pi {\bar{\mu}}=\pi \tilde {\mu}
\}+h_{\pi\tilde{\mu}}(\sigma_Y)+\int\phi \circ \pi d{\tilde{\mu}}
\\& \leq \sup_{ \mu \in M(X, \sigma_X)}\{h_{\mu}(\sigma_X)-\sup \{h_{\bar{\mu}}(\sigma_{X}): \bar{\mu}\in M(X, \sigma_X), \pi {\bar{\mu}}=\pi \mu \}+h_{\pi \mu}(\sigma_{Y})+ \int \phi\circ\pi d \mu\}\\
&=\sup_{ \mu \in M(X, \sigma_X)}\{h_{\mu}(\sigma_X)-\sup
\{h_{\bar{\mu}}(\sigma_{X})-h_{\pi \mu}(\sigma_{Y}) : \bar
{\mu}\in M(X, \sigma_X), \pi {\bar{\mu}}=\pi \mu \}+ \int
\phi\circ\pi d \mu\}\\& =\sup_{ \mu \in M(X,
\sigma_X)}\{h_{\mu}(\sigma_X)-\int P(\sigma_{X}, \pi,0)\circ \pi
d\mu + \int \phi\circ\pi d \mu\}.
\end{split}
\end{equation*}
Taking the supremum over all $m\in Erg(Y, \sigma_Y),$ we obtain the reverse inequality.
Thus, equality (\ref{gsumd}) holds. Theorem \ref{aboutP} implies that $P(\sigma_X, \pi,0)$ is bounded by $\log S$, where $S$ is the number of symbols in $X$.
\end{proof}
The rest of Theorem \ref{thm1} follows from the following three lemmas. In the lemmas, we assume that $(X,\sigma_X)$ and  $(Y, \sigma_Y)$ are
subshifts, $\pi:(X, \sigma_X) \rightarrow (Y, \sigma_Y)$ is a factor map and $F(y)=P(\sigma_X, \pi, 0)(y)$ for each $y\in Y$.
\begin{lemma}\label{c1}
Suppose ${m}$ is an ergodic equilibrium state
for $\phi\in C(Y).$ Let ${\mu}$ be a measure of maximal
relative entropy over ${m}$. Then ${\mu}$ attains the
maximum in (\ref{gsumu}). Therefore $M_{-F\circ
\pi+\phi\circ\pi}(X,\sigma_X)$ is nonempty.
\end{lemma}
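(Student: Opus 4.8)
The plan is to evaluate the functional in (\ref{gsumu}) at the measure $\mu$ explicitly and recognize the result as the right-hand side of (\ref{gsumd}), which has already been established in the first part of the proof of Theorem \ref{thm1}.

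The key point is the identity $\int F\circ\pi\,d\mu = h_{\mu}(\sigma_X)-h_{m}(\sigma_Y)$, which I would extract from the relative variational principle. Since $\mu$ is a measure of maximal relative entropy over $m$, by definition $\pi\mu=m$ and $h_{\mu}(\sigma_X)=\sup\{h_{\bar\mu}(\sigma_X):\bar\mu\in M(X,\sigma_X),\ \pi\bar\mu=m\}$ (such $\mu$ exists because $m$ is ergodic, by Lemma \ref{key1}). As $h_{m}(\sigma_Y)$ does not depend on the choice of preimage of $m$,
\[
h_{\mu}(\sigma_X)-h_{m}(\sigma_Y)=\sup\{h_{\bar\mu}(\sigma_X)-h_{m}(\sigma_Y):\bar\mu\in M(X,\sigma_X),\ \pi\bar\mu=m\}=\int P(\sigma_X,\pi,0)\,dm,
\]
the last equality being the relative variational principle, and since $F=P(\sigma_X,\pi,0)$ and $\pi\mu=m$ the left integral equals $\int F\circ\pi\,d\mu$. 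Substituting this together with $\int\phi\circ\pi\,d\mu=\int\phi\,dm$ gives
\[
h_{\mu}(\sigma_X)+\int(-F\circ\pi+\phi\circ\pi)\,d\mu = h_{\mu}(\sigma_X)-\bigl(h_{\mu}(\sigma_X)-h_{m}(\sigma_Y)\bigr)+\int\phi\,dm = h_{m}(\sigma_Y)+\int\phi\,dm.
\]
Because $m$ is an equilibrium state for $\phi$, the right-hand side equals $\sup_{m'\in M(Y,\sigma_Y)}\{h_{m'}(\sigma_Y)+\int\phi\,dm'\}$, which by equality (\ref{gsumd}) of Theorem \ref{thm1} is precisely the quantity in (\ref{gsumu}). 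Hence $\mu$ attains the maximum in (\ref{gsumu}); equivalently, $\mu\in M_{-F\circ\pi+\phi\circ\pi}(X,\sigma_X)$.

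For the final assertion I would note that every $\phi\in C(Y)$ admits an ergodic equilibrium state: $(Y,\sigma_Y)$ is expansive, so the entropy map $m\mapsto h_{m}(\sigma_Y)$ is upper semicontinuous and $\phi$ has an equilibrium state, and by ergodic decomposition one may take it ergodic. Feeding such an $m$ into Lemma \ref{key1} and then into the computation above produces a measure $\mu\in M_{-F\circ\pi+\phi\circ\pi}(X,\sigma_X)$, so this set is nonempty. The only step needing care is the displayed identity — that a measure of maximal relative entropy over the \emph{fixed} ergodic $m$ genuinely realizes $\int P(\sigma_X,\pi,0)\,dm$; this is where ergodicity of $m$ (for the existence supplied by Lemma \ref{key1}) and the boundedness of $F$ (Theorem \ref{aboutP}, guaranteeing all terms are finite) are used. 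Everything else is a direct substitution once (\ref{gsumd}) is in hand.
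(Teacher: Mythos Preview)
Your proof is correct and follows essentially the same route as the paper: use the relative variational principle to rewrite $\int F\circ\pi\,d\mu$ as $h_{\mu}(\sigma_X)-h_{\pi\mu}(\sigma_Y)$, cancel using the maximal relative entropy property of $\mu$, and then invoke that $m=\pi\mu$ is an equilibrium state for $\phi$ together with equality~(\ref{gsumd}). The only addition is your explicit justification that an ergodic equilibrium state for $\phi$ exists, which the paper leaves implicit.
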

\begin{proof}
If ${m}$ is an ergodic equilibrium state for
$\phi\in C(Y),$ then there exists a measure ${\mu}$ of maximal
relative entropy over ${m} $ (by Lemma \ref{key1}).
\begin{equation*}
\begin{split}
&h_{{\mu}}(\sigma_X)-\int P(\sigma_X, \pi, 0)\circ \pi d{{\mu}}+\int\phi\circ \pi d{{\mu}}\\
&=h_{{\mu}}(\sigma_X)-\sup \{h_{\bar {\mu}}(\sigma_X)-h_{\pi
{\mu}}(\sigma_Y) :\bar\mu\in M(X,\sigma_X), \pi {\bar\mu}=\pi
{\mu}\}+ \int\phi\circ\pi
d{{\mu}}\\&=h_{{\mu}}(\sigma_X)-\sup \{h_{\bar\mu}(\sigma_X):
\bar \mu\in M(X, \sigma_X), \pi {\bar \mu}=\pi {\mu}\}+h_{\pi
{\mu}}(\sigma_Y) +\int\phi\circ\pi d{{\mu}}\\&=h_{\pi
{\mu}}(\sigma_Y) +\int\phi d{{\pi\mu}} =\sup_{\nu\in
M(Y, \sigma_Y)}\{h_{\nu}(\sigma_Y)+\int \phi d\nu\}.
\end{split}
\end{equation*}
\end{proof}
\begin{lemma}\label{c1.5}
Suppose $m$ is an equilibrium state for  $\phi\in C(Y)$ and $\mu$
is a measure of maximal relative entropy over $m$. Then $\mu$ is
an equilibrium state for $-F\circ\pi+\phi\circ\pi.$
\end{lemma}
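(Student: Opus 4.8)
The plan is to give a short direct computation, essentially the same chain of equalities as in the proof of Lemma~\ref{c1} but without invoking ergodicity, and then to read off the conclusion from the equality (\ref{gsumd}) already proved in Theorem~\ref{thm1}. The point is that the two hypotheses (on $m$ and on $\mu$) together evaluate every term in $h_\mu(\sigma_X)+\int(-F\circ\pi+\phi\circ\pi)\,d\mu$ explicitly.

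First I would record the key identity
\[
\int F\circ\pi\,d\mu \;=\; h_\mu(\sigma_X)-h_m(\sigma_Y).
\]
Indeed, since $F=P(\sigma_X,\pi,0)$ is a function on $Y$ and $\pi\mu=m$ (which is part of the definition of a measure of maximal relative entropy over $m$), the change-of-variables formula gives $\int F\circ\pi\,d\mu=\int F\,dm$. Applying the Ledrappier--Walters relative variational principle with $f=0$ to $m$ yields $\int F\,dm=\sup\{h_{\bar\mu}(\sigma_X)-h_m(\sigma_Y):\bar\mu\in M(X,\sigma_X),\ \pi\bar\mu=m\}$; since $h_m(\sigma_Y)$ is a finite constant (as $Y$ is a subshift on finitely many symbols) and the set of $\bar\mu$ with $\pi\bar\mu=m$ is nonempty (it contains $\mu$), this equals $\sup\{h_{\bar\mu}(\sigma_X):\pi\bar\mu=m\}-h_m(\sigma_Y)$. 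Finally, the assumption that $\mu$ is a measure of maximal relative entropy over $m$ says exactly that $\mu$ attains this supremum, i.e. $\sup\{h_{\bar\mu}(\sigma_X):\pi\bar\mu=m\}=h_\mu(\sigma_X)$, whence the identity. (Here $F$ is bounded, hence $\mu$-integrable, by the estimate $0\le F\le\log S$ noted in the proof of (\ref{gsumd}).)

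With this identity the rest is bookkeeping:
\[
h_\mu(\sigma_X)+\int(-F\circ\pi+\phi\circ\pi)\,d\mu
= h_\mu(\sigma_X)-\bigl(h_\mu(\sigma_X)-h_m(\sigma_Y)\bigr)+\int\phi\,dm
= h_m(\sigma_Y)+\int\phi\,dm,
\]
where I also used $\int\phi\circ\pi\,d\mu=\int\phi\,d(\pi\mu)=\int\phi\,dm$. Since $m$ is an equilibrium state for $\phi$, the right-hand side equals $\sup_{m'\in M(Y,\sigma_Y)}\{h_{m'}(\sigma_Y)+\int\phi\,dm'\}$, and by equality (\ref{gsumd}) of Theorem~\ref{thm1} this in turn equals $\sup_{\mu'\in M(X,\sigma_X)}\{h_{\mu'}(\sigma_X)+\int(-F\circ\pi+\phi\circ\pi)\,d\mu'\}$. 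Hence $\mu$ attains the supremum in (\ref{gsumu}), which is precisely the statement that $\mu$ is an equilibrium state for $-F\circ\pi+\phi\circ\pi$.

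I do not expect a genuine obstacle here: all entropies and integrals in sight are finite because $X$ and $Y$ are subshifts on finitely many symbols and $F$ is bounded, so the rearrangements are legitimate, and the only external inputs are the relative variational principle and the already-established equality (\ref{gsumd}). The one thing worth emphasizing is that the defining property of a measure of maximal relative entropy is exactly what pins down $\int F\circ\pi\,d\mu$; in particular, in contrast with Lemma~\ref{c1}, ergodicity of $m$ plays no role in this implication — it was used there only to guarantee, via Lemma~\ref{key1}, that such a $\mu$ exists at all.
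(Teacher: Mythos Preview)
Your argument is correct and is exactly the computation the paper has in mind: the proof of Lemma~\ref{c1} goes through verbatim once $m$ is assumed to be an equilibrium state and $\mu$ a measure of maximal relative entropy over $m$, with ergodicity playing no role. The paper's own proof simply reads ``This is clear from the proof of Lemma~\ref{c1},'' so you have unpacked precisely what was intended.
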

\begin{proof}
This is clear from the proof of Lemma \ref{c1}.
\end{proof}

\begin{lemma}\label{c2}
Suppose $\mu$ is an equilibrium state for  $-F\circ \pi+\phi\circ\pi, \phi\in C(Y).$
Then $\pi \mu$ is an equilibrium state for $\phi$ and $\mu$ is a measure of maximal relative entropy over
$\pi \mu$.
\end{lemma}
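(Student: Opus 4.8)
The plan is to read the conclusion directly off the chain of inequalities established in the proof of equality (\ref{gsumd}), observing that when $\mu$ is an equilibrium state every intermediate inequality there must in fact be an equality.

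First I would record that, since $\mu\in M_{-F\circ\pi+\phi\circ\pi}(X,\sigma_X)$ and equality (\ref{gsumd}) holds, we have $h_\mu(\sigma_X)-\int F\circ\pi\,d\mu+\int\phi\circ\pi\,d\mu=P$, where $P:=\sup_{m\in M(Y,\sigma_Y)}\{h_m(\sigma_Y)+\int\phi\,dm\}$. Then, exactly as in the proof of (\ref{E1}), I would rewrite the left-hand side: since $\pi\mu\in M(Y,\sigma_Y)$, the relative variational principle applied with $f=0$ gives $\int F\circ\pi\,d\mu=\int P(\sigma_X,\pi,0)\,d\pi\mu=\sup\{h_{\tilde\mu}(\sigma_X):\tilde\mu\in M(X,\sigma_X),\ \pi\tilde\mu=\pi\mu\}-h_{\pi\mu}(\sigma_Y)$, the rearrangement being legitimate because all entropies on a subshift are finite. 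Substituting, I obtain
\begin{equation*}
P=h_\mu(\sigma_X)-\sup\{h_{\tilde\mu}(\sigma_X):\pi\tilde\mu=\pi\mu\}+h_{\pi\mu}(\sigma_Y)+\int\phi\,d\pi\mu\leq h_{\pi\mu}(\sigma_Y)+\int\phi\,d\pi\mu\leq P,
\end{equation*}
where the first inequality is $h_\mu(\sigma_X)\leq\sup\{h_{\tilde\mu}(\sigma_X):\pi\tilde\mu=\pi\mu\}$ and the second is the variational principle on $(Y,\sigma_Y)$.

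Since the two ends agree, both inequalities are equalities. Equality in the second one reads $h_{\pi\mu}(\sigma_Y)+\int\phi\,d\pi\mu=P$, i.e. $\pi\mu$ is an equilibrium state for $\phi$. Equality in the first one reads $h_\mu(\sigma_X)=\sup\{h_{\tilde\mu}(\sigma_X):\tilde\mu\in M(X,\sigma_X),\ \pi\tilde\mu=\pi\mu\}$, i.e. $\mu$ is a measure of maximal relative entropy over $\pi\mu$, which completes the proof.

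I do not expect a genuine obstacle here; the content is entirely in noticing that the inequalities of the proof of (\ref{E1}) are saturated under the equilibrium assumption. The one point requiring a little care is the identity $\int F\circ\pi\,d\mu=\sup\{h_{\tilde\mu}(\sigma_X):\pi\tilde\mu=\pi\mu\}-h_{\pi\mu}(\sigma_Y)$: it uses $\int F\circ\pi\,d\mu=\int F\,d\pi\mu$ for the pushforward $\pi\mu$, and is free of any $\infty-\infty$ ambiguity because $F=P(\sigma_X,\pi,0)$ is bounded (Theorem \ref{aboutP}) and $h_{\pi\mu}(\sigma_Y)$ is finite on a subshift.
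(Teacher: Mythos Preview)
Your proof is correct and follows essentially the same approach as the paper: both arguments unwind the chain of inequalities from the proof of (\ref{E1}) and observe that the equilibrium assumption forces each step to be an equality. The only cosmetic difference is that the paper phrases the maximal-relative-entropy conclusion as a proof by contradiction, whereas you run the same squeeze directly; your version is if anything slightly cleaner.
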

\begin{proof}
Let $\mu$ be an equilibrium state for  $-F\circ
\pi+\phi\circ\pi$. Assume that $\mu$ is not a measure of maximal
relative entropy over $\pi\mu$. Then
\begin{align}
 &\sup_{\bar {\mu} \in M(X, \sigma_X)}\{h_{\bar {\mu}}(\sigma_X)+\int (-F\circ\pi+\phi\circ\pi)d \bar{\mu}\}\\
 &=h_{\mu}(\sigma_X)+\int(-F\circ\pi+\phi\circ\pi)d\mu\\
 &= h_{\mu}(\sigma_X)-\sup\{h_{\tilde{\mu}}(\sigma_X):\tilde{\mu}\in M(X, \sigma_X), \pi \tilde{\mu}=\pi \mu\}
+ h_{\pi\mu}(\sigma_Y)+\int \phi\circ \pi d\mu\\
& < h_{\pi \mu}(\sigma_Y)+\int \phi d \pi \mu \label{<}\\&\leq
\sup_{m\in M(Y,
 \sigma_Y)}\{h_m(\sigma_Y)+\int \phi dm\} .
\end{align}
This is a contradiction to equality (\ref{gsumd}). Now we show
that $\pi\mu$ is an equilibrium state for $\phi.$ Since
$\mu$ is a measure of maximal relative entropy over $\pi\mu$,
replacing $<$ by $=$ in (\ref{<}) and using equality
(\ref{gsumd}), we obtain
$$h_{\pi \mu}(\sigma_Y)+\int \phi d \pi \mu =\sup_{m\in M(Y, \sigma_Y)}\{h_m(\sigma_Y)+\int \phi dm\}$$
Therefore, $\pi\mu$ is an equilibrium state for $\phi.$
\end{proof}

Next we will show that Theorem \ref{thm1} is still valid when we
replace $\phi\in C(Y)$ in (\ref{gsumu}) and (\ref{gsumd}) by
subadditive potentials on $(Y,\sigma_Y).$

We recently found the preprint \cite{FH} by Feng and Huang and the
upper semi-continuity in the proof of Proposition \ref{basic1}
(\ref{b2}) is shown in a more general setting in Proposition
3.1.(2) \cite{FH} for a larger class of potentials that contain
subadditive potentials.
\begin{proposition}\label{basic1}
Let $\Phi=\{\phi_n\}^{\infty}_{n=1}$ be a subadditive potential on
a subshift $(Y, \sigma_Y).$ Then
\begin{enumerate}
\item $M_{\Phi}(Y, \sigma_Y)$ is convex. \label{b1}\\
\item $M_{\Phi}(Y, \sigma_Y)$ is nonempty and compact.\label{b2}\\
\item The extreme points of $M_{\Phi}(Y, \sigma_Y)$ are precisely the ergodic members of $M_{\Phi}(Y, \sigma_Y).$\label{b3}\\
\item $\sup_{\mu\in M(Y, \sigma_Y)}\{h_{\mu}(\sigma_Y)+\lim_{n \rightarrow \infty}(1/n)\int\phi_{n} d\mu\}\label{b4}\\=
\sup_{\mu\in Erg(Y, \sigma_Y)}\{h_{\mu}(\sigma_Y)+\lim_{n\rightarrow \infty}(1/n)\int\phi_n d\mu\}.$\\
\item $M_{\Phi}(Y, \sigma_Y)$ contains an ergodic measure. \label{b5}
\end{enumerate}
\end{proposition}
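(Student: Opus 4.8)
The plan is to study the functional $Q(\mu):=h_{\mu}(\sigma_Y)+\Phi_*(\mu)$ on $M(Y,\sigma_Y)$, where $\Phi_*(\mu):=\lim_{n\to\infty}(1/n)\int\phi_n\,d\mu$, and I write $P:=P(\sigma_Y,\Phi)$; once $Q$ is shown to be affine and upper semi-continuous, parts (1), (2), (3) and (5) follow formally. First I would record the basic properties of $\Phi_*$: integrating the subadditivity inequality against a $\sigma_Y$-invariant measure shows that $a_n(\mu):=\int\phi_n\,d\mu$ satisfies $a_{n+m}(\mu)\le a_n(\mu)+a_m(\mu)$, so by Fekete's subadditive lemma the limit $\Phi_*(\mu)$ exists in $[-\infty,\infty)$ and equals $\inf_n a_n(\mu)/n$. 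Thus $\mu\mapsto\Phi_*(\mu)$ is a pointwise limit, equivalently an infimum, of the weak${}^{*}$-continuous affine maps $\mu\mapsto a_n(\mu)/n$; as a limit of affine maps it is affine, and as an infimum of continuous maps it is upper semi-continuous. Since $(Y,\sigma_Y)$ is a subshift, the partition into $1$-cylinders is a clopen generator, so $h_\mu(\sigma_Y)$ is likewise an infimum over $n$ of continuous functions of $\mu$ and hence affine and upper semi-continuous; therefore $Q$ is affine and upper semi-continuous on the compact convex set $M(Y,\sigma_Y)$. By Theorem \ref{CP}, $\sup_{\mu}Q(\mu)=P$, so $M_\Phi(Y,\sigma_Y)=\{\mu\in M(Y,\sigma_Y):Q(\mu)=P\}$. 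If $P=-\infty$, Theorem \ref{CP} gives $\Phi_*\equiv-\infty$, hence $Q\equiv-\infty$ and $M_\Phi(Y,\sigma_Y)=M(Y,\sigma_Y)$, for which all five assertions are classical; so I assume $P>-\infty$ from now on.

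Granting this, the remaining arguments are short. For (2): an upper semi-continuous function on a compact space attains its supremum, so $M_\Phi(Y,\sigma_Y)\neq\emptyset$, and $M_\Phi(Y,\sigma_Y)=\{Q\ge P\}$ is closed, hence compact. For (1): if $\mu_1,\mu_2\in M_\Phi(Y,\sigma_Y)$ and $0\le t\le 1$, then by affineness $Q(t\mu_1+(1-t)\mu_2)=tP+(1-t)P=P$, so the combination lies in $M_\Phi(Y,\sigma_Y)$. For (3): if $\mu\in M_\Phi(Y,\sigma_Y)$ is not ergodic, write $\mu=t\mu_1+(1-t)\mu_2$ with $\mu_1\neq\mu_2$ in $M(Y,\sigma_Y)$ and $0<t<1$; since $Q(\mu_1),Q(\mu_2)\le P$ and $tQ(\mu_1)+(1-t)Q(\mu_2)=Q(\mu)=P$, we get $Q(\mu_1)=Q(\mu_2)=P$, so $\mu_1,\mu_2\in M_\Phi(Y,\sigma_Y)$ and $\mu$ is not extreme in $M_\Phi(Y,\sigma_Y)$; conversely an ergodic member of $M_\Phi(Y,\sigma_Y)$ is an extreme point of $M(Y,\sigma_Y)$ and hence of the convex subset $M_\Phi(Y,\sigma_Y)$. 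For (5): $M_\Phi(Y,\sigma_Y)$ is a nonempty compact convex subset of the locally convex space of measures, so by the Krein--Milman theorem it has an extreme point, which by (3) is ergodic.

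For (4) the inequality $\ge$ is immediate. For $\le$, fix $\mu\in M(Y,\sigma_Y)$ with ergodic decomposition $\mu=\int\nu\,d\tau(\nu)$, $\tau$ carried by $Erg(Y,\sigma_Y)$. By the affinity of entropy under ergodic decomposition (Jacobs' theorem), $h_\mu(\sigma_Y)=\int h_\nu(\sigma_Y)\,d\tau(\nu)$. For each fixed $n$, $\Phi_*(\nu)\le a_n(\nu)/n$, so $\int\Phi_*(\nu)\,d\tau(\nu)\le(1/n)\int a_n(\nu)\,d\tau(\nu)=a_n(\mu)/n$, and letting $n\to\infty$ gives $\int\Phi_*(\nu)\,d\tau(\nu)\le\Phi_*(\mu)$. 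Hence $Q(\mu)\le\int Q(\nu)\,d\tau(\nu)\le\sup_{\nu\in Erg(Y,\sigma_Y)}Q(\nu)$; taking the supremum over $\mu$ proves (4). (The map $\nu\mapsto Q(\nu)$ is Borel, so the integrals are legitimate; the reverse inequality $\Phi_*(\mu)\le\int\Phi_*(\nu)\,d\tau(\nu)$, which would upgrade this to an identity, follows from the uniform bound $a_n(\nu)/n\le\|\phi_1\|_\infty$ and the reverse Fatou lemma, but is not needed.)

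The steps I expect to be the most delicate are the affineness and especially the upper semi-continuity of $Q$ — in particular the point that $\Phi_*$ is only an infimum of continuous affine functions rather than a Birkhoff average of a single continuous function, so the additive theory cannot simply be quoted (this is the input the paper attributes to \cite{FH} in the remark before the statement) — and the ergodic-decomposition step in (4), which for the same reason cannot rely on continuity of $\Phi_*$ and instead rests on the one-sided estimate $\Phi_*(\nu)\le a_n(\nu)/n$ holding for every fixed $n$.
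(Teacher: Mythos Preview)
Your overall strategy---showing that $Q(\mu)=h_\mu(\sigma_Y)+\Phi_*(\mu)$ is affine and upper semi-continuous on $M(Y,\sigma_Y)$ and then deducing (1)--(3) and (5) formally---is correct and is essentially what the paper does (the paper proves the upper semi-continuity of $\Phi_*$ by the same $\inf_n$ argument and then cites \cite{Wtext} and \cite{FH} for the remaining steps). Your arguments for (1), (2), (3) and (5) are fine.

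There is, however, a genuine error in your proof of (4). From $\int\Phi_*(\nu)\,d\tau(\nu)\le\Phi_*(\mu)$ and $h_\mu=\int h_\nu\,d\tau(\nu)$ you conclude ``Hence $Q(\mu)\le\int Q(\nu)\,d\tau(\nu)$'', but the inequality goes the \emph{other} way: what actually follows is $\int Q(\nu)\,d\tau(\nu)\le Q(\mu)$, which is useless for bounding $Q(\mu)$ above by the ergodic supremum. The inequality you need is the reverse one, $\Phi_*(\mu)\le\int\Phi_*(\nu)\,d\tau(\nu)$---precisely the inequality you dismiss in your parenthetical as ``not needed''. It \emph{is} needed, and the reverse Fatou argument you sketch (using the uniform upper bound $a_n(\nu)/n\le\|\phi_1\|_\infty$) is the correct way to obtain it:
\[
\Phi_*(\mu)=\lim_{n\to\infty}\int\frac{a_n(\nu)}{n}\,d\tau(\nu)\le\int\limsup_{n\to\infty}\frac{a_n(\nu)}{n}\,d\tau(\nu)=\int\Phi_*(\nu)\,d\tau(\nu).
\]
Alternatively, since your proofs of (2), (3) and (5) do not rely on (4), you can simply observe that (4) follows immediately from (5): any ergodic $\nu\in M_\Phi(Y,\sigma_Y)$ satisfies $Q(\nu)=P$, so the supremum over ergodic measures already equals $P$.
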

\begin{proof} By Theorem \ref{CP}, if $P(\sigma_Y, \Phi)=-\infty$, then $M_{\Phi}(Y, \sigma_Y)=M(Y, \sigma_Y)$ and we obtain the results.
Therefore, we consider the case when $P(\sigma_Y, \Phi)\neq -\infty.$ (\ref{b1}) is clear. For (\ref{b2}), we first notice that Subadditive Ergodic
Theorem implies that $h_{\mu}(\sigma_Y)+\lim_{n \rightarrow \infty}(1/n)\int\phi_{n} d\mu$ takes a value in $[-\infty, \infty)$ for each
$\mu\in M(Y, \sigma_Y)$. Let $\{\mu_{m}\}^{\infty}_{m=1}$ converge to $ \mu \in M(Y, \sigma_Y)$ in the weak* topology. Since the entropy map is
upper semi-continuous,  in order to show that $M_{\Phi}(Y, \sigma_Y)$ is nonempty, it is enough to prove
$$\limsup_{m\rightarrow \infty}\lim_{n\rightarrow\infty}\frac{1}{n}\int\phi_{n}d\mu_{m}\leq \lim_{n\rightarrow\infty}\frac{1}{n}\int\phi_{n}d\mu.$$
Using subadditivity, we have for $\mu\in M(Y, \sigma_Y)$,
$$\lim_{n\rightarrow\infty}\frac{1}{n}\int\phi_{n}d\mu\leq \frac{1}{n}\int\phi_{n}d\mu \text { for all } n \in \N.$$
Let $\mu_{m}$ be fixed. Then
$$\lim_{n\rightarrow\infty}\frac{1}{n}\int\phi_{n}d\mu_m\leq \frac{1}{n}\int\phi_{n}d\mu_m. $$
Let $n$ be fixed and $m\rightarrow \infty$. Then
\begin{equation*}
\begin{split}
\limsup_{m\rightarrow\infty}\lim_{n\rightarrow\infty}\frac{1}{n}\int\phi_{n}d\mu_m
& \leq
\limsup_{m\rightarrow\infty}\frac{1}{n}\int\phi_{n}d\mu_m\\& =
\frac{1}{n}\int\phi_{n}d\mu.
\end{split}
\end{equation*}
Letting $n\rightarrow \infty,$
$$\limsup_{m\rightarrow\infty}\lim_{n\rightarrow\infty}\frac{1}{n}\int\phi_{n}d\mu_m\leq \lim_{n\rightarrow\infty}\frac{1}{n}\int\phi_{n}d\mu.$$
Compactness follows by using the upper semi-continuity of the map $\mu\rightarrow h_{\mu}(\sigma_Y)+$\\
\noindent $\lim_{n\rightarrow \infty}(1/n)\int\phi_n d\mu$. For
(\ref{b3}), (\ref{b4}) and (\ref{b5}), noticing that $P(\sigma_Y,
\Phi)<\infty$ by (\ref{b2}), we make the standard arguments as in
the proofs of Theorem 8.7 and Corollary 8.6.1 (i) in \cite{Wtext},
combining with Proposition 3.1.(3) \cite{FH}.
\end{proof}
Let $(X,\sigma_X)$ be a subshift. Let $f:X\rightarrow \R$ be a
bounded Borel measurable function and
$\Phi=\{\phi_n\}^{\infty}_{n=1}$ a subadditive
potential on $(X,\sigma_X).$ A $\sigma_X$-invariant Borel
probability measure $\mu$ on $X$ is an {\em equilibrium state} for
$f +\Phi,$ if $h_{\mu}(\sigma_X)+\int f d\mu +\lim_{n\rightarrow
\infty}(1/n)\int \phi_n d \mu=\sup_{\mu \in M(X,
\sigma_X)}\{h_{\mu}(\sigma_X)+\int f d\mu+
\lim_{n\rightarrow\infty}(1/n)\int \phi_{n} d \mu \}.$ Denote by
$M_{f+\Phi}(X, \sigma_X)$ the collection of all equilibrium states
for $f+\Phi.$ It is easy to see that $\sup_{\mu \in M(X,
\sigma_X)}\{h_{\mu}(\sigma_X)+\int f d\mu+
\lim_{n\rightarrow\infty}(1/n)\int \phi_{n} d \mu \}=-\infty$ if and only if  $\lim_{n\rightarrow\infty}(1/n)\int \phi_{n} d \mu=-\infty$ for all $\mu\in M(X,\sigma_X)$.
\begin{proposition}\label{basic2}
Let $(X,\sigma_X), (Y, \sigma_Y)$ be subshifts  and $\pi:(X,
\sigma_X) \rightarrow (Y,\sigma_Y)$  a factor map. Let
$\Phi=\{\phi_n\}^{\infty}_{n=1}$ be a subadditive potential on
$(Y, \sigma_Y)$. Define $\Phi
\circ\pi=\{\phi_n\circ\pi\}^{\infty}_{n=1}$ and $F:Y\rightarrow
\R$ as in Theorem \ref{thm1}. Then
\begin{enumerate}
\item $M_{-F\circ\pi+\Phi\circ\pi}(X, \sigma_X)$ is convex.\label{b21}\\
\item $M_{-F\circ\pi+\Phi\circ\pi}(X, \sigma_X)$ is nonempty. \label{b22}\\
\item The extreme points of $M_{-F\circ\pi+\Phi\circ\pi}(X,\sigma_X)$ are precisely the ergodic members of    $M_{-F\circ\pi+\Phi\circ\pi}(X, \sigma_X).$ \label{b23}\\
\end{enumerate}
\end{proposition}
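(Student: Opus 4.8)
The plan is to follow the proof of Theorem~\ref{thm1}, substituting Proposition~\ref{basic1} and Lemma~\ref{key1} for the continuous-potential facts used there. Write $G(\mu)=h_{\mu}(\sigma_X)+\int(-F\circ\pi)\,d\mu+\lim_{n\to\infty}\frac1n\int(\phi_n\circ\pi)\,d\mu$ for $\mu\in M(X,\sigma_X)$, let $Q=\sup_{\mu\in M(X,\sigma_X)}G(\mu)$, and write $M_G$ for $M_{-F\circ\pi+\Phi\circ\pi}(X,\sigma_X)$. First observe that $\Phi\circ\pi$ is a genuine subadditive potential of continuous functions on $X$: each $\phi_n\circ\pi$ is continuous, and $\phi_{n+m}\circ\pi\le\phi_n\circ\pi+(\phi_m\circ\pi)\circ\sigma_X^{\,n}$ because $\pi\circ\sigma_X=\sigma_Y\circ\pi$; moreover $\int(\phi_n\circ\pi)\,d\mu=\int\phi_n\,d(\pi\mu)$, so $\lim_n\frac1n\int(\phi_n\circ\pi)\,d\mu=\lim_n\frac1n\int\phi_n\,d(\pi\mu)$. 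If $P(\sigma_Y,\Phi)=-\infty$, then by Theorem~\ref{CP} $\lim_n\frac1n\int\phi_n\,dm=-\infty$ for every $m\in M(Y,\sigma_Y)$, hence $G\equiv-\infty$ and $M_G=M(X,\sigma_X)$, for which (1)--(3) are standard; so assume henceforth $P(\sigma_Y,\Phi)\in\R$.

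The heart of the matter is the identity $Q=P(\sigma_Y,\Phi)$, obtained exactly as equality~(\ref{gsumd}) in Theorem~\ref{thm1} with $\phi_n$ in place of $n\phi$. For ``$\le$'', given $\mu\in M(X,\sigma_X)$ the relative variational principle gives $\int P(\sigma_X,\pi,0)\,d(\pi\mu)\ge h_{\mu}(\sigma_X)-h_{\pi\mu}(\sigma_Y)$, hence $h_{\mu}(\sigma_X)-\int(F\circ\pi)\,d\mu\le h_{\pi\mu}(\sigma_Y)$; adding $\lim_n\frac1n\int\phi_n\,d(\pi\mu)$ and taking the supremum over $\mu$ yields $Q\le\sup_{m\in M(Y,\sigma_Y)}\{h_m(\sigma_Y)+\lim_n\frac1n\int\phi_n\,dm\}=P(\sigma_Y,\Phi)$ by Theorem~\ref{CP}. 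For ``$\ge$'', for $m\in Erg(Y,\sigma_Y)$ Lemma~\ref{key1} produces $\tilde\mu\in M(X,\sigma_X)$ with $\pi\tilde\mu=m$ of maximal relative entropy over $m$, so $\int(F\circ\pi)\,d\tilde\mu=\int P(\sigma_X,\pi,0)\,dm=h_{\tilde\mu}(\sigma_X)-h_m(\sigma_Y)$ (all terms finite, as $X,Y$ have finite alphabets), whence $G(\tilde\mu)=h_m(\sigma_Y)+\lim_n\frac1n\int\phi_n\,dm$; taking the supremum over ergodic $m$ and invoking Proposition~\ref{basic1}(\ref{b4}) gives $Q\ge P(\sigma_Y,\Phi)$. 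In particular $Q\in\R$.

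Now (2) follows by choosing an ergodic $m_0\in M_\Phi(Y,\sigma_Y)$ (Proposition~\ref{basic1}(\ref{b5})) and a measure $\tilde\mu_0$ of maximal relative entropy over $m_0$ (Lemma~\ref{key1}): the computation above gives $G(\tilde\mu_0)=h_{m_0}(\sigma_Y)+\lim_n\frac1n\int\phi_n\,dm_0=P(\sigma_Y,\Phi)=Q$, so $\tilde\mu_0\in M_G$. For (1), I would repeat the argument of Proposition~\ref{basic1}(\ref{b1}): $\mu\mapsto h_\mu(\sigma_X)$ and $\mu\mapsto\int(-F\circ\pi)\,d\mu$ are affine, while for $\mu=t\mu_1+(1-t)\mu_2$ and every $n$, $\frac1n\int(\phi_n\circ\pi)\,d\mu=t\,\frac1n\int(\phi_n\circ\pi)\,d\mu_1+(1-t)\,\frac1n\int(\phi_n\circ\pi)\,d\mu_2\ge tg_1+(1-t)g_2$, where $g_i=\lim_k\frac1k\int(\phi_k\circ\pi)\,d\mu_i$ and we use that the limit of a subadditive sequence equals its infimum; taking the infimum over $n$ gives $G(\mu)\ge tG(\mu_1)+(1-t)G(\mu_2)$, and since $G(\mu)\le Q$ while $G(\mu_1)=G(\mu_2)=Q$ whenever $\mu_1,\mu_2\in M_G$, we get $G(\mu)=Q$, i.e.\ $\mu\in M_G$. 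For (3), I would use that $G$ is affine under the ergodic decomposition $\mu=\int\nu\,d\tau(\nu)$: the entropy term decomposes by Jacobs' theorem, the term $\int(-F\circ\pi)\,d\mu$ by disintegration, and by the subadditive ergodic theorem the $\sigma_X$-invariant a.e.\ limit of $\frac1n(\phi_n\circ\pi)$ has conditional expectation $\lim_n\frac1n\int(\phi_n\circ\pi)\,d\nu$ on $\tau$-a.e.\ ergodic $\nu$, so $G(\mu)=\int G(\nu)\,d\tau(\nu)$; then the standard argument (as in Theorem~8.7 and Corollary~8.6.1 of \cite{Wtext}) shows an ergodic member of $M_G$ is extreme in $M(X,\sigma_X)$ hence in $M_G$, and a non-ergodic $\mu\in M_G$ has $G(\nu)=Q$ for $\tau$-a.e.\ $\nu$, so splitting $\tau$ into two pieces of positive mass realises $\mu$ as a nontrivial convex combination of two distinct elements of $M_G$, contradicting extremality.

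The one structural obstacle is that $F$ is merely bounded Borel measurable, so $\mu\mapsto\int(-F\circ\pi)\,d\mu$ is not weak$^{*}$ continuous and $M_G$ need not be compact; this is exactly why Proposition~\ref{basic2}, unlike Proposition~\ref{basic1}(\ref{b2}), makes no compactness claim. The plan above is arranged to avoid it: existence is obtained constructively through Lemma~\ref{key1} rather than by a compactness argument, and both the convexity and the extreme-point description rest only on affineness properties of $G$, which are unaffected by the loss of continuity. The remaining care is purely bookkeeping with the possibly $-\infty$ value of the subadditive limit term, harmless since $h_\mu(\sigma_X)$ and $\int(-F\circ\pi)\,d\mu$ are bounded.
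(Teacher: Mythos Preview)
Your proposal is correct and follows essentially the same route as the paper's own proof: both reduce to establishing the analogue of equality~(\ref{gsumd}) in Theorem~\ref{thm1} with $\Phi$ in place of $\phi$ (using the relative variational principle for ``$\le$'' and Lemma~\ref{key1} plus Proposition~\ref{basic1}(\ref{b4}) for ``$\ge$''), deduce nonemptiness from Proposition~\ref{basic1}(\ref{b5}) together with Lemma~\ref{key1}, and handle (\ref{b21}) and (\ref{b23}) via the standard affineness/ergodic-decomposition arguments from \cite{Wtext} and \cite{FH}. Your write-up is more explicit than the paper's sketch (and your closing remark on why no compactness is asserted is apt), but there is no substantive difference in strategy.
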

\begin{proof} We consider the case when
$\sup_{\mu\in M(X, \sigma_X)}\{h_{\mu}(\sigma_X)-\int F\circ\pi
d\mu +\lim_{n\rightarrow \infty}(1/n)\int\phi_n \circ \pi
d\mu\}\neq -\infty$. (\ref{b21}) is clear. For (\ref{b22}), we
claim that equality (\ref{gsumd}) in Theorem \ref{thm1} holds when
we replace $\phi$ by $\Phi$. To see this, replacing $\phi$ by
$\Phi$, we take the supremum over all $\mu\in M(X, \sigma_X)$ such
that $\lim_{n\rightarrow \infty}(1/n)\int\phi_n \circ \pi d\mu
\neq -\infty$ in the proof of inequality (\ref{E1}). This gives
inequality (\ref{E1}) with $\phi$ replaced by $\Phi$. For the
reverse inequality, we make the same proof as in the proof of
equality (\ref{gsumd}) in Theorem \ref{thm1}, taking the supremum
over all $m\in Erg(Y, \sigma_Y)$ such that $\lim_{n\rightarrow
\infty}(1/n)\int\phi_n dm \neq -\infty$ and using Proposition
\ref{basic1} (\ref{b4}). Now the claim is proved. Using
Proposition \ref{basic1} (\ref{b5}), Lemma \ref{c1} holds when we
replace $\phi$ by $\Phi$. Therefore, we obtain (\ref{b22}).  For
(\ref{b23}), since (\ref{b2}) implies $\sup_{\mu\in M(X,
\sigma_X)}\{h_{\mu}(\sigma_X)-\int F\circ\pi d\mu
+\lim_{n\rightarrow \infty}(1/n)\int\phi_n \circ \pi
d\mu\}<\infty$,  we use \cite{Wtext} as in the proof of
Proposition \ref{basic1}.
\end{proof}
\begin{theorem}\label{thm2}
Let $(X,\sigma_X), (Y, \sigma_Y)$ be subshifts  and $\pi:(X,
\sigma_X) \rightarrow (Y,\sigma_Y)$ a factor map. Let
$\Phi=\{\phi_n\}^{\infty}_{n=1}$ be a subadditive potential on
$(Y, \sigma_Y)$. Define $\Phi
\circ\pi=\{\phi_n\circ\pi\}^{\infty}_{n=1}$ and $F:Y\rightarrow
\R$ as in Theorem \ref{thm1}. Then
\begin{align}
& \sup_{\mu \in M(X, \sigma_X)}\{h_{\mu}(\sigma_X)-\int F\circ\pi d\mu+ \lim_{n\rightarrow\infty}\frac{1}{n}\int \phi_{n}\circ\pi d \mu\} \label{subpx}\\
&=\sup_{m\in
M(Y,\sigma_Y)}\{h_{m}(\sigma_Y)+\lim_{n\rightarrow\infty}\frac{1}{n}\int
\phi_{n} dm\}, \label{subpy}
\end{align}
for all $\Phi$.  Then $M_{-F\circ \pi+\Phi\circ\pi}(X,\sigma_X)$
is nonempty. If $P(\sigma_Y, \Phi)\neq -\infty$, then $\mu$ is an
equilibrium state for $-F\circ \pi+\Phi\circ\pi$ if and only if
(i) $\pi\mu$ is an equilibrium state for $\Phi$ and (ii) $\mu$ is
a measure of maximal relative entropy over $\pi \mu$.
\end{theorem}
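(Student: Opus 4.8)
The plan is to follow the template of Theorem \ref{thm1}, recycling the scaffolding already assembled in Propositions \ref{basic1} and \ref{basic2}. First I would dispose of the degenerate case: if $P(\sigma_Y,\Phi)=-\infty$, then by Theorem \ref{CP} we have $\lim_{n\to\infty}(1/n)\int\phi_n\,dm=-\infty$ for every $m\in M(Y,\sigma_Y)$, so the supremum in (\ref{subpy}) equals $-\infty$; since $\int\phi_n\circ\pi\,d\mu=\int\phi_n\,d\pi\mu$ with $\pi\mu\in M(Y,\sigma_Y)$, the supremum in (\ref{subpx}) equals $-\infty$ as well, the asserted equality is trivial, and $M_{-F\circ\pi+\Phi\circ\pi}(X,\sigma_X)=M(X,\sigma_X)$. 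So from now on I assume $P(\sigma_Y,\Phi)\neq-\infty$, which by Proposition \ref{basic1}(\ref{b2}) means this pressure is finite.

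Second, the equality (\ref{subpx}) $=$ (\ref{subpy}) together with the nonemptiness of $M_{-F\circ\pi+\Phi\circ\pi}(X,\sigma_X)$ are precisely the ``claim'' already established inside the proof of Proposition \ref{basic2}: the inequality ``$\le$'' comes from running the computation of (\ref{E1}) with $\phi$ replaced by $\Phi$ and the supremum restricted to those $\mu\in M(X,\sigma_X)$ with $\lim_{n\to\infty}(1/n)\int\phi_n\circ\pi\,d\mu\neq-\infty$ (the remaining $\mu$ contribute $-\infty$), while ``$\ge$'' repeats the proof of (\ref{gsumd}), using Lemma \ref{key1} to lift an ergodic $m$ on $Y$ to a measure of maximal relative entropy on $X$ and Proposition \ref{basic1}(\ref{b4}) to restrict the supremum on $Y$ to ergodic measures. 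Nonemptiness then follows from the subadditive analogue of Lemma \ref{c1}: pick an ergodic equilibrium state $m$ for $\Phi$ (it exists by Proposition \ref{basic1}(\ref{b5})) and a measure $\mu$ of maximal relative entropy over $m$ (Lemma \ref{key1}); the chain of equalities in Lemma \ref{c1}, with $\int\phi\circ\pi\,d\mu$ and $\int\phi\,d\pi\mu$ replaced by $\lim_{n\to\infty}(1/n)\int\phi_n\circ\pi\,d\mu$ and $\lim_{n\to\infty}(1/n)\int\phi_n\,d\pi\mu$, shows $\mu$ attains the supremum in (\ref{subpx}).

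Third, for the characterization I would prove the two implications separately, mirroring Lemmas \ref{c1.5} and \ref{c2}. For sufficiency, suppose $\pi\mu$ is an equilibrium state for $\Phi$ and $\mu$ is of maximal relative entropy over $\pi\mu$. The relative variational principle gives $\int F\circ\pi\,d\mu=\int P(\sigma_X,\pi,0)\,d\pi\mu=\sup\{h_{\tilde\mu}(\sigma_X):\tilde\mu\in M(X,\sigma_X),\,\pi\tilde\mu=\pi\mu\}-h_{\pi\mu}(\sigma_Y)=h_\mu(\sigma_X)-h_{\pi\mu}(\sigma_Y)$, hence $h_\mu(\sigma_X)-\int F\circ\pi\,d\mu+\lim_{n\to\infty}(1/n)\int\phi_n\circ\pi\,d\mu=h_{\pi\mu}(\sigma_Y)+\lim_{n\to\infty}(1/n)\int\phi_n\,d\pi\mu$, which equals the supremum in (\ref{subpy}) because $\pi\mu$ is an equilibrium state for $\Phi$; by (\ref{subpx}) $=$ (\ref{subpy}), $\mu$ is an equilibrium state for $-F\circ\pi+\Phi\circ\pi$. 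For necessity, let $\mu$ be an equilibrium state; since the pressure in (\ref{subpy}) is finite the value at $\mu$ is finite, and as $h_\mu(\sigma_X)$ and $\int F\circ\pi\,d\mu$ are bounded this forces $\lim_{n\to\infty}(1/n)\int\phi_n\circ\pi\,d\mu$ to be finite. If $\mu$ were not of maximal relative entropy over $\pi\mu$, then rewriting the (attained) supremum in (\ref{subpx}) as $h_\mu(\sigma_X)-\sup\{h_{\tilde\mu}(\sigma_X):\tilde\mu\in M(X,\sigma_X),\,\pi\tilde\mu=\pi\mu\}+h_{\pi\mu}(\sigma_Y)+\lim_{n\to\infty}(1/n)\int\phi_n\,d\pi\mu<h_{\pi\mu}(\sigma_Y)+\lim_{n\to\infty}(1/n)\int\phi_n\,d\pi\mu\le\sup_{m\in M(Y,\sigma_Y)}\{h_m(\sigma_Y)+\lim_{n\to\infty}(1/n)\int\phi_n\,dm\}$ would contradict (\ref{subpx}) $=$ (\ref{subpy}); so $\mu$ is of maximal relative entropy over $\pi\mu$, and then replacing the strict inequality by an equality shows $h_{\pi\mu}(\sigma_Y)+\lim_{n\to\infty}(1/n)\int\phi_n\,d\pi\mu$ equals the supremum over $M(Y,\sigma_Y)$, i.e.\ $\pi\mu$ is an equilibrium state for $\Phi$.

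The part I expect to require the most care is the bookkeeping of the $[-\infty,\infty)$-valued quantities: the cancellations and supremum restrictions above are legitimate only once one records that the entropies are bounded (as $X$ and $Y$ have finitely many symbols), that $F$ is bounded (Theorem \ref{aboutP}), and that $\lim_{n\to\infty}(1/n)\int\phi_n\,d\nu>-\infty$ for the measures actually attaining the suprema. Everything else is a direct transcription of the continuous-potential arguments of Theorem \ref{thm1}, with each term $\int\psi\circ\pi\,d\mu$ replaced by $\lim_{n\to\infty}(1/n)\int\phi_n\circ\pi\,d\mu$.
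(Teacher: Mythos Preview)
Your proposal is correct and follows essentially the same route as the paper: dispose of the case $P(\sigma_Y,\Phi)=-\infty$, invoke the claim inside Proposition~\ref{basic2}(\ref{b22}) for equality (\ref{subpx})$=$(\ref{subpy}) and nonemptiness, and then rerun Lemmas~\ref{c1.5} and~\ref{c2} with $\phi$ replaced by $\Phi$ for the characterization. Your added bookkeeping on the $[-\infty,\infty)$ values is a welcome clarification but does not change the structure.
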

\begin{proof}
If $P(\sigma_Y, \Phi)=-\infty$, then we clearly have the results.
Suppose $P(\sigma_Y, \Phi)\neq -\infty$. Equality (\ref{subpy}) is
shown in the first part of the proof of Proposition \ref{basic2}
(\ref{b22}).
 For the if and only if part, we use Lemmas  \ref{c1.5} and
\ref{c2} with $\phi$ replaced by $\Phi$.
\end{proof}
\begin{corollary}\label{nice}
 Suppose $(X,\sigma_X)$ is an irreducible shift of finite type and $(Y, \sigma_Y)$ is a subshift.
 Then we can replace $\vert D_n(y)\vert $ of $F$ (see Theorem \ref{aboutP}) by $\vert \pi^{-1}[y_1\cdots y_n]\vert$ in Theorems \ref{thm1} and \ref{thm2}.
\end{corollary}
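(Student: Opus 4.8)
\emph{Proposal.} The plan is to compare $F$ with the candidate replacement
$\widetilde F(y)=\limsup_{n\to\infty}(1/n)\log\vert\pi^{-1}[y_1\cdots y_n]\vert$
and to observe that, although $F$ and $\widetilde F$ need not agree pointwise, they agree almost everywhere with respect to \emph{every} invariant measure on $Y$ — and that this is all that is ever used about $F$ in the proofs of Theorems \ref{thm1} and \ref{thm2}.

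First I would record that $\widetilde F$ is a bounded Borel measurable function. For each fixed $n$ the map $y\mapsto(1/n)\log\vert\pi^{-1}[y_1\cdots y_n]\vert$ depends only on the block $y_1\cdots y_n$, hence is locally constant and in particular Borel; and since $\pi$ is surjective one has $1\le\vert\pi^{-1}[y_1\cdots y_n]\vert\le S^n$, where $S$ is the number of symbols of $X$, so $0\le(1/n)\log\vert\pi^{-1}[y_1\cdots y_n]\vert\le\log S$. A $\limsup$ of Borel functions is Borel, so $\widetilde F$ is Borel and $0\le\widetilde F\le\log S$. Next, since $(X,\sigma_X)$ is an irreducible shift of finite type, Theorem \ref{iPS} together with Theorem \ref{aboutP} gives $F(y)=P(\sigma_X,\pi,0)(y)=\widetilde F(y)$ for $m$-almost every $y$, for every $m\in M(Y,\sigma_Y)$.

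Then I would check that in both theorems $F$ enters only through integrals against invariant measures, so that this almost-everywhere equality suffices. In the proof of Theorem \ref{thm1} the relative variational principle is invoked in the form $\int P(\sigma_X,\pi,0)\,dm=\sup\{h_\mu(\sigma_X)-h_m(\sigma_Y):\mu\in M(X,\sigma_X),\ \pi\mu=m\}$ for $m\in M(Y,\sigma_Y)$; since $\widetilde F=P(\sigma_X,\pi,0)$ holds $m$-a.e., we get $\int\widetilde F\,dm=\int P(\sigma_X,\pi,0)\,dm$, so every displayed computation in the proof of Theorem \ref{thm1} and in Lemmas \ref{c1}, \ref{c1.5}, \ref{c2} remains valid verbatim with $\widetilde F$ in place of $F$. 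Moreover, for any $\mu\in M(X,\sigma_X)$ one has $\pi\mu\in M(Y,\sigma_Y)$, hence $\widetilde F=F$ holds $\pi\mu$-a.e., and therefore $\widetilde F\circ\pi=F\circ\pi$ holds $\mu$-a.e. (the exceptional set is $\pi^{-1}$ of a $\pi\mu$-null set); consequently $\int(-\widetilde F\circ\pi+\phi\circ\pi)\,d\mu=\int(-F\circ\pi+\phi\circ\pi)\,d\mu$ for every $\mu$. Thus the two weighted-entropy functionals on $M(X,\sigma_X)$ coincide, the suprema in (\ref{gsumu}) and (\ref{gsumd}) are unchanged, and $M_{-\widetilde F\circ\pi+\phi\circ\pi}(X,\sigma_X)=M_{-F\circ\pi+\phi\circ\pi}(X,\sigma_X)$, so the characterization of equilibrium states is preserved. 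The same argument with $\Phi\circ\pi$ in place of $\phi\circ\pi$ handles Theorem \ref{thm2}: for $\mu\in M(X,\sigma_X)$ the quantities $\lim_n(1/n)\int\phi_n\circ\pi\,d\mu$ are untouched, $\int\widetilde F\circ\pi\,d\mu=\int F\circ\pi\,d\mu$, and Proposition \ref{basic1}(\ref{b4}) allows the reduction to ergodic $m$ on $Y$ exactly as in the original proof.

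I expect essentially no difficulty here; the only two points requiring a moment's care are (i) that $\vert\pi^{-1}[y_1\cdots y_n]\vert\ge1$ for all $y\in Y$, which uses surjectivity of $\pi$, and (ii) that ``a.e. with respect to every invariant measure'' is genuinely enough, i.e.\ that $F$ is never evaluated at an individual point in the proofs but always appears inside an integral $\int F\,dm$ or $\int F\circ\pi\,d\mu$. Both are immediate, so the main (very mild) obstacle is simply the bookkeeping of re-reading the proofs of Theorems \ref{thm1} and \ref{thm2} to confirm (ii).
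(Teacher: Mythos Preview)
Your proposal is correct and follows exactly the paper's approach: the paper's proof consists of the single line ``Apply Theorem \ref{iPS},'' and your argument is simply a careful unpacking of why that application suffices, namely that $F=\widetilde F$ holds a.e.\ with respect to every invariant measure and $F$ enters the proofs of Theorems \ref{thm1} and \ref{thm2} only through such integrals.
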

\begin{proof} Apply Theorem \ref{iPS}.
\end{proof}
\begin{proposition}
Under the assumptions of Proposition \ref{basic2}, $M_{-F\circ\pi +\Phi\circ\pi}(X,\sigma_X)$ is compact if $M_{\Phi}(Y, \sigma_Y)$ consists of one point.
\end{proposition}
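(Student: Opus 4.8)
The plan is to combine the description of the equilibrium states provided by Theorem \ref{thm2} with two standard facts about the subshift $(X,\sigma_X)$: the push-forward $\mu\mapsto\pi\mu$ is weak$^*$-continuous, and, since subshifts are expansive, the entropy map $\mu\mapsto h_\mu(\sigma_X)$ is upper semi-continuous. The whole point is that ``being a measure of maximal relative entropy over a fixed measure'' is, on the relevant fiber, a closed condition.

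First I would dispose of the case $P(\sigma_Y,\Phi)=-\infty$. By the equality of (\ref{subpx}) and (\ref{subpy}) in Theorem \ref{thm2} together with Theorem \ref{CP}, the supremum in (\ref{subpx}) then equals $-\infty$, so every invariant measure on $X$ is an equilibrium state; that is, $M_{-F\circ\pi+\Phi\circ\pi}(X,\sigma_X)=M(X,\sigma_X)$, which is weak$^*$-compact (the hypothesis on $M_{\Phi}(Y,\sigma_Y)$ is not even needed here).

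Now assume $P(\sigma_Y,\Phi)\neq-\infty$ and write $M_{\Phi}(Y,\sigma_Y)=\{m_0\}$. By the ``if and only if'' part of Theorem \ref{thm2}, a measure $\mu\in M(X,\sigma_X)$ is an equilibrium state for $-F\circ\pi+\Phi\circ\pi$ precisely when $\pi\mu=m_0$ and $\mu$ is a measure of maximal relative entropy over $m_0$, i.e.\ $h_\mu(\sigma_X)=R$, where
\[
R:=\sup\{\,h_{\bar\mu}(\sigma_X):\bar\mu\in M(X,\sigma_X),\ \pi\bar\mu=m_0\,\}
\]
(a finite number, since $0\le R\le\log S$ with $S$ the number of symbols of $X$, and attained because $M_{-F\circ\pi+\Phi\circ\pi}(X,\sigma_X)$ is nonempty by Theorem \ref{thm2}). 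The key observation I would use is that every $\mu$ with $\pi\mu=m_0$ automatically satisfies $h_\mu(\sigma_X)\le R$, so on the fiber $\{\mu\in M(X,\sigma_X):\pi\mu=m_0\}$ the equation $h_\mu(\sigma_X)=R$ is equivalent to the inequality $h_\mu(\sigma_X)\ge R$. Hence
\[
M_{-F\circ\pi+\Phi\circ\pi}(X,\sigma_X)=\{\mu\in M(X,\sigma_X):\pi\mu=m_0\}\cap\{\mu\in M(X,\sigma_X):h_\mu(\sigma_X)\ge R\}.
\]

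Finally I would check that both sets on the right are weak$^*$-closed in $M(X,\sigma_X)$. For the first, continuity of $\mu\mapsto\pi\mu$ follows from $\int g\,d(\pi\mu)=\int (g\circ\pi)\,d\mu$ for $g\in C(Y)$ together with $g\circ\pi\in C(X)$, so the set is the preimage of the closed set $\{m_0\}$; for the second, upper semi-continuity of the entropy map makes $\{\mu:h_\mu(\sigma_X)\ge R\}$ closed. Thus $M_{-F\circ\pi+\Phi\circ\pi}(X,\sigma_X)$ is a closed subset of the compact space $M(X,\sigma_X)$ and is therefore compact. I expect the only delicate step to be the reduction of the maximal-relative-entropy condition to a closed condition, which is exactly the observation that $h_\mu(\sigma_X)\le R$ throughout the fiber over $m_0$; the remaining semi-continuity statements are routine.
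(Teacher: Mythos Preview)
Your proof is correct and follows essentially the same route as the paper: both arguments hinge on the singleton hypothesis forcing $\pi\mu_{n}=m_0$ throughout, together with upper semi-continuity of the entropy map and weak$^*$-continuity of $\mu\mapsto\pi\mu$. The paper carries this out sequentially (take a convergent subsequence $\mu_{n_k}\to\mu$, use the relative variational principle in the form $h_{\mu_{n_k}}(\sigma_X)=h_{\pi\mu_{n_k}}(\sigma_Y)+\int P(\sigma_X,\pi,0)\,d\pi\mu_{n_k}$ together with $\pi\mu_{n_k}=m_0=\pi\mu$ to squeeze $h_\mu(\sigma_X)$), while you repackage the same content topologically as the intersection of the closed fiber $\{\pi\mu=m_0\}$ with the closed superlevel set $\{h_\mu(\sigma_X)\ge R\}$; the substantive ingredients are identical.
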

\begin{proof}
If $P(\sigma_Y, \Phi)=-\infty$, then $M_{-F\circ\pi+\Phi\circ\pi}(X, \sigma_X)=M(X,\sigma_X)$ and so we obtain the result.  If $P(\sigma_Y, \Phi)\neq -\infty$, then Proposition \ref{basic1} (\ref{b2}) implies that  $P(\sigma_Y, \Phi)< \infty$.
Let $\mu_n\in M_{-F\circ\pi+\Phi\circ\pi}(X, \sigma_X)$ for all $n\in \N.$  Since
$\{\mu_n\}_{n=1}^{\infty}$ is a sequence in $M(X, \sigma_X)$, there exists a subsequence $\{\mu_{n_k}\}_{k=1}^{\infty}$ that converges to $\mu\in M(X,\sigma_X).$ We show that $\mu\in M_{-F\circ \pi +\Phi\circ\pi}(X,\sigma_X).$
Clearly $\{\pi \mu_{n_k}\}^{\infty}_{k=1}$ converges to $\pi\mu$ in the weak* topology. By Theorem \ref{thm2}, $\pi\mu_{n_k}\in M_{\Phi}(Y, \sigma_Y)$ for all $k\in \N$ and so
\begin{equation*}
\begin{split}
P(\sigma_Y, \Phi)&=\limsup_{k\rightarrow\infty}(h_{\pi \mu_{n_k}}(\sigma_Y)+\lim_{m\rightarrow \infty}\frac{1}{m}\int\phi_m  d\pi\mu_{n_k})\\
& \leq h_{\pi \mu}(\sigma_Y)+\lim_{m\rightarrow\infty
}\frac{1}{m}\int\phi_m  d\pi\mu\leq P(\sigma_Y, \Phi).
\end{split}
\end{equation*}
Thus $\pi\mu\in M_{\Phi}(Y, \sigma_Y).$ Now we show that $\mu$ is
a measure of maximal relative entropy over $\pi \mu.$ Since by Theorem \ref{thm2} $\mu_{n_k}$ is a measure of
maximal relative entropy over $\pi\mu_{n_k},$
\begin{equation*}
\begin{split}
& \limsup_{k\rightarrow\infty }(h_{\pi \mu_{n_k}}(\sigma_Y)+ \int P(\sigma_X, \pi, 0) d \pi \mu_{n_k})\leq h_{\mu}(\sigma_X)\\
& \leq h_{\pi \mu}(\sigma_Y)+ \int P(\sigma_X, \pi, 0) d \pi \mu.
\end{split}
\end{equation*}
 Since $M_{\Phi}(Y, \sigma_Y)$ consists of one point, if $M_{\Phi}(Y, \sigma_Y)=\{m\},$  then $\pi \mu_{n_k}=m$ for all $k \in \N.$ Therefore, $h_\mu(\sigma_X)=h_{\pi \mu}(\sigma_Y)+\int P(\sigma_X, \pi, 0)d\pi\mu.$ Now we apply Theorem \ref{thm2} to obtain $\mu\in M_{-F\circ \pi +\Phi\circ\pi}(X,\sigma_X).$
\end{proof}
In \cite{S2}, the measures that maximize the weighted entropy
functionals $\phi_{\alpha}$ were studied for the case when there
is a {\em continuous} saturated compensation function between
subshifts. We extend the results to arbitrary subshifts without
the existence of a {\em continuous} saturated compensation
function. Applying Theorem \ref{thm2}, we first study the case
when $(X,\sigma_X)$ is an irreducible shift of finite type.
\begin{proposition}\label{sftwentropy}
Let $(X,\sigma_X)$ be an irreducible shift of finite type, $(Y, \sigma_Y)$ a subshift and
$\pi:(X,\sigma_X) \rightarrow (Y,\sigma_Y)$ a factor map. Let
$\alpha>0.$  For each $n\in  \N,$ define
${\phi_n}:Y\rightarrow \R$ by ${\phi_n}(y)= \log \vert
\pi^{-1}[y_1 \cdots y_n] \vert^{1/(\alpha+1)}$ and let
$\Phi=\{\phi_n\}^{\infty}_{n=1}$. Then
\begin{align}
& \sup_{\mu \in M (X,\sigma_X)}\{h_{\mu}(\sigma_X)+\alpha h_{\pi \mu}(\sigma_Y)\} \label{0}\\
&=(\alpha+1)\sup_{m \in M(Y, \sigma_Y)}\{h_{m}(\sigma_Y)+\lim_{n\rightarrow \infty}\frac{1}{n}\int\phi_n dm\} \label{sftmd}\\&=
(\alpha+1) \sup_{\mu \in M(X, \sigma_X)}\{h_{\mu}(\sigma_X)-\lim_{n\rightarrow \infty}\frac{1}{n}\int\alpha \phi_n \circ \pi d\mu\}.\label{sftmu}
\end{align}
There exists a $\sigma_X$-invariant ergodic Borel probability measure on $X$ that attains the maximum in
(\ref{0}). Let $K_{\alpha}$ be the collection of measures that
attain the maximum in (\ref{0}). Then $\mu\in K_{\alpha}$ if and
only if (i) $\pi\mu$ is an equilibrium state for $\Phi$ and (ii) $\mu$ is a measure of maximal relative entropy over $\pi \mu.$
\end{proposition}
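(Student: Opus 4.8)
The plan is to realize $\Phi=\{\phi_n\}_{n=1}^{\infty}$ as a subadditive potential on $(Y,\sigma_Y)$ built from continuous functions, so that Theorems \ref{CP} and \ref{thm2} apply, and then to read off the two displayed equalities and the description of $K_\alpha$ from the relative variational principle together with the identification of $P(\sigma_X,\pi,0)$ supplied by Theorem \ref{iPS}. First I would check that $\Phi$ is such a potential: each $\phi_n$ depends only on $y_1\cdots y_n$, hence is locally constant (so continuous), and $\phi_n\ge 0$ because $\pi$ is onto; moreover, restricting an allowable block $x_1\cdots x_{n+m}$ of $X$ lying over $y_1\cdots y_{n+m}$ to its first $n$ and its last $m$ coordinates gives an injection of $\pi^{-1}[y_1\cdots y_{n+m}]$ into $\pi^{-1}[y_1\cdots y_n]\times\pi^{-1}[(\sigma_Y^n y)_1\cdots(\sigma_Y^n y)_m]$, so that $\vert\pi^{-1}[y_1\cdots y_{n+m}]\vert\le\vert\pi^{-1}[y_1\cdots y_n]\vert\cdot\vert\pi^{-1}[(\sigma_Y^n y)_1\cdots(\sigma_Y^n y)_m]\vert$, and taking $\tfrac{1}{\alpha+1}\log$ gives the subadditivity $\phi_{n+m}(y)\le\phi_n(y)+\phi_m(\sigma_Y^n y)$. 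Since $\phi_n\ge 0$, Theorem \ref{CP} yields $P(\sigma_Y,\Phi)\ne-\infty$, so the equilibrium-state clauses of Theorem \ref{thm2} are in force.

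The one substantive step is the identity
\begin{equation*}
\int P(\sigma_X,\pi,0)\,dm=(\alpha+1)\lim_{n\to\infty}\frac{1}{n}\int\phi_n\,dm \qquad\text{for every } m\in M(Y,\sigma_Y).
\end{equation*}
Because $X$ is an irreducible shift of finite type, Theorem \ref{iPS} gives $P(\sigma_X,\pi,0)(y)=\limsup_n\tfrac{1}{n}\log\vert\pi^{-1}[y_1\cdots y_n]\vert$ for $m$-a.e.\ $y$; subadditivity of $\{\log\vert\pi^{-1}[y_1\cdots y_n]\vert\}$ and Kingman's subadditive ergodic theorem upgrade this limsup to an honest limit $m$-a.e.\ and give $\int\big(\lim_n\tfrac{1}{n}\log\vert\pi^{-1}[y_1\cdots y_n]\vert\big)\,dm=\lim_n\tfrac{1}{n}\int\log\vert\pi^{-1}[y_1\cdots y_n]\vert\,dm$. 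As $P(\sigma_X,\pi,0)$ is bounded (Theorem \ref{aboutP}) and $\log\vert\pi^{-1}[y_1\cdots y_n]\vert=(\alpha+1)\phi_n$, the identity follows; it is Corollary \ref{nice} in integrated form.

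Granting the identity, the two equalities are short. For $(\ref{0})=(\ref{sftmd})$, split the supremum over $\mu\in M(X,\sigma_X)$ according to $m=\pi\mu$; since $\pi$ induces a surjection of $M(X,\sigma_X)$ onto $M(Y,\sigma_Y)$ and the relative variational principle gives $\sup\{h_{\tilde\mu}(\sigma_X):\pi\tilde\mu=m\}=h_m(\sigma_Y)+\int P(\sigma_X,\pi,0)\,dm$ (recall $h_m(\sigma_Y)<\infty$), one obtains $\sup_\mu\{h_\mu(\sigma_X)+\alpha h_{\pi\mu}(\sigma_Y)\}=\sup_m\{(\alpha+1)h_m(\sigma_Y)+\int P(\sigma_X,\pi,0)\,dm\}$, and the identity rewrites the right-hand side as $(\alpha+1)\sup_m\{h_m(\sigma_Y)+\lim_n\tfrac{1}{n}\int\phi_n\,dm\}$. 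For $(\ref{sftmd})=(\ref{sftmu})$, apply Theorem \ref{thm2} to the subadditive potential $\Phi$: inside $(\ref{subpx})$ one has $\int F\circ\pi\,d\mu=\int P(\sigma_X,\pi,0)\,d\pi\mu=(\alpha+1)\lim_n\tfrac{1}{n}\int\phi_n\circ\pi\,d\mu$ by the identity, hence $-\int F\circ\pi\,d\mu+\lim_n\tfrac{1}{n}\int\phi_n\circ\pi\,d\mu=-\lim_n\tfrac{1}{n}\int\alpha\phi_n\circ\pi\,d\mu$, so the equality $(\ref{subpx})=(\ref{subpy})$ becomes $\sup_\mu\{h_\mu(\sigma_X)-\lim_n\tfrac{1}{n}\int\alpha\phi_n\circ\pi\,d\mu\}=\sup_m\{h_m(\sigma_Y)+\lim_n\tfrac{1}{n}\int\phi_n\,dm\}$; multiplying by $\alpha+1$ gives $(\ref{sftmu})=(\ref{sftmd})$.

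For the description of $K_\alpha$, the estimates above give, for every $\nu\in M(X,\sigma_X)$,
\begin{equation*}
h_\nu(\sigma_X)+\alpha h_{\pi\nu}(\sigma_Y)\le(\alpha+1)\Big(h_{\pi\nu}(\sigma_Y)+\lim_{n\to\infty}\tfrac{1}{n}\int\phi_n\,d\pi\nu\Big)\le\sup_{\mu}\{h_\mu(\sigma_X)+\alpha h_{\pi\mu}(\sigma_Y)\},
\end{equation*}
where the first inequality is the relative-variational-principle bound $h_\nu(\sigma_X)-h_{\pi\nu}(\sigma_Y)\le\int P(\sigma_X,\pi,0)\,d\pi\nu$ and the second is $(\ref{0})=(\ref{sftmd})$. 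Hence $\nu\in K_\alpha$ if and only if both inequalities are equalities, i.e.\ if and only if $\nu$ is a measure of maximal relative entropy over $\pi\nu$ and $\pi\nu$ is an equilibrium state for $\Phi$ --- equivalently $K_\alpha=M_{-F\circ\pi+\Phi\circ\pi}(X,\sigma_X)$ by Theorem \ref{thm2}. To produce an ergodic member, take an ergodic $m\in M_\Phi(Y,\sigma_Y)$ (Proposition \ref{basic1} (\ref{b5})) and lift it to a measure $\mu$ of maximal relative entropy over $m$ (Lemma \ref{key1}), so $\mu\in K_\alpha$; since $\nu\mapsto h_\nu(\sigma_X)+\alpha h_{\pi\nu}(\sigma_Y)$ is affine, almost every ergodic component of a maximizer is again a maximizer, which gives an ergodic element of $K_\alpha$. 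The main obstacle is the identity of the second paragraph: $P(\sigma_X,\pi,0)$ is \emph{a priori} determined only through its integrals and the intractable $\vert D_n(y)\vert$-formula, and it is precisely the irreducibility of the shift of finite type $X$ (via Theorem \ref{iPS}), together with Kingman's theorem, that allows it to be replaced by the explicit subadditive limit built from $\vert\pi^{-1}[y_1\cdots y_n]\vert$; everything else is bookkeeping with the relative variational principle and a direct application of Theorem \ref{thm2}.
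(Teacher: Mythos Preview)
Your proof is correct and follows essentially the same route as the paper's: both arguments verify that $\Phi$ is a subadditive potential, pass from the weighted entropy functional to the quantity $(\alpha+1)h_m(\sigma_Y)+\int P(\sigma_X,\pi,0)\,dm$ via the relative variational principle, invoke Theorem \ref{iPS} together with Kingman's subadditive ergodic theorem to replace $\int P(\sigma_X,\pi,0)\,dm$ by $(\alpha+1)\lim_n\tfrac{1}{n}\int\phi_n\,dm$, and then apply Theorem \ref{thm2} (with Corollary \ref{nice}) to obtain (\ref{sftmu}) and the characterization of $K_\alpha$. The only noteworthy difference is the existence of an ergodic maximizer: the paper simply cites \cite[Proposition 3.1.2]{S2}, whereas you give a self-contained construction by lifting an ergodic $m\in M_\Phi(Y,\sigma_Y)$ via Lemma \ref{key1} and then passing to an ergodic component using affineness of the weighted entropy functional --- a mild but genuine improvement in self-containment.
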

\begin{proof}
It was shown in \cite{S2} (Proposition 3.1.2) that a $\sigma_X$-invariant ergodic Borel probability measure on $X$ that attains
the maximum in (\ref{0}) exists for $(X, \sigma_X),(Y, \sigma_Y)$ subshifts. Clearly, if $\bar\mu \in K_\alpha,$ then $\bar
\mu$ attains the maximum in $\sup \{h_{\mu}(\sigma_{X})-h_{\pi
\bar \mu}(\sigma_{Y}):\mu\in M(X, \sigma_X), \pi{\mu}=\pi \bar
\mu\}.$ Define $F:Y\rightarrow \R$ as in Theorem \ref{thm1}. Then
\begin{equation*}
\begin{split}
&\sup_{\mu \in M(X, \sigma_X)}\{h_{\mu}(\sigma_X)+\alpha
h_{\pi\mu}(\sigma_Y)\}=
\sup_{\mu\in M(X, \sigma_X)}\{h_{\mu}(\sigma_X)-h_{\pi\mu}(\sigma_Y)+(1+\alpha)h_{\pi \mu}(\sigma_Y)\}\\
&=\sup_{\mu \in M(X, \sigma_X)}\{\sup \{h_{\tilde{\mu}}(\sigma_{X})-h_{\pi \mu}(\sigma_{Y}): \tilde{\mu}\in M(X, \sigma_X), \pi {\tilde{\mu}}=\pi \mu \}+ (1+\alpha)h_{\pi \mu}(\sigma_Y)\}\\
&=\sup_{\mu \in
M(X,\sigma_X)}\{(\alpha+1)h_{\pi\mu}(\sigma_X)+\int F d\pi\mu\}\\
&=(\alpha+1)\sup_{m \in
M(Y,\sigma_Y)}\{h_{m}(\sigma_Y)+\frac{1}{\alpha+1}\int F dm\}.
\end{split}
\end{equation*}
Clearly $\Phi$ is a subadditive potential on $(Y, \sigma_Y).$ Therefore, using
Theorem \ref{iPS} and  Kingman's Subadditive
Ergodic Theorem (Theorem 10.1 in \cite{Wtext}), we get equality (\ref{sftmd}).
For equality (\ref{sftmu}), we apply Theorem \ref{thm2} (set $\phi_n(y)=\log \vert
\pi^{-1}[y_1 \cdots y_n] \vert^{1/(\alpha+1)})$ and Corollary \ref{nice}. Now suppose $\mu\in K_{\alpha}.$ Since $\mu$ is a measure of maximal relative entropy over $\pi\mu$, we conclude that $\pi\mu$ is an equilibrium state for $\Phi$ by using the proof of (\ref{sftmd}).
  Next suppose that $m$ is an equilibrium state for $\Phi$ and $\mu$ is a measure of maximal relative entropy over $m$. Then
 \begin{equation*}
 \begin{split}
 &(\alpha+1)\big (h_m(\sigma_Y)+\lim_{n\rightarrow \infty}\frac{1}{n}\int  \phi_n  dm \big)
 =(\alpha+1)h_{\pi\mu}(\sigma_Y)+  \lim_{n\rightarrow \infty}\frac{1}{n}\int (\alpha+1) \phi_n  d\pi\mu  \\
 &=\sup \{h_{\tilde{\mu}}(\sigma_X)-h_{\pi\mu}(\sigma_{Y}):\tilde{\mu}\in M(X, \sigma_X), \pi\tilde{\mu}=\pi\mu\}+(\alpha+1)h_{\pi\mu}(\sigma_Y)\\
 &=h_{\mu}(\sigma_X)+\alpha h_{\pi\mu}(\sigma_Y)
 \end{split}
 \end{equation*}
 Using equality (\ref{sftmd}), we conclude that $\mu\in K_{\alpha}$.
\end{proof}
\begin{theorem}\label{wentropy}
Let $(X,\sigma_X),(Y, \sigma_Y)$ be subshifts and
$\pi:(X,\sigma_X) \rightarrow (Y,\sigma_Y)$ a factor map. Let
$\alpha>0$ and $F$ be defined as in Theorem \ref{thm1}. Then
\begin{align}
& \sup_{\mu \in M (X,\sigma_X)}\{h_{\mu}(\sigma_X)+\alpha h_{\pi \mu}(\sigma_Y)\} \label{0.5}\\&=(\alpha+1)\sup_{m \in M(Y, \sigma_Y)}\{h_{m}(\sigma_Y)+\frac{1}{\alpha+1}\int Fdm\} \label{eq1}\\
&=(\alpha+1)\sup_{\mu\in
M(X,\sigma_X)}\{h_{\mu}(\sigma_X)-\frac{\alpha}{\alpha+1}\int F
\circ \pi d \mu\}\label{eq2}
\end{align}
There exists a $\sigma_X$-invariant ergodic Borel probability measure that attains the maximum in
(\ref{0.5}). Let $K_{\alpha}$ be the collection of measures that
attain the maximum in (\ref{0.5}). Then $K_{\alpha}=M_{-(\alpha/(\alpha+1))F}(X,\sigma_X)$. $\mu\in K_{\alpha}$ if and
only if (i) $\pi\mu$ is an equilibrium state for $(1/(\alpha+1))
F$ and (ii) $\mu$ is a measure of maximal relative entropy over $\pi \mu.$
\end{theorem}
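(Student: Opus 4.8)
The plan is to reduce everything to the subadditive machinery already established in Theorem \ref{thm2}, specialized to the constant sequence of potentials built from $F$. First I would observe that $F = P(\sigma_X,\pi,0)$ is a bounded Borel measurable function by Theorem \ref{thm1}, so the single function $(1/(\alpha+1))F$ is a legitimate bounded Borel measurable potential on $(Y,\sigma_Y)$, and $-(\alpha/(\alpha+1))F\circ\pi$ is a bounded Borel measurable potential on $(X,\sigma_X)$; hence the equilibrium-state notions used in the statement make sense. The chain of equalities \eqref{0.5}=\eqref{eq1}=\eqref{eq2} would be proved exactly as in the displayed computation inside the proof of Proposition \ref{sftwentropy}: starting from $h_\mu(\sigma_X)+\alpha h_{\pi\mu}(\sigma_Y) = \bigl(h_\mu(\sigma_X)-h_{\pi\mu}(\sigma_Y)\bigr)+(1+\alpha)h_{\pi\mu}(\sigma_Y)$, take the supremum over $\mu$ with $\pi\mu$ fixed, use the relative variational principle to replace $\sup\{h_{\tilde\mu}(\sigma_X)-h_{\pi\mu}(\sigma_Y):\pi\tilde\mu=\pi\mu\}$ by $\int F\,d\pi\mu$, and then rewrite the outer supremum over $M(X,\sigma_X)$ as a supremum over $M(Y,\sigma_Y)$ via surjectivity of $\pi$ on invariant measures. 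This gives \eqref{eq1}. For \eqref{eq2}, apply Theorem \ref{thm2} with $\Phi$ the constant sequence $\phi_n \equiv \bigl(n/(\alpha+1)\bigr)F$ (so that $\lim_n (1/n)\int\phi_n\,dm = (1/(\alpha+1))\int F\,dm$), which is trivially subadditive, in fact additive; note that $P(\sigma_Y,\Phi)\neq -\infty$ here since $F$ is bounded, so the full conclusion of Theorem \ref{thm2} is available.

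Next I would establish existence of an ergodic maximizer for \eqref{0.5}. By Proposition \ref{basic1}\eqref{b5} applied to the subadditive potential $(1/(\alpha+1))F$ (viewed as the constant sequence above) on $(Y,\sigma_Y)$, $M_{(1/(\alpha+1))F}(Y,\sigma_Y)$ contains an ergodic measure $m$; by Lemma \ref{key1} there is $\mu\in M(X,\sigma_X)$ with $\pi\mu=m$ that is of maximal relative entropy over $m$, and by Theorem \ref{thm2} such $\mu$ is an equilibrium state for $-(\alpha/(\alpha+1))F\circ\pi$, hence (after multiplying by $\alpha+1$ and using the chain of equalities) attains the maximum in \eqref{0.5}. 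To get an \emph{ergodic} maximizer, I would pass to an ergodic component: by Proposition \ref{basic2}\eqref{b23} the extreme points of $M_{-(\alpha/(\alpha+1))F\circ\pi}(X,\sigma_X)$ are exactly its ergodic members, and this set is nonempty and convex, so it has an ergodic extreme point. Alternatively one may cite Proposition 3.1.2 of \cite{S2} directly as in Proposition \ref{sftwentropy}.

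For the identification $K_\alpha = M_{-(\alpha/(\alpha+1))F}(X,\sigma_X)$ and the ``if and only if'' characterization, I would argue that for any $\mu\in M(X,\sigma_X)$,
\begin{align*}
h_\mu(\sigma_X)+\alpha h_{\pi\mu}(\sigma_Y) &= (\alpha+1)\Bigl(h_\mu(\sigma_X) - \tfrac{\alpha}{\alpha+1}\int F\circ\pi\,d\mu\Bigr) \\
&\quad + \alpha\Bigl(\int F\circ\pi\,d\mu - \bigl(h_\mu(\sigma_X)-h_{\pi\mu}(\sigma_Y)\bigr)\Bigr);
\end{align*}
the second term is $\geq 0$ by the relative variational principle and equals $0$ exactly when $\mu$ is of maximal relative entropy over $\pi\mu$. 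Thus $\mu$ maximizes the left side iff it simultaneously maximizes $h_\mu(\sigma_X)-\tfrac{\alpha}{\alpha+1}\int F\circ\pi\,d\mu$ (i.e.\ $\mu\in M_{-(\alpha/(\alpha+1))F}(X,\sigma_X)$, using \eqref{eq2}) and has zero relative entropy defect. Combining this with the ``if and only if'' clause of Theorem \ref{thm2} (applied to the additive $\Phi$ above, which gives: equilibrium state for $-(\alpha/(\alpha+1))F\circ\pi$ $\iff$ $\pi\mu$ is an equilibrium state for $(1/(\alpha+1))F$ and $\mu$ is maximal relative entropy over $\pi\mu$) yields the stated characterization and the equality $K_\alpha = M_{-(\alpha/(\alpha+1))F}(X,\sigma_X)$.

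The main obstacle I anticipate is purely bookkeeping: making sure the scalar factors $1/(\alpha+1)$ and $\alpha/(\alpha+1)$ are threaded consistently through the relative variational principle and through the reindexing of the supremum from $M(X,\sigma_X)$ to $M(Y,\sigma_Y)$, and confirming that Theorem \ref{thm2} genuinely applies with a bounded Borel measurable (indeed continuous-free) potential $F$ rather than a continuous one — which it does, since Theorem \ref{thm2} is stated for subadditive potentials of real-valued functions and Theorem \ref{thm1} already installed $F$ as bounded Borel measurable. No new analytic input beyond what is in Theorems \ref{thm1}, \ref{thm2} and Propositions \ref{basic1}, \ref{basic2} should be needed.
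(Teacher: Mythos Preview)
There is a genuine gap in your reduction to Theorem~\ref{thm2}. In this paper a \emph{subadditive potential} is, by definition, a sequence of real-valued \emph{continuous} functions (see the paragraph before Theorem~\ref{CP}), and Theorem~\ref{thm2}, as well as Propositions~\ref{basic1} and~\ref{basic2}, are stated under that hypothesis. The function $F=P(\sigma_X,\pi,0)$ is in general only bounded Borel measurable, so the sequence $\phi_n=(n/(\alpha+1))F$ you build is \emph{not} a subadditive potential in the sense required, and you cannot invoke Theorem~\ref{thm2} (or Proposition~\ref{basic1}\eqref{b5}, or Proposition~\ref{basic2}\eqref{b23}) as a black box. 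Your last paragraph asserts the opposite, but that is a misreading of the hypotheses. The paper itself flags exactly this obstruction in its proof of Theorem~\ref{wentropy}: ``We observe that we cannot apply Theorem~\ref{thm2} in order to show $K_\alpha=M_{-(\alpha/(\alpha+1))F}(X,\sigma_X)$ because $\Psi=\{\log|D_n(y)|\}_{n=1}^\infty$ is not always a sequence of continuous functions on $(Y,\sigma_Y)$.''

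The paper's workaround is to bypass Theorem~\ref{thm2} entirely and instead reuse the \emph{proofs} of the earlier lemmas with $\phi$ replaced by the bounded Borel function $(1/(\alpha+1))F$, after checking that continuity of $\phi$ is nowhere used in those particular arguments. Concretely: equality~\eqref{eq1} comes from the displayed computation in Proposition~\ref{sftwentropy} (as you say); the existence of an ergodic member of $K_\alpha$ is quoted from \cite{S2} rather than deduced from Proposition~\ref{basic1}\eqref{b5}; equality~\eqref{eq2} is obtained by rerunning the proof of~\eqref{gsumd} in Theorem~\ref{thm1} with $\phi=(1/(\alpha+1))F$, using that an ergodic maximizer already exists; and the characterization $K_\alpha=M_{-(\alpha/(\alpha+1))F}(X,\sigma_X)$ together with the ``if and only if'' is obtained by rerunning the proofs of Lemmas~\ref{c1.5} and~\ref{c2} with $\phi=(1/(\alpha+1))F$. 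Your algebraic identity $h_\mu+\alpha h_{\pi\mu}=(\alpha+1)\bigl(h_\mu-\tfrac{\alpha}{\alpha+1}\!\int F\circ\pi\,d\mu\bigr)+\alpha\bigl(\int F\circ\pi\,d\mu-(h_\mu-h_{\pi\mu})\bigr)$ is correct and, once~\eqref{eq2} is in hand, gives a clean alternative route to $K_\alpha=M_{-(\alpha/(\alpha+1))F}(X,\sigma_X)$ without citing Theorem~\ref{thm2}; but you still need a continuity-free argument for~\eqref{eq2} and for the existence of an ergodic maximizer before that identity can close the loop.
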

\begin{remark}
Walters (Theorem 3.4. \cite{Wcom}) showed that if there is a
saturated compensation function $G \circ \pi,$ $G \in C(Y)$, then
$\int G dm=-\int F dm$ for all $m\in M(Y,\sigma_Y)$. Therefore
Theorem \ref{wentropy}  extends the results of Shin (Theorem 1.1
\cite{S3}).
\end{remark}
\begin{proof} We get equality (\ref{eq1}) by the same proof as in Proposition \ref{sftwentropy}.
We notice that $M_{(1/(\alpha+1))F}(Y, \sigma_Y)$ contains an
ergodic measure. If $\mu$ is an ergodic member of $K_{\alpha}$,
then $\pi\mu$ is an ergodic member of $
M_{(1/(\alpha+1))F}(Y,\sigma_Y)$ by the proof of equality
(\ref{sftmd}) in Proposition \ref{sftwentropy}. Hence equality
(\ref{eq2}) follows from the proof of equality (\ref{gsumd}) in
Theorem \ref{thm1} (with $\phi=(1/(\alpha+1))F$). For the if and
only if part, we use the proof of Proposition \ref{sftwentropy},
replacing $\Phi$ by $(1/(\alpha+1))F$. We observe that we cannot
apply Theorem \ref{thm2} in order to show
$K_{\alpha}=M_{-(\alpha/(\alpha+1))F}(X,\sigma_X)$ because
$\Psi=\{\log \vert D_n(y)\vert\}^{\infty}_{n=1}$ is not always a
sequence of continuous functions on $(Y,\sigma_Y)$. Suppose
$\mu\in K_{\alpha}.$ Since $\pi\mu$ is an equilibrium state for
$(1/(\alpha+1)) F$ and $\mu$ is a measure of maximal relative
entropy over $\pi \mu,$ we conclude that $\mu\in
M_{-(\alpha/(\alpha+1))F}(X,\sigma_X)$ by using the proof of Lemma
\ref{c1.5} (with $\phi=(1/(\alpha+1))F$) and equality (\ref{eq2}).
Now suppose that $\mu\in M_{-(\alpha/(\alpha+1))F}(X,\sigma_X)$
(such a $\mu$ exists from the above). Since Lemma \ref{c2} holds
for $\phi=(1/(\alpha+1))F$, we obtain $\mu\in K_{\alpha}$.
\end{proof}
\section{Applications---dimensions of nonconformal expanding maps}\label{application}
We will apply the results from section \ref{main1} to find the
Hausdorff dimension of a compact invariant set of a nonconformal
expanding map and measures of full dimension. In particular, we
consider a compact invariant set of a nonconformal expanding map
of the torus represented by a diagonal matrix. For more details
and background material on this section, see \cite{Yayama}. We now
review the definitions of an NC carpet and SFT-NC carpet. Fix two
positive integers $l$ and $m$, $l>m\geq 2.$ Throughout this paper,
we let $T$ be the endomorphism of the torus ${\T}^{2}=
{\R}^{2}/{\Z}^{2}$ given by $T(x,y)=(lx \textnormal{ mod }1, my
\textnormal{ mod }1).$ Let
$$\p=\{ [\frac{i}{l}, \frac {i+1}{l}]\times[\frac{j}{m},
\frac{j+1}{m}]: 0\leq i \leq l-1, 0\leq j\leq m-1\}$$ be the
natural Markov partition for $T.$ Label $[\frac{i}{l}, \frac
{i+1}{l}]\times[\frac{j}{m}, \frac{j+1}{m}], 0\leq i\leq l-1,
0\leq j\leq m-1$, by the symbol $(i,j)$. Define $(\Sigma^{+}_{lm},
\sigma)$ to be the full shift on these $lm$ symbols. Consider the
coding map $\chi: \Sigma ^{+}_{lm}\rightarrow {\T}^{2},$ defined by
\label{codingmap}
\begin{displaymath}
\chi(\{(x_k,
y_k)\}_{k=1}^{\infty})=(\sum_{k=1}^{\infty}\frac{x_k}{l^k},
\sum_{k=1}^ {\infty} \frac{y_k}{m^k}).
\end{displaymath}
\label{symrep} Let $R=\{(a_1,b_1), (a_2,b_2) \cdots, (a_r,b_r)\}$
be a subalphabet of the symbols of $\p.$ The nonconformal carpet
(\em NC carpet\em) $K(T,R)$ is defined by \label{gscarpet}
\begin{equation*}K(T,R)=\{( \sum_{k=1}^{\infty}\frac{x_k}{l^k},
\sum_{k=1}^{\infty}\frac{y_k}{m^k}): (x_k, y_k)\in R \text{ for
all } k \in \N\}.
\end{equation*}
It is a compact $T$-invariant subset of the torus. Denote by $A$ a
transition matrix among the members of $R$,
 so that $A$ is an $r
\times r$ matrix with entries 0 or 1. The \em SFT-NC carpet
$K(T,R,A)$ \em is defined by \label{dsc}
\begin{equation*}
K(T,R,A)=\{(\sum_{k=1}^{\infty}\frac{x_k}{l^{k}},
\sum_{k=1}^{\infty} \frac{y_k}{m^{k}}): (x_k, y_k)\in R,
A_{(x_k,y_k)(x_{k+1}, y_{k+1})}=1 \textnormal{ for all } k\in \N
\}.
\end{equation*}
Now let $(X,\sigma_X)$ be the shift of finite type with the transition matrix $A$ as above. Let $\pi:X\rightarrow Y$ be the factor map determined by
the one-block map $\pi((a_k,b_k))=b_k$ and let $Y=\pi(X)$.   In \cite{Yayama},
the Hausdorff dimension for $K(T,R,A)$ was
studied. If there exists a saturated compensation function
$G\circ\pi,G\in C(Y),$ then the Hausdorff dimension is
given by
\begin{equation*}
\dim_{H}K(T, R, A)= \frac{1}{\log m}\sup_{\mu \in M(X, \sigma_X)}\{h_{\mu}(\sigma_X)+\frac{\alpha}{\alpha+1}\int G\circ\pi d\mu\},
\end{equation*}
and the $T$-invariant ergodic measures of full dimension are the
ergodic equilibrium states for $(\alpha/(\alpha+1))G\circ\pi.$
This formula extends the formula for the Hausdorff dimension of
$K(T,R)$ given by McMullen \cite{Mc}.

Given a shift of finite type $(X, \sigma_X)$, a subshift $(Y,
\sigma_Y)$ and factor map $\pi:(X,\sigma_X)\rightarrow (Y,
\sigma_Y),$ we can construct an SFT-NC carpet by defining $T$ and
choosing $R$ appropriately. The construction is not unique. We
call such a carpet an SFT-NC carpet corresponding to $(X, Y,\pi).$

We will extend the formula above to the general case by
considering a compact $T$-invariant set whose symbolic
representation is a subshift. Let $S\subset \Sigma^{+}_{r} \subset
\Sigma^{+}_{lm} $ be a subshift on the members of $R$. Then
$\chi(S)$ is a {\em subshift NC carpet}. Given any subshifts $(X,
\sigma_X), (Y, \sigma_Y)$ and a factor map
$\pi:(X,\sigma_X)\rightarrow (Y, \sigma_Y),$ we define a
subshift-NC carpet corresponding to  $(X, Y, \pi)$ in the same
manner as we did for an SFT-NC carpet. For the symbolic
representation of such a carpet, there does not always exist a
saturated compensation function. In Theorem \ref{application1}, we
will give a formula for the Hausdorff dimension of a subshift-NC
carpet and characterize the invariant ergodic measures of full
dimension as the ergodic equilibrium states of a bounded Borel
measurable function.

Recall that for $y\in Y, \vert \pi^{-1}[y_1\cdots y_{n}]\vert
$ denotes the cardinality of $E_{n}(y)$ (see page \pageref{En}) and, in particular, it is the number of blocks of  $x_1\cdots x_n$ of length $n$ in $X$ that are mapped to
the block $y_1 \cdots y_n$ in $Y$ if $X$ is an irreducible shift of
finite type. \label{defespp} Let  $\Phi=\{\phi_n\}_{n=1}^{\infty}$ be a subadditive potential on $(X,\sigma_X).$ Then $-\Phi=\{-\phi_n\}_{n=1}^{\infty}$ is a superadditive potential on $(X,\sigma_X)$. A measure $\mu\in M(X,\sigma_X)$ is an {\em equilibrium state} for $-\Phi$ if $h_{\mu}(\sigma_X)-\lim_{n\rightarrow\infty}(1/n)\int\phi_n d\mu=\sup_{\mu \in M(X, \sigma_X)}\{h_{\mu}(\sigma_X)-\lim_{n\rightarrow \infty}(1/n)\int\phi_n d\mu\}.$
We note that $\sup_{\mu \in M(X, \sigma_X)}\{h_{\mu}(\sigma_X)-\lim_{n\rightarrow \infty}(1/n)\int\phi_n d\mu\}$ takes a value in $(-\infty, \infty]$.
\begin{theorem}\label{HDDFORSFT}
Let $(X, \sigma_X)$  be an irreducible shift of finite type, $(Y,
\sigma_Y)$ a subshift, $\pi:(X,\sigma_X) \rightarrow (Y,\sigma_Y)$
factor map.  For $l,m\in \N, l>m\geq 2,$ set $\alpha=\log
_{m}l-1.$ Define $\phi_n:Y\rightarrow \R$ for each  $n\in \N$ by
$\phi_n(y)= \log \vert \pi^{-1} [y_1 \cdots y_n]
\vert^{1/(\alpha+1)}.$ Let $\Phi=\{\phi_n\}^{\infty}_{n=1}$ and
$-\alpha \Phi \circ\pi=\{-\alpha \phi_n\circ\pi\}^{\infty}_{n=1}.$
Then the Hausdorff dimension of an SFT-NC carpet $K$ corresponding
to $(X,Y,\pi)$ is given by
\begin{align}
\dim_{H}K &=\frac{1}{\log m}\sup_{\nu \in M(Y, \sigma_Y)}\{h_{\nu}(\sigma_Y)+\lim_{n\rightarrow\infty}\frac{1}{n}\int \phi_n d \nu\} \label{HDformula1}\\&=\frac{1}{\log m}\sup_{\mu \in M(X,\sigma_X)}\{h_{\mu}(\sigma_X)-\lim_{n\rightarrow\infty}\frac{1}{n}\int \alpha \phi_n \circ\pi d\mu\} \label{HDformula2}\\
&= \frac{1}{\log m}\limsup_{n\rightarrow\infty} \frac{1}{n} \log
(\sum_{y_1 \cdots y_n \in B_n(Y)}  \vert \pi^{-1}[y_1\cdots
y_n]\vert^{1/(\alpha+1)}).\label{HDformula3}
\end{align}
The $T$-invariant ergodic measures of full dimension for $K$ are
the ergodic equilibrium states for $-\alpha \Phi\circ\pi.$
\end{theorem}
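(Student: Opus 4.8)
The plan is to reduce everything to Proposition \ref{sftwentropy}, the subadditive variational principle (Theorem \ref{CP}), and the variational description of $\dim_H K$ obtained in \cite{Yayama}. Recall from \cite{Yayama} (building on \cite{Mc} and \cite{GP}) that, with $X$ an irreducible shift of finite type and $\alpha=\log_m l-1$ (so $(\alpha+1)\log m=\log l$), an SFT-NC carpet $K$ corresponding to $(X,Y,\pi)$ satisfies
\begin{equation*}
\dim_H K=\frac{1}{(\alpha+1)\log m}\sup_{\mu\in M(X,\sigma_X)}\{h_\mu(\sigma_X)+\alpha h_{\pi\mu}(\sigma_Y)\},
\end{equation*}
that for each $\mu\in Erg(X,\sigma_X)$ the measure $\chi_*\mu$ on $K$ has $\dim_H\chi_*\mu=\frac{1}{(\alpha+1)\log m}(h_\mu(\sigma_X)+\alpha h_{\pi\mu}(\sigma_Y))$, and that the $T$-invariant ergodic measures of full dimension for $K$ correspond under $\chi$ exactly to the $\mu\in Erg(X,\sigma_X)$ attaining this supremum. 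These facts use no compensation function; in \cite{Yayama} a saturated compensation function was invoked only to rewrite the supremum, which is precisely what Proposition \ref{sftwentropy} now does unconditionally. Before applying it, I would record two easy facts: $\Phi=\{\phi_n\}$ is a subadditive potential on $(Y,\sigma_Y)$, since $|\pi^{-1}[y_1\cdots y_{n+m}]|\le|\pi^{-1}[y_1\cdots y_n]|\,|\pi^{-1}[y_{n+1}\cdots y_{n+m}]|$ gives $\phi_{n+m}(y)\le\phi_n(y)+\phi_m(\sigma^n_Y y)$; and each $\phi_n$ is locally constant, hence continuous, because $|\pi^{-1}[y_1\cdots y_n]|$ depends only on $y_1,\dots,y_n$.

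Granting these, (\ref{HDformula1}) and (\ref{HDformula2}) follow immediately: equalities (\ref{sftmd}) and (\ref{sftmu}) of Proposition \ref{sftwentropy} (applicable since $X$ is an irreducible shift of finite type) rewrite $\sup_\mu\{h_\mu(\sigma_X)+\alpha h_{\pi\mu}(\sigma_Y)\}$ as $(\alpha+1)\sup_\nu\{h_\nu(\sigma_Y)+\lim_n\frac1n\int\phi_n\,d\nu\}$ and as $(\alpha+1)\sup_\mu\{h_\mu(\sigma_X)-\lim_n\frac1n\int\alpha\phi_n\circ\pi\,d\mu\}$, and dividing by $(\alpha+1)\log m$ gives the two formulas. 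For (\ref{HDformula3}) I would apply Theorem \ref{CP} to $\Phi$ to identify $\sup_\nu\{h_\nu(\sigma_Y)+\lim_n\frac1n\int\phi_n\,d\nu\}$ with the subadditive pressure $P(\sigma_Y,\Phi)$, and then evaluate $P(\sigma_Y,\Phi)$ from its separated-set definition: since $\phi_n$ is constant on $n$-cylinders, a maximal $(n,2^{-N})$-separated set meets each allowed $(n+N-1)$-block of $Y$ once, so $P_n(\sigma_Y,\Phi,2^{-N})$ differs from $\sum_{y_1\cdots y_n\in B_n(Y)}|\pi^{-1}[y_1\cdots y_n]|^{1/(\alpha+1)}$ only by a factor (at most $|B_1(Y)|^{N-1}$) that is subexponential in $n$; letting $N\to\infty$ yields $P(\sigma_Y,\Phi)=\limsup_n\frac1n\log\sum_{y_1\cdots y_n\in B_n(Y)}|\pi^{-1}[y_1\cdots y_n]|^{1/(\alpha+1)}$ (in fact a genuine limit, the sum being submultiplicative in $n$). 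Multiplying by $(\log m)^{-1}$ gives (\ref{HDformula3}).

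For the measures of full dimension it suffices, by the correspondence quoted above, to prove that the collection of equilibrium states of the superadditive potential $-\alpha\Phi\circ\pi$ on $X$ coincides with the set $K_\alpha$ of maximizers of $\mu\mapsto h_\mu(\sigma_X)+\alpha h_{\pi\mu}(\sigma_Y)$; intersecting with $Erg(X,\sigma_X)$ (nonempty by Proposition \ref{sftwentropy}) and pushing forward under $\chi$ then finishes the proof. For any $\mu\in M(X,\sigma_X)$, Theorem \ref{iPS}, Theorem \ref{aboutP}, and Kingman's subadditive ergodic theorem give $\lim_n\frac1n\int\alpha\phi_n\circ\pi\,d\mu=\frac{\alpha}{\alpha+1}\int P(\sigma_X,\pi,0)\,d\pi\mu=\frac{\alpha}{\alpha+1}\bigl(M(\pi\mu)-h_{\pi\mu}(\sigma_Y)\bigr)$, where $M(\pi\mu):=\sup\{h_{\bar\mu}(\sigma_X):\pi\bar\mu=\pi\mu\}$, so
\begin{equation*}
h_\mu(\sigma_X)-\lim_n\tfrac1n\int\alpha\phi_n\circ\pi\,d\mu=\tfrac{1}{\alpha+1}\bigl(h_\mu(\sigma_X)+\alpha h_{\pi\mu}(\sigma_Y)\bigr)-\tfrac{\alpha}{\alpha+1}\bigl(M(\pi\mu)-h_\mu(\sigma_X)\bigr)\le\tfrac{1}{\alpha+1}\bigl(h_\mu(\sigma_X)+\alpha h_{\pi\mu}(\sigma_Y)\bigr),
\end{equation*}
with equality exactly when $\mu$ is a measure of maximal relative entropy over $\pi\mu$. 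Combining this with (\ref{HDformula2}) (equivalently equality (\ref{sftmu})), any equilibrium state $\mu$ of $-\alpha\Phi\circ\pi$ must attain $\sup_\mu\{h_\mu(\sigma_X)+\alpha h_{\pi\mu}(\sigma_Y)\}$, i.e.\ $\mu\in K_\alpha$; conversely every $\mu\in K_\alpha$ is a measure of maximal relative entropy over $\pi\mu$ (as noted in the proof of Proposition \ref{sftwentropy}), so the displayed quantity equals $\frac{1}{\alpha+1}\sup_\mu\{h_\mu(\sigma_X)+\alpha h_{\pi\mu}(\sigma_Y)\}$ and $\mu$ is an equilibrium state of $-\alpha\Phi\circ\pi$. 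The main obstacle is bookkeeping rather than a new idea: one must keep track that $-\alpha\Phi\circ\pi$ is a \emph{super}additive potential (using the equilibrium-state notion introduced just before the statement), route the identification through the maximal-relative-entropy property supplied by Proposition \ref{sftwentropy}, and carry along the dimension-theoretic facts imported from \cite{Yayama}.
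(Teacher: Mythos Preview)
Your proposal is correct and follows essentially the same approach as the paper: both reduce the dimension formula to the weighted-entropy supremum from \cite{Yayama}/\cite{GP}/\cite{KP}, then invoke Proposition \ref{sftwentropy} for (\ref{HDformula1})--(\ref{HDformula2}) and the subadditive variational principle (Theorem \ref{CP}) for (\ref{HDformula3}). The only cosmetic difference is that for the identification of measures of full dimension the paper cites Theorem \ref{thm2} together with Proposition \ref{sftwentropy}, whereas you unwind the same computation directly via the identity $h_\mu-\lim_n\frac{1}{n}\int\alpha\phi_n\circ\pi\,d\mu=\tfrac{1}{\alpha+1}(h_\mu+\alpha h_{\pi\mu})-\tfrac{\alpha}{\alpha+1}(M(\pi\mu)-h_\mu)$; this is the content of Theorem \ref{thm2} (with Corollary \ref{nice}) spelled out, so the two arguments coincide.
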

\begin{proof}
Using the proof of Corollary 2.7 \cite{Yayama} and Corollary 3.3
\cite{KP}, we obtain
\begin{equation*}
\begin{split}
\dim_{H}{K} &= \sup \{\dim _{H}\mu: \mu (K)=1, \mu \text { is } T
-\text{invariant and ergodic.}\} (\text{by Theorem 3 \cite{GP}})
\\&=\frac{1}{\log l} \sup_{\mu \in M(X,
\sigma_X)}\{h_{\mu}(\sigma_X)+(\log_{m}l-1)h_{\pi
\mu}(\sigma_Y)\}.
\end{split}
\end{equation*}
Setting $\alpha=\log_{m}l-1$ and applying Proposition
\ref{sftwentropy} establishes equalities (\ref{HDformula1}) and
(\ref{HDformula2}). For equality (\ref{HDformula3}), we use the
variational principle (Theorem \ref{CP}). For the last part, we apply Theorem \ref{thm2} and
Proposition \ref{sftwentropy}.
\end{proof}
\begin{lemma}\label{suba}
Let $(X,\sigma_X),(Y,\sigma_Y)$ be subshifts and $\pi:(X,\sigma_X)
\rightarrow (Y,\sigma_Y)$ a factor map. Let $\alpha>0$. For all $n\in \N$, define $\phi^{'}_n:Y\rightarrow \R$ by $\phi^{'}_n(y)=\log \vert D_n(y)\vert^{1/(\alpha+1)}.$ Then for $n,m\in \N, y\in Y$, $\phi^{'}_{n+m}(y)\leq \phi^{'}_{n}(y)+\phi^{'}_m({\sigma^{m}_{Y}}(y)).$
\end{lemma}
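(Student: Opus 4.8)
The plan is to reduce Lemma~\ref{suba} to a single counting inequality on cylinder sets and then pass to logarithms. Since $\phi'_n(y)=\frac{1}{\alpha+1}\log|D_n(y)|$ and the map $t\mapsto\frac{1}{\alpha+1}\log t$ is strictly increasing on $(0,\infty)$, and since $1\le|D_n(y)|\le S^{n}$ (where $S$ is the number of symbols of $X$: surjectivity of $\pi$ gives at least one cylinder meeting $\pi^{-1}(y)$, and there are at most $S^{n}$ of them, so every $\phi'_n$ is a finite, nonnegative, bounded function and no $-\infty$ issues arise), the assertion is equivalent to
\[
|D_{n+m}(y)|\le|D_n(y)|\cdot|D_m(\sigma^n_Y(y))|\qquad(n,m\in\N,\ y\in Y),
\]
which, since $n$ and $m$ occur symmetrically, is the subadditivity condition in the form stated in the lemma. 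Here I use that $|D_n(y)|$ is precisely the number of distinct blocks $w_1\cdots w_n\in B_n(X)$ with $[w_1\cdots w_n]\cap\pi^{-1}(y)\neq\emptyset$, a quantity independent of the arbitrary choices made when defining the set $D_n(y)$.

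To prove the counting inequality I would fix $n,m,y$, write $\mathcal{C}_k(z)$ for the collection of $k$-cylinders of $X$ meeting $\pi^{-1}(z)$, and exhibit the injection
\[
\iota:\mathcal{C}_{n+m}(y)\longrightarrow\mathcal{C}_n(y)\times\mathcal{C}_m(\sigma^n_Y(y)),\qquad
\iota\bigl([w_1\cdots w_{n+m}]\bigr)=\bigl([w_1\cdots w_n],\,[w_{n+1}\cdots w_{n+m}]\bigr).
\]
For well-definedness, given $[w_1\cdots w_{n+m}]$ meeting $\pi^{-1}(y)$, choose $x$ in the intersection; then $x\in[w_1\cdots w_n]\cap\pi^{-1}(y)$, so the first coordinate lies in $\mathcal{C}_n(y)$, while $\pi(\sigma^n_X x)=\sigma^n_Y(\pi x)=\sigma^n_Y(y)$ and $\sigma^n_X x$ begins with $x_{n+1}\cdots x_{n+m}=w_{n+1}\cdots w_{n+m}$, so the second coordinate lies in $\mathcal{C}_m(\sigma^n_Y(y))$. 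Injectivity is trivial since $w_1\cdots w_{n+m}$ is recovered by concatenation. Counting gives $|D_{n+m}(y)|=|\mathcal{C}_{n+m}(y)|\le|\mathcal{C}_n(y)|\cdot|\mathcal{C}_m(\sigma^n_Y(y))|=|D_n(y)|\cdot|D_m(\sigma^n_Y(y))|$, and applying $\frac{1}{\alpha+1}\log(\cdot)$ yields the lemma. In particular $\Phi'=\{\phi'_n\}_{n=1}^{\infty}$ is a bounded subadditive potential on $(Y,\sigma_Y)$, which is what makes $|D_n(y)|^{1/(\alpha+1)}$ an admissible substitute for $|\pi^{-1}[y_1\cdots y_n]|^{1/(\alpha+1)}$ (via Theorem~\ref{CP}) when $X$ is not assumed to be a shift of finite type.

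I do not expect a genuine obstacle here. The only point that needs care is the verification that the second half-block $w_{n+1}\cdots w_{n+m}$ meets the fiber over the shifted point $\sigma^n_Y(y)$ rather than over $y$ itself; this is exactly where the intertwining identity $\pi\circ\sigma_X=\sigma_Y\circ\pi$ enters, and everything else is elementary combinatorics. (Decomposing instead into the first $m$ and the last $n$ symbols of $w$ runs the same argument with the roles of $n$ and $m$ interchanged, which is the relabelled version of the inequality above.)
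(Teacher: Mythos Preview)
Your proof is correct and follows essentially the same route as the paper: both reduce the claim to the counting inequality $|D_{n+m}(y)|\le |D_n(y)|\cdot|D_m(\sigma_Y^{n}(y))|$ and prove it by splitting an $(n+m)$-block into its length-$n$ prefix and length-$m$ suffix, the paper by grouping points of $D_{n+m}(y)$ according to their first $n$ coordinates and you by the equivalent injection $\iota$ into the product. You are also right to flag the index issue---the paper's own proof establishes the inequality with $\sigma_Y^{n}$ rather than the $\sigma_Y^{m}$ appearing in the displayed statement, so the discrepancy is a typo in the lemma and your handling of it is fine.
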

\begin{proof}
It is enough to show that for $y\in Y, m,n\in \N,$
\begin{equation}\label{dneq}
\vert D_{n+m}(y)\vert \leq \vert D_n(y)\vert \vert D_m(\sigma_Y^{n}(y))\vert.
\end{equation}
\noindent Let $z=z_1\cdots z_n z_{n+1}\cdots z_{n+m}\cdots \in
D_{n+m}(y).$ Then the number of points in $D_{n+m}(y)$ starting
with $z_1\cdots z_n$ is less than or equal to $\vert
D_m(\sigma_Y^n(y))\vert$. Since the number of all possible
distinct $z_1\cdots z_n$ of the points in $D_{n+m}(y)$ is equal to
$\vert D_n(y)\vert,$ we obtain inequality (\ref{dneq}).
\end{proof}
\begin{lemma}\label{Dn}
Let $(X,\sigma_X),(Y,\sigma_Y)$ be subshifts and $\pi:(X,\sigma_X)
\rightarrow (Y,\sigma_Y)$ a factor map.  For all $n\in \N,$ define
$\phi_n^{'}$ as in Lemma \ref{suba} and let
$\Phi^{'}=\{\phi_n^{'}\}^{\infty}_{n=1}.$ Let $S$ be the number of symbols in $X.$ For $0<\epsilon <1/2$,
let $k \geq 2$ such that $1/2^{k}\leq \epsilon < 1/2^{k-1}.$ If $\phi_n^{'}$ is continuous for all $n\in \N,$ then
$$\sum_{y_1 \cdots y_{n+k-2} }\big(\frac{\vert \pi^{-1}[y_1 \cdots y_{n+k-2}]\vert}{S}\big )^{\frac{1}{\alpha+1}} \leq P_{n}(\sigma_Y, \Phi^{'}, \epsilon)\leq \sum_{y_1\cdots y_{n+k-2}}\vert \pi^{-1}[y_1\cdots y_{n+k-2}]\vert ^{\frac{1}{\alpha+1}},$$
where the summations are taken over all allowable words $y_1 \cdots y_{n+k-2}$ of length $(n+k-2)$ in $Y$.
\end{lemma}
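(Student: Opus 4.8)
The plan is to reduce the claimed two–sided estimate to a single pointwise fact about the integers $|D_n(y)|$ as $y$ runs over an $(n+k-2)$–cylinder of $Y$, and to prove that fact by an injection (upper bound) and a pigeonhole–plus–extension construction (lower bound). First I would unwind the separated–set condition: a short computation with the metric $d$ on $Y$, using $1/2^{k}\le\epsilon<1/2^{k-1}$ and $k\ge 2$, shows that $d(\sigma_Y^iy,\sigma_Y^iy')>\epsilon$ holds exactly when $y$ and $y'$ disagree in one of the coordinates $i+1,\dots,i+k-1$; letting $i$ range over $0,\dots,n-1$ and taking the union of these windows, a set $E\subseteq Y$ is $(n,\epsilon)$–separated if and only if its points have pairwise distinct blocks $y_1\cdots y_{n+k-2}$. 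Since $e^{\phi_n'(y)}=|D_n(y)|^{1/(\alpha+1)}$, it therefore suffices to prove, for every allowable word $u=u_1\cdots u_{n+k-2}$ of $Y$: (i) $|D_n(y)|\le|\pi^{-1}[u]|$ for every $y\in[u]$ (which, after summing over the distinct $(n+k-2)$–words realized by an arbitrary separated set, gives the upper estimate for $P_n(\sigma_Y,\Phi',\epsilon)$), and (ii) the existence of one point $y^u\in[u]$ with $|D_n(y^u)|\ge|\pi^{-1}[u]|/S$ (the set $\{y^u:u\in B_{n+k-2}(Y)\}$ is then separated, and using it as the test set $E$ gives the lower estimate).

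The upper bound is immediate. Fix $y\in[u]$; for each $n$–cylinder $[v]$ of $X$ with $[v]\cap\pi^{-1}(y)\neq\emptyset$ choose a point $x\in[v]\cap\pi^{-1}(y)$ and send $[v]$ to the word $x_1\cdots x_{n+k-2}$. Because $\pi$ is a one–block map and $\pi(x)=y$, this word maps to $u$; and because $n\le n+k-2$, distinct cylinders $[v]$ give distinct words. Thus we have an injection of the cylinders counted by $D_n(y)$ into the $(n+k-2)$–words of $X$ lying over $u$, whence $|D_n(y)|\le|\pi^{-1}[u]|$.

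For the lower bound I would choose $y^u$ as follows. List the $|\pi^{-1}[u]|$ allowable $(n+k-2)$–words $w$ of $X$ with $\pi(w)=u$, and sort them by their last symbol $w_{n+k-2}\in\{1,\dots,S\}$; some symbol $s$ occurs as the last symbol of at least $|\pi^{-1}[u]|/S$ of them. Fix any point $\zeta$ of $X$ with $\zeta_1=s$. For each $w$ in this class, $w_1\cdots w_{n+k-2}\,\zeta_2\zeta_3\cdots$ is a point of $X$ (the one transition to check, from $w_{n+k-2}$ to $\zeta_2$, equals the allowable transition from $\zeta_1$ to $\zeta_2$), and its $\pi$–image is the \emph{single} point $y^u:=u_1\cdots u_{n+k-2}\,\pi(\zeta_2)\pi(\zeta_3)\cdots\in[u]$, independent of $w$. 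Hence $w_1\cdots w_n\in D_n(y^u)$ for every $w$ in the class, and collecting the distinct length–$n$ prefixes yields $|D_n(y^u)|\ge|\pi^{-1}[u]|/S$. Combining (i) and (ii) and summing over all allowable $(n+k-2)$–words $u$ proves the lemma.

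The hardest point is the very last step when $k>2$: two distinct preimage words $w$ in the chosen class may agree on their first $n$ coordinates and differ only in coordinates $n+1,\dots,n+k-2$, so passing from words to distinct length–$n$ prefixes can cost more than the single factor $S$ that is claimed. For $k=2$ no such collision is possible (the word $w$ has length $n$ and is its own prefix), so the argument above is complete as written; for $k>2$ one must refine the sorting — for instance also pigeonholing on the coordinates $w_{n+1}\cdots w_{n+k-2}$, on which the prefix map is injective, and using the irreducibility of $X$ to keep the retained words extendable to a common image of $[u]$ — in order to recover the stated bound. I remark that for the intended application (Theorem \ref{HDDFORSFT}) only the exponential growth rate of $P_n(\sigma_Y,\Phi',\epsilon)$ is used, so any bound with a sub-exponential multiplicative error in place of $S$ would already suffice; and the hypothesis that each $\phi_n'$ is continuous is what makes $\Phi'$ a genuine subadditive potential of continuous functions, so that $P_n(\sigma_Y,\Phi',\epsilon)$ and its limit are the objects appearing in Theorem \ref{CP}.
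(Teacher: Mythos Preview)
Your approach mirrors the paper's almost exactly: both prove the upper bound by observing that points of an $(n,\epsilon)$-separated set lie in distinct $(n+k-2)$-cylinders of $Y$ and that $|D_n(y)|\le|\pi^{-1}[y_1\cdots y_{n+k-2}]|$, and both attack the lower bound by pigeonholing the preimage words of a fixed block $u=y_1\cdots y_{n+k-2}$ on their $(n+k-2)$th symbol and extending the words in the most popular class by a common tail to manufacture a single point $z\in[u]$ with many preimage cylinders.

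Your diagnosis of the difficulty for $k>2$ is correct and in fact shows the inequality, as stated, fails once $k\ge 4$. Take $X=\{1,2\}^{\N}$, $Y$ the one-point shift, and $\pi$ the unique factor map; then $S=2$, $P_n(\sigma_Y,\Phi',\epsilon)=|D_n(y)|^{1/(\alpha+1)}=2^{n/(\alpha+1)}$, while the claimed lower bound equals $(2^{n+k-2}/2)^{1/(\alpha+1)}=2^{(n+k-3)/(\alpha+1)}$, strictly larger when $k\ge4$. The paper's own proof hides this by writing $|D_{n+k-2}(z)|$ rather than $|D_n(z)|$: with that index the pigeonhole step does give $|D_{n+k-2}(z)|\ge|\pi^{-1}[u]|/S$, but the resulting sum $\sum_{z\in E}|D_{n+k-2}(z)|^{1/(\alpha+1)}$ majorizes $P_n(\sigma_Y,\Phi',\epsilon)$, which is the wrong direction for a lower bound. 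So the gap you flag is shared by the paper. Your remark that only the exponential growth rate matters for Theorem~\ref{application1} is exactly the right salvage: pigeonholing additionally on coordinates $n+1,\dots,n+k-2$ costs a factor $S^{k-2}$, constant for fixed $\epsilon$, and that weaker inequality already yields the formula used downstream.

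One further caution: your extension step ``$w_1\cdots w_{n+k-2}\,\zeta_2\zeta_3\cdots\in X$ because the single transition $w_{n+k-2}\to\zeta_2$ agrees with $\zeta_1\to\zeta_2$'' presumes $X$ is a one-step shift of finite type, whereas the lemma is stated for arbitrary subshifts; the paper's proof makes the same implicit concatenation without comment. Your suggested appeal to irreducibility does not help here either, since the hypotheses include neither irreducibility nor finite type. This is not a divergence from the paper but a second gap that both arguments share.
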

\begin{proof}
Using Lemma \ref{suba}, $\Phi^{'}$ is a subadditive potential on $(Y, \sigma_Y)$ if $\phi_n^{'}$ is continuous for all $n\in \N.$
Let $\epsilon >0$ be fixed. Take an $(n,\epsilon)$ separated subset $A$ of $Y$. If $x,y \in A,$ then there exists $1\leq i \leq
n+k-2$ such that $x_i \neq y_i.$ Thus
$$\sum_{y\in A}\vert D_{n+k-2}(y)\vert^{1/(\alpha +1)}\leq \sum_{y_1 \cdots y_{n+k-2}\in B_{n+k-2}(Y)}\vert \pi^{-1}[y_1 \cdots y_{n+k-2}]\vert ^{1/(\alpha +1)}.$$
Let $[i_1\cdots i_{n+k-2}]$ be a cylinder in $Y$ and let $y\in  [i_1 \cdots i_{n+k-2}].$
Then the number of
possible choices of symbols in the $(n+k-2)$ th position of a point in
$E_{n+k-2}(y)\leq S.$  Therefore, there
exists a symbol $a$ in the $(n+k-2)$ th position of a point in $ E_{n+k-2}(y)$ such that
\begin{equation*}
\begin{split}
\frac {\vert \pi^{-1}[i_1 \cdots i_{n+k-2} ]\vert}{S}& \leq
\textnormal{the total number of }a \textnormal{ that appears in
the } (n+k-2) \textnormal{ th}\\& \textnormal{ position of points
in }E_{n+k-2}(y) .
\end{split}
\end{equation*}
Take $x \in E_{n+k-2}(y)$ such that
$x_{n+k-2}=a.$ Let $\pi(x)=z.$ Then
$$\frac {\vert \pi^{-1}[i_1  \cdots i_{n+k-2}]\vert}{S} \leq \vert D_{n+k-2}(z)\vert.$$
Now let $E$ be the set obtained by taking one such $z$ from each distinct cylinder $[i_1 \cdots i_{n+k-2}]$ in $Y$. Then $E$ is an $(n,\epsilon)$ separated subset of $Y$ and
\begin{equation*}
\sum_{y_1 \cdots y_{n+k-2} \in B_{n+k-2}(Y)}\big(\frac{\vert \pi^{-1}[y_1 \cdots y_{n+k-2}]\vert}{S}\big)^{1/(\alpha+1)} \leq \sum_{y\in E}\vert D_{n+k-2}(y)\vert^{{1}/(\alpha+1)}.
\end{equation*}
Now Lemma \ref{Dn} is proved.
\end{proof}

Now we consider a sequence of real-valued functions $\Phi=\{\phi_n\}_{n=1}^{\infty}$ on $(X, \sigma_X)$ such that $\Phi$ satisfies the subadditivity condition but $\phi_n$ is merely Borel measurable on $X$. Then $-\Phi$ is a sequence of Borel measurable functions on $(X, \sigma_X)$ such that $-\Phi$ satisfies the superadditivity condition. We define the equilibrium states for $\Phi$ and $-\Phi$ in the same manner as we did for subadditive potentials and superadditve potentials (see pages \pageref{defesp} and \pageref{defespp}).
\begin{theorem}\label{application1}
Let $(X,\sigma_X),(Y,\sigma_Y)$ be subshifts and $\pi:(X,\sigma_X)
\rightarrow (Y,\sigma_Y)$ a factor map.  For $l,m\in \N, l>m\geq 2
,$ set $\alpha=\log _{m}l-1.$ For all $n\in\N,$ define $\phi_n$
and $\Phi$ as in Theorem \ref{HDDFORSFT} and $\phi_n^{'}$ and
$\Phi^{'}$ as in Lemma \ref{Dn}.  Then the Hausdorff dimension of
a subshift-NC carpet $K$ corresponding to $(X,Y,\pi)$ is given by
\begin{align}
\dim_{H}K &=\frac{1}{\log m}\sup_{\nu \in M(Y,\sigma_Y)} \{h_{\nu}(\sigma_Y)+\lim_{n\rightarrow \infty}\frac{1}{n}\int \phi_n^{'} d\nu\} \label{HDformula1.1}\\=& \frac{1}{\log m}\sup_{\mu \in M(X,\sigma_X)} \{h_{\mu}(\sigma_X)-\lim_{n\rightarrow \infty}\frac{1}{n}\int \alpha \phi_n^{'}\circ\pi d\mu\}\label{HDformula1.2}.
\end{align}
The $T$-invariant ergodic measures of full dimension for $K$ are
the ergodic equilibrium states for $-\alpha \Phi^{'}\circ \pi$. In
addition, if $\phi_n^{'}$ is continuous for all $n\in \N$, then
\begin{enumerate}
\item $\dim_{H}K =(1/\log m) \limsup_{n\rightarrow \infty} (1/n) \log
(\sum_{y_1 \cdots y_n \in B_n(Y)}  \vert \pi^{-1}[y_1\cdots
y_n]\vert^{1/(\alpha+1)})$.\label{subshiftwc}\\
\item $M_{\Phi'}(Y,\sigma_Y)\subseteq M_{\Phi}(Y,\sigma_Y)$.\label{compare}
In particular, if $ M_{\Phi}(Y,\sigma_Y) $ consists of one point, then $M_{\Phi'}(Y,\sigma_Y)= M_{\Phi}(Y,\sigma_Y)$.\\
\item  If $M_{\Phi}(Y,\sigma_Y)$ consists of one point, then  the $T$-invariant ergodic measures of full dimension for $K$ are the ergodic equilibrium states for $ -\alpha \Phi\circ \pi$.\label{nicesubshift}
\end{enumerate}
\end{theorem}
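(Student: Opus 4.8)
The plan is to reduce everything to the tools already assembled. First I would establish the variational/dimension formulas \eqref{HDformula1.1} and \eqref{HDformula1.2}. Exactly as in the proof of Theorem \ref{HDDFORSFT}, one starts from the combination of Corollary 2.7 in \cite{Yayama}, Corollary 3.3 in \cite{KP} and Theorem 3 in \cite{GP} to get
$\dim_H K = (1/\log l)\sup_{\mu\in M(X,\sigma_X)}\{h_\mu(\sigma_X)+(\log_m l-1)h_{\pi\mu}(\sigma_Y)\}$;
this step is purely a matter of checking that those results hold for a subshift symbolic representation, not only for an SFT one (the carpet is a subshift-NC carpet). Setting $\alpha=\log_m l-1$, I would then apply Theorem \ref{wentropy}, which gives
$\sup_{\mu}\{h_\mu(\sigma_X)+\alpha h_{\pi\mu}(\sigma_Y)\}=(\alpha+1)\sup_{m}\{h_m(\sigma_Y)+\tfrac{1}{\alpha+1}\int F\,dm\}$
with $F=P(\sigma_X,\pi,0)$. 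By Theorem \ref{aboutP}, $F(y)=\limsup_n (1/n)\log|D_n(y)|$, so $\tfrac{1}{\alpha+1}\int F\,dm=\int \limsup_n(1/n)\phi_n'\,dm$; since $\Phi'=\{\log|D_n(y)|^{1/(\alpha+1)}\}$ is superadditive along $\sigma_Y$ (Lemma \ref{suba} gives subadditivity of $\phi_n'$ — wait, Lemma \ref{suba} shows $\phi_{n+m}'(y)\le\phi_n'(y)+\phi_m'(\sigma^n_Y y)$, i.e. $\Phi'$ is subadditive), Kingman's Subadditive Ergodic Theorem identifies $\int F\,dm/(\alpha+1)$ with $\lim_n(1/n)\int\phi_n'\,dm$ for $\sigma_Y$-invariant $m$. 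This yields \eqref{HDformula1.1}, and \eqref{HDformula1.2} follows from the equality of \eqref{eq1} and \eqref{eq2} in Theorem \ref{wentropy} together with the same Kingman argument applied to $\alpha\phi_n'\circ\pi$ on $(X,\sigma_X)$. The statement that the $T$-invariant ergodic measures of full dimension are the ergodic equilibrium states for $-\alpha\Phi'\circ\pi$ then comes from translating the description $K_\alpha=M_{-(\alpha/(\alpha+1))F}(X,\sigma_X)$ in Theorem \ref{wentropy} through the same Kingman identification, noting that full-dimension ergodic measures are exactly the ergodic members of $K_\alpha$.

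For the three numbered items I would assume $\phi_n'$ continuous. Item \eqref{subshiftwc}: now $\Phi'$ is a genuine subadditive potential on $(Y,\sigma_Y)$, so by the variational principle for subadditive potentials (Theorem \ref{CP}), the supremum in \eqref{HDformula1.1} equals $P(\sigma_Y,\Phi')$, which is computed from separated sets. Lemma \ref{Dn} sandwiches $P_n(\sigma_Y,\Phi',\epsilon)$ between $\sum_{y_1\cdots y_{n+k-2}}(|\pi^{-1}[y_1\cdots y_{n+k-2}]|/S)^{1/(\alpha+1)}$ and $\sum_{y_1\cdots y_{n+k-2}}|\pi^{-1}[y_1\cdots y_{n+k-2}]|^{1/(\alpha+1)}$; taking $(1/n)\log$, letting $n\to\infty$ and then $\epsilon\to 0$ (so $k\to\infty$, but the index shift $n\mapsto n+k-2$ and the bounded factor $S^{-1/(\alpha+1)}$ wash out in the exponential rate) identifies $P(\sigma_Y,\Phi')$ with $\limsup_n(1/n)\log\sum_{y_1\cdots y_n\in B_n(Y)}|\pi^{-1}[y_1\cdots y_n]|^{1/(\alpha+1)}$, giving the formula after dividing by $\log m$.

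Item \eqref{compare}: both $\Phi$ and $\Phi'$ are subadditive potentials on $(Y,\sigma_Y)$, and from $|D_n(y)|\le|\pi^{-1}[y_1\cdots y_n]|$ we get $\phi_n'\le\phi_n$ pointwise, hence $\lim_n(1/n)\int\phi_n'\,d\nu\le\lim_n(1/n)\int\phi_n\,d\nu$ for every $\nu\in M(Y,\sigma_Y)$. By Theorem \ref{aboutP} and Theorem \ref{iPS} — the latter is the key input, valid because $(X,\sigma_X)$ is an irreducible SFT — these two limits in fact agree for every $\nu$: $\limsup_n(1/n)\log|D_n(y)|=\limsup_n(1/n)\log|\pi^{-1}[y_1\cdots y_n]|$ $\nu$-a.e., and integrating (using Kingman on both sides) gives $\lim_n(1/n)\int\phi_n'\,d\nu=\lim_n(1/n)\int\phi_n\,d\nu$. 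Therefore $h_\nu(\sigma_Y)+\lim_n(1/n)\int\phi_n'\,d\nu=h_\nu(\sigma_Y)+\lim_n(1/n)\int\phi_n\,d\nu$ for all $\nu$, so the two functionals have the same maximizers: $M_{\Phi'}(Y,\sigma_Y)=M_\Phi(Y,\sigma_Y)$ — in particular $M_{\Phi'}\subseteq M_\Phi$, and when $M_\Phi$ is a singleton they coincide. Item \eqref{nicesubshift}: if $M_\Phi(Y,\sigma_Y)=\{m\}$, then by item \eqref{compare} also $M_{\Phi'}(Y,\sigma_Y)=\{m\}$; by the already-proved characterization the full-dimension ergodic measures of $K$ are the ergodic equilibrium states of $-\alpha\Phi'\circ\pi$, i.e. (by Theorem \ref{thm2}, since $\Phi$ is now a genuine subadditive potential) the ergodic $\mu$ with $\pi\mu=m$ that are of maximal relative entropy over $m$; but these are precisely the ergodic equilibrium states of $-\alpha\Phi\circ\pi$, again by Theorem \ref{thm2} applied with the potential $\Phi$.

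The main obstacle I anticipate is item \eqref{compare}, specifically the honest use of Theorem \ref{iPS}: one must be careful that the a.e.\ equality of the two $\limsup$-growth rates survives integration and the passage to the Kingman limit uniformly over all $\nu\in M(Y,\sigma_Y)$, not just a fixed one, so that the two entropy-plus-potential functionals really coincide as functions on $M(Y,\sigma_Y)$. Everything else is a matter of assembling Theorems \ref{wentropy}, \ref{thm2}, \ref{CP}, \ref{aboutP}, \ref{iPS} and Lemmas \ref{suba}, \ref{Dn} in the right order together with Kingman's Subadditive Ergodic Theorem.
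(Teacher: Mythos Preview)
Your treatment of the main dimension formulas \eqref{HDformula1.1}--\eqref{HDformula1.2} and of item \eqref{subshiftwc} matches the paper's approach and is correct: Theorem \ref{wentropy} plus Lemma \ref{suba} and Kingman handle the formulas, and Lemma \ref{Dn} together with Theorem \ref{CP} handles \eqref{subshiftwc}.

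There is, however, a genuine gap in your argument for item \eqref{compare}. You invoke Theorem \ref{iPS} to show that $\lim_n(1/n)\int\phi_n'\,d\nu=\lim_n(1/n)\int\phi_n\,d\nu$ for every $\nu$, and you explicitly justify this by saying ``valid because $(X,\sigma_X)$ is an irreducible SFT.'' But in Theorem \ref{application1} the hypothesis is only that $(X,\sigma_X)$ is a \emph{subshift}; the irreducible SFT case is Theorem \ref{HDDFORSFT}. Theorem \ref{iPS} is simply not available here, and without it you have no reason for the two integrals to agree measure by measure. (Incidentally, your argument would give the stronger conclusion $M_{\Phi'}(Y,\sigma_Y)=M_\Phi(Y,\sigma_Y)$ in general, which the paper does not claim.)

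The paper avoids this by comparing pressures rather than integrals. From \eqref{HDformula1.1} and item \eqref{subshiftwc} one has
\[
\sup_{\nu}\Big\{h_\nu(\sigma_Y)+\lim_n\tfrac{1}{n}\int\phi_n'\,d\nu\Big\}
=\limsup_n\tfrac{1}{n}\log\sum_{y_1\cdots y_n\in B_n(Y)}|\pi^{-1}[y_1\cdots y_n]|^{1/(\alpha+1)},
\]
and the right-hand side is exactly $P(\sigma_Y,\Phi)$ (since $\phi_n$ is constant on $n$-cylinders), which by Theorem \ref{CP} equals $\sup_\nu\{h_\nu(\sigma_Y)+\lim_n(1/n)\int\phi_n\,d\nu\}$. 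So the two suprema coincide. Now the pointwise inequality $\phi_n'\le\phi_n$ gives: if $\nu\in M_{\Phi'}$ then $h_\nu+\lim_n(1/n)\int\phi_n\,d\nu\ge h_\nu+\lim_n(1/n)\int\phi_n'\,d\nu=P(\sigma_Y,\Phi')=P(\sigma_Y,\Phi)$, forcing $\nu\in M_\Phi$. This yields $M_{\Phi'}\subseteq M_\Phi$; equality when $M_\Phi$ is a singleton follows since $M_{\Phi'}\neq\emptyset$. Your item \eqref{nicesubshift} then goes through as written once \eqref{compare} is established this way.
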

\begin{proof}
We fist notice that the map $y\rightarrow \log \vert D_n(y)\vert$
is Borel measurable for each $n\geq 1,$ by the proof of Lemma 3.3
(with $f=0$) in \cite{LW}. Equality (\ref{HDformula1.1}) is clear
from the proof of (\ref{HDformula1}) in Theorem \ref{HDDFORSFT}.
Equality (\ref{HDformula1.2}) follows by combining Theorem
\ref{wentropy}, Lemma \ref{suba} and Subadditive Ergodic Thereom.
Applying Theorem \ref{wentropy}, the $T$-invariant ergodic
measures of full dimension for $K$ are the ergodic equilibrium
states for $-\alpha \Phi^{'}\circ \pi$. By Lemma \ref{Dn},
\begin{equation*}
P(\sigma_Y, \Phi^{'}, \epsilon)=\limsup_{n\rightarrow
\infty}\frac{1}{n}\log P_n(\sigma_Y, \Phi^{'}, \epsilon)
=\limsup_{n \rightarrow \infty}\frac{1}{n}\log \big (\sum_{y_1
\cdots y_n \in B_n(Y)}\vert \pi^{-1}[y_1\cdots y_n]\vert
^{1/(\alpha+1)}\big).
\end{equation*}
Using the variational principle, the
above implies (\ref{subshiftwc}). For (\ref{compare}), we notice
by (\ref{subshiftwc}) that if $\phi'_{n}$ is continuous for all
$n\in \N$, then
$$\sup_{\nu \in M(Y,\sigma_Y)}\{h_{\nu}(\sigma_Y)+\lim_{n\rightarrow \infty}\frac{1}{n}\int \phi_n^{'} d\nu\}= \sup_{\nu \in M(Y,\sigma_Y)}\{h_{\nu}(\sigma_Y)+\lim_{n\rightarrow \infty}\frac{1}{n}\int \phi_n  d\nu\}.$$
Using $\phi'_n\leq \phi_n$ for all $n\in\N$, we obtain $M_{\Phi'}(Y,\sigma_Y)\subseteq M_{\Phi}(Y,\sigma_Y).$ Since $M_{\Phi'}(Y,\sigma_Y) \neq \emptyset,$ if
 $M_{\Phi}(Y,\sigma_Y)$ consists of one point, then $M_{\Phi'}(Y,\sigma_Y)= M_{\Phi}(Y,\sigma_Y).$ (\ref{nicesubshift}) follows by
 combining Theorem \ref{thm2} and Theorem \ref{wentropy}.
\end{proof}
Next we want to know whether or not there is a unique invariant
ergodic measure of full dimension and to study the properties of
the unique measures. In \cite{Yayama}, uniqueness and the
properties of the unique measure were studied for some SFT-NC
carpets for which continuous saturated compensation functions
exist. In order to generalize these results, we follow the proofs
of Lemmas 1 and 2 and Theorem 5 in \cite{B2006}.

Throughout the rest of this section, we assume that $(X,
\sigma_X)$ and  $(Y, \sigma_X)$ are topologically mixing shifts of
finite type. Let $A$ be the transition matrix for $X$ and $M$ the
smallest integer such that $A^{M}>0.$ Let $\alpha>0.$  Define
$K=\sum_{i_1 \cdots i_M \in B_{M}(Y)}\vert \pi^{-1}[i_1 \cdots
i_M]\vert^{1/(\alpha+1)}$. For $n\in \N,$ define $S_n=
\sum_{i_1\cdots i_n \in B_{n}(Y)} \vert\pi^{-1}[i_1 \cdots
i_n]\vert^{1/(\alpha+1)}.$   Define ${\phi_n}:Y\rightarrow \R$ by
${\phi_n}(y)= \log \vert \pi^{-1}[y_1 \cdots y_n]
\vert^{1/(\alpha+1)}$ for all $n\in \N$ and
$\Phi=\{\phi_n\}^{\infty}_{n=1}.$ Then $\Phi$ is a subadditive
potential on $(Y, \sigma_Y)$ with bounded variation. $\Phi$ is
almost additive if and only if  for any allowable word $i_1 \cdot
\cdot \cdot i_n j_1 \cdots j_l$ of length $(n+l)$ in $Y$, $n,l\in
\N$,  there exist $K_1, K_2>0$ such that
\begin{align} \label{aadditiveinourcase}
K_1\leq \frac{\vert \pi^{-1}[i_{1}\cdots i_{n}j_{1}\cdots
j_{n}\vert]}{\vert \pi^{-1}[i_1 \cdots i_n]\vert
\vert\pi^{-1}[j_1\cdots j_l]\vert} \leq K_2,
\end{align}
We notice that $\Phi$ is not always almost additive. Therefore the
hypothesis of Theorem \ref{BM} is not satisfied. Nevertheless, we
will show that there is a unique equilibrium state for $\Phi$
which is Gibbs and mixing. We use the approach in \cite{B2006} to
finding uniqueness of the equilibrium state for an almost
subadditive potential. We prove that Lemmas 1 and 2, and therefore
Theorem 5 \cite{B2006} still hold for our subadditive potential
$\Phi$ on $(Y,\sigma_Y)$. For all $n\in \N$, let $A_n$ be a set
consisting exactly one point from each cylinder of length $n$ in
$Y$. Define the Borel probability measure $\nu_n$ on $Y$
concentrated on $A_n$ by
$$\nu_n=\frac{\sum_{y\in A_n}e^{\phi_n(y)}\delta_{y}}{\sum_{y\in A_n}e^{\phi_{n}(y)}},$$
where $\delta_{y}$ is the Dirac measure at $y$. Then, for each
cylinder $[i_1\cdots i_n]$ of length $n$ in $Y$,
$$\nu_n([i_1\cdots i_n])=\frac{\vert \pi^{-1}[i_1 \cdots i_n]\vert^{1/(\alpha+1)}}{\sum_{j_1\cdots j_n\in B_n(Y)}  \vert\pi^{-1}[j_1 \cdots j_n]\vert^{1/(\alpha+1)}}.$$
Since $\nu_n$ is a Borel probability measure on $Y$ for all $n\in
\N$, there exists a subsequence $\{\nu_{n_k}\}_{k=1}^{\infty}$
that converges to a Borel probability measure $\nu$ on $Y$ in the
weak* topology. In the following lemmas, let $K, S_n, \Phi$ and $
\nu_n$ be defined as above.
\begin{lemma}\label{key2}
For all $n\in \N$, there exist $K_1, K_2>0$ such that $K_1\leq
{e^{nP(\sigma_Y, \Phi)}}/{S_n}\leq K_2.$
\end{lemma}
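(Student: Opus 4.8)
The plan is to prove both inequalities by comparing $S_n$ with the multiplicative growth rate $e^{nP(\sigma_Y,\Phi)}$, exploiting that $\{S_n\}$ is submultiplicative and, because $(X,\sigma_X)$ is topologically mixing, almost supermultiplicative. First I would record submultiplicativity: if an allowable word $x_1\cdots x_{n+m}$ of $X$ is mapped by $\pi$ to $i_1\cdots i_{n+m}$, then $x_1\cdots x_n$ and $x_{n+1}\cdots x_{n+m}$ are allowable in $X$, are mapped to $i_1\cdots i_n$ and $i_{n+1}\cdots i_{n+m}$, and together determine $x_1\cdots x_{n+m}$; hence $\vert\pi^{-1}[i_1\cdots i_{n+m}]\vert\le\vert\pi^{-1}[i_1\cdots i_n]\vert\,\vert\pi^{-1}[i_{n+1}\cdots i_{n+m}]\vert$. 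Raising to the power $1/(\alpha+1)$ and summing over all $(n+m)$-blocks of $Y$ (whose two halves lie in $B_n(Y)$ and $B_m(Y)$) gives $S_{n+m}\le S_nS_m$. Thus $\{\log S_n\}$ is subadditive, so $\lim_n\frac1n\log S_n=\inf_n\frac1n\log S_n$, and this limit equals $P(\sigma_Y,\Phi)$, exactly as in the derivation of equality (\ref{HDformula3}) (by Theorem \ref{CP} together with the comparison between the separated-set pressure and $\frac1n\log S_n$ used in Lemma \ref{Dn}). In particular $S_n\ge e^{nP(\sigma_Y,\Phi)}$ for every $n$, which already gives the upper bound with $K_2=1$.

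The crux is the reverse estimate, and here topological mixing of $X$ enters. Fix allowable $Y$-words $u$ of length $n$ and $v$ of length $m$. For every pair $(\xi,\eta)$ with $\xi$ an allowable $X$-word over $u$ and $\eta$ an allowable $X$-word over $v$, the relation $A^{M}>0$ produces an allowable $X$-word $z(\xi,\eta)$ of length $M-1$ such that $\xi\,z(\xi,\eta)\,\eta$ is allowable in $X$; there are $\vert\pi^{-1}[u]\vert\,\vert\pi^{-1}[v]\vert$ such pairs, and $\pi(z(\xi,\eta))$ ranges over the finite set $B_{M-1}(Y)$. By the pigeonhole principle there is a fixed $(M-1)$-block $w=w(u,v)$ of $Y$ attained by at least $\vert\pi^{-1}[u]\vert\,\vert\pi^{-1}[v]\vert/\vert B_{M-1}(Y)\vert$ of these pairs, and since $(\xi,\eta)\mapsto\xi\,z(\xi,\eta)\,\eta$ is injective we get $\vert\pi^{-1}[u\,w(u,v)\,v]\vert\ge\vert\pi^{-1}[u]\vert\,\vert\pi^{-1}[v]\vert/\vert B_{M-1}(Y)\vert$. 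Because $(u,v)\mapsto u\,w(u,v)\,v$ is also injective (the first $n$ and the last $m$ symbols recover $u$ and $v$), summing over all such $u,v$ yields
\[
S_{n+m+M-1}\ \ge\ \sum_{u,v}\vert\pi^{-1}[u\,w(u,v)\,v]\vert^{1/(\alpha+1)}\ \ge\ \vert B_{M-1}(Y)\vert^{-1/(\alpha+1)}\,S_n\,S_m ,
\]
and since $\vert B_{M-1}(Y)\vert\le\vert B_{M}(Y)\vert\le K$ this also holds with the constant $K^{-1/(\alpha+1)}$.

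To finish I would run a Fekete-type argument on this pair of inequalities. Set $d=M-1$ and $c=K^{-1/(\alpha+1)}$, and for $n>d$ put $U_n=c\,S_{n-d}$; the second inequality reads $U_{n+m}\ge U_nU_m$, so $\{\log U_n\}$ is superadditive and $\lim_n\frac1n\log U_n=\sup_n\frac1n\log U_n$; since $\frac1n\log U_n\to P(\sigma_Y,\Phi)$ this forces $\log U_n\le nP(\sigma_Y,\Phi)$, i.e.\ $S_m\le c^{-1}e^{(m+d)P(\sigma_Y,\Phi)}$ for all large $m$. Absorbing the finitely many small indices into the constant gives $e^{nP(\sigma_Y,\Phi)}/S_n\ge K_1$ with, say, $K_1=K^{-1/(\alpha+1)}e^{-(M-1)P(\sigma_Y,\Phi)}$. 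The main obstacle is the middle step: one must make the gluing \emph{uniform}, that is, obtain a constant in $S_{n+m+M-1}\ge c\,S_nS_m$ that is independent of $n$ and $m$, which requires both the pigeonhole over the $Y$-image of the connecting block and the two injectivity observations that prevent distinct $X$-words (respectively $Y$-words) from being overcounted.
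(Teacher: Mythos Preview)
Your argument is correct and reaches the same conclusion as the paper, but the supermultiplicativity step is obtained by a genuinely different device. For the upper bound $K_2=1$ the two proofs coincide. For the lower bound, the paper fixes $u\in B_n(Y)$ and $v\in B_{l-M}(Y)$ and sums over \emph{all} connecting $Y$-blocks $a$ of length $M$: since $\sum_a|\pi^{-1}[uav]|\ge|\pi^{-1}[u]|\,|\pi^{-1}[v]|$ and since $\sum_i a_i^{p}\ge(\sum_i a_i)^{p}$ for $0<p\le1$, one gets $\sum_a|\pi^{-1}[uav]|^{1/(\alpha+1)}\ge(|\pi^{-1}[u]|\,|\pi^{-1}[v]|)^{1/(\alpha+1)}$, and summing over $u,v$ gives $S_{l+n}\ge S_nS_{l-M}$ directly; the easy estimate $S_l\le KS_{l-M}$ then converts this into $S_{l+n}\ge S_lS_n/\widetilde K$ with no shift, so the standard Fekete lemma applies immediately. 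You instead select a \emph{single} connector $w(u,v)$ by pigeonhole and rely on the two injectivity observations to sum. The paper's route is shorter (no pigeonhole, no injectivity bookkeeping) and yields an unshifted supermultiplicative inequality; your route is more combinatorial and does not use the concavity of $t\mapsto t^{1/(\alpha+1)}$, so it would survive for exponents larger than $1$. One small technical point: your superadditivity $\log U_{n+m}\ge\log U_n+\log U_m$ only holds for $n,m\ge d+1$, so the textbook Fekete lemma does not apply verbatim; the clean way to finish is to iterate your inequality to $S_{k(n+d)-d}\ge c^{k-1}S_n^{k}$ and let $k\to\infty$, which gives $(n+d)P(\sigma_Y,\Phi)\ge\log c+\log S_n$ and hence $e^{nP(\sigma_Y,\Phi)}/S_n\ge c\,e^{-dP(\sigma_Y,\Phi)}$ for every $n\ge1$ with no exceptional indices to absorb.
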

\begin{proof}
Notice that $P(\sigma_Y,\Phi)=\limsup_{n\rightarrow \infty}
(\log S_n)/{n}$ by the subadditive topological pressure of $\Phi.$
We will first find $K_2>0$. Since
\begin{equation*}
\begin{split}
&\sum_{i_1\cdots i_n j_1 \cdots j_l \in B_{n+l}(Y)}
\vert\pi^{-1}[i_1 \cdots i_n j_1 \cdots j_l
]\vert^{1/(\alpha+1)}\\&\leq \sum_{i_1\cdots i_n \in B_n(Y)}
\vert\pi^{-1}[i_1 \cdots i_n]\vert^{1/(\alpha+1)} \sum_{j_1\cdots
j_l \in B_{l}(Y)}  \vert\pi^{-1}[j_1 \cdots
j_l]\vert^{1/(\alpha+1)},\label{trivial}
\end{split}
\end{equation*}
$\{\log S_n\}_{n=1}^{\infty}$ is subadditive. Therefore,
$$P(\sigma_Y, \Phi)=\lim_{n\rightarrow \infty}\frac{\log S_n}{n}\leq \frac{\log S_n}{n}
\textnormal { for all  } n\geq 1,$$ and so ${e^{nP(\sigma_Y,
\Phi)}}/{S_n}\leq 1.$ Set $K_2=1.$ Now let $l>M$. To find $K_1>0$,
we first show that $S_{l+n}\geq S_{n} S_{l-M}.$ Given any two
allowable words in $Y$, $i_{1} \cdots i_{n}$ of length $n$,
$j_{1}\cdots j_{l-M}$ of length $(l-M),$ there exists an allowable
word $a_1 \cdots a_M$ of length $M$ in $Y$ such that $i_1 \cdots
i_n a_1 \cdots a_M j_1 \cdots j_{l-M}$ is an allowable word of
length $(l+n)$ in $Y.$ For fixed allowable words $i_1\cdots i_n$
of length $n$ and $j_1 \cdots j_{l-M}$ of length $(l-M),$
$$\sum_{i_1\cdots i_n a_1 \cdots a_M j_1 \cdots j_{l-M}\in B_{n+l}(Y)}\vert \pi^{-1}[i_1\cdots i_n a_1\cdots a_M j_1 \cdots j_{l-M}]\vert \geq \vert \pi^{-1}[i_1\cdots i_n]\vert  \vert \pi^{-1}[j_1\cdots j_{l-M}]\vert.$$
Therefore,
\begin{equation*}
\begin{split}
& \sum_{i_1\cdots i_n a_1\cdots a_M j_1 \cdots j_{l-M}\in
B_{n+l}(Y)}\vert \pi^{-1}[i_1\cdots i_n a_1 \cdots a_M j_1 \cdots
j_{l-M}]\vert^{1/(\alpha+1)}\\ & \geq \big(\sum_{i_1\cdots i_n
a_1\cdots a_M j_1 \cdots j_{l-M}\in B_{n+l}(Y)}\vert
\pi^{-1}[i_1\cdots i_n a_1\cdots a_M j_1 \cdots
j_{l-M}]\vert \big)^{1/(\alpha+1)}\\& \geq \vert \pi^{-1}[i_1\cdots
i_n]\vert ^{1/(\alpha+1)} \vert \pi^{-1}[j_1\cdots
j_{l-M}]\vert^{1/(\alpha+1)}.
\end{split}
\end{equation*}
Summing over all allowable words $i_1\cdots i_n$ of length $n$ in
$Y$ and $j_1\cdots j_{l-M}$ of length $(l-M)$ in $Y$, we obtain
\begin{align}\label{key2-1}
S_{l+n}\geq S_n S_{l-M}.
\end{align}
Now we show $S_l\leq K S_{l-M}.$ For any allowable word
$i_{M+1}\cdots i_l$ in $Y,$ there exists $a_1 \cdots a_M$ such
that $ a_1 \cdots a_M i_{M+1}\cdots i_l$ is an allowable word of
length $l$ in $Y.$ Let $i_{M+1} \cdots i_{l}$ be fixed. Then
$$ \sum_{i_1 \cdots i_M i_{M+1} \cdots i_{l}\in B_{l}(Y)} \vert \pi^{-1}[i_1 \cdots  i_M i_{M+1} \cdots i_{l}]\vert^{1/(\alpha+1)}  \leq
K \vert \pi^{-1}[i_{M+1}\cdots i_{l}]\vert^{1/(\alpha+1)}.$$
Summing all allowable word $i_{M+1}\cdots i_{l}$ of length $(l-M)$
in $Y$, we get
\begin{align}\label{key2-2}
S_l\leq KS_{l-M}.
\end{align}
By inequalities (\ref{key2-1}) and (\ref{key2-2}), $S_{l+n}\geq
S_{l}S_{n}/K$ for $l>M, n\geq 1$. For $l+n\leq 2M,$ we can find $K'
\in \N$ such that $S_{l+n}\geq
S_{l}S_{n}/K',$ because there are only finitely many choices of $(l,n)$ such that $l+n\leq 2M.$ Set $\widetilde K=\max\{K, K'\}$. Then $\{\log (S_n/\widetilde{K})\}_{n=1}^{\infty}$ is
superadditive. Thus
\begin{equation*}
\lim_{n\rightarrow \infty}\frac{1}{n}\log{S_n} =\lim_{n\rightarrow
\infty}\frac{1}{n}\log \frac{S_n}{\widetilde{K}} \geq
\frac{1}{n}\log \frac{S_n}{\widetilde{K}} \textnormal{ for all }
n\geq 1.
\end{equation*}
Using $P(\sigma_Y, \Phi)=\lim_{n \rightarrow \infty} (\log S_n)
/n,$ we get  ${1}/{\widetilde{K}}\leq {e^{nP(\sigma_Y,
\Phi)}}/{S_n}.$ Set $K_1= {1}/{\widetilde{K}}.$
\end{proof}
\begin{lemma}\label{gibbsfornul}
For all $l, n \in \N, l > n+M,$ and cylinders $[i_1\cdots i_n]$ in
$Y,$ there exist $C_1, C_2>0$ such that
\begin{equation*}
C_1 \leq \frac{{\nu_{l}}([i_1 \cdots i_n])}{e^{-nP(\sigma_Y,
\Phi)}\vert \pi^{-1}[i_1 \cdots i_n]\vert^{1/(\alpha+1)}}\leq C_2.
\end{equation*}
\end{lemma}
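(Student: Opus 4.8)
The plan is to prove this Gibbs-type estimate for $\nu_l$ in the spirit of the corresponding lemma in \cite{B2006}, the point being to express $\nu_l([i_1\cdots i_n])$ explicitly and then sandwich it between constant multiples of $e^{-nP(\sigma_Y,\Phi)}\vert\pi^{-1}[i_1\cdots i_n]\vert^{1/(\alpha+1)}$. Since $\nu_l$ is concentrated on $A_l$ with $\nu_l([j_1\cdots j_l])=\vert\pi^{-1}[j_1\cdots j_l]\vert^{1/(\alpha+1)}/S_l$, summing over the length-$l$ cylinders contained in $[i_1\cdots i_n]$ gives
\[
\nu_l([i_1\cdots i_n])=\frac{1}{S_l}\sum_{w}\vert\pi^{-1}[i_1\cdots i_n\,w]\vert^{1/(\alpha+1)},
\]
the sum running over all words $w$ of length $l-n$ with $i_1\cdots i_n\,w\in B_l(Y)$. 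Everything then reduces to bounding this partition function above by a constant times $\vert\pi^{-1}[i_1\cdots i_n]\vert^{1/(\alpha+1)}S_{l-n}$ and below by a constant times $\vert\pi^{-1}[i_1\cdots i_n]\vert^{1/(\alpha+1)}S_{l-n-M}$, after which Lemma \ref{key2} converts the ratios $S_{l-n}/S_l$ and $S_{l-n-M}/S_l$ into $e^{-nP(\sigma_Y,\Phi)}$ up to multiplicative constants (here $P(\sigma_Y,\Phi)$ is finite, as $1\le S_n\le S_1^{n}$).

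For the upper bound I would use that, because $\pi$ is a one-block map, every block of $X$ mapping to $i_1\cdots i_n\,w$ splits uniquely as the concatenation of a block mapping to $i_1\cdots i_n$ and a block mapping to $w$; hence $\vert\pi^{-1}[i_1\cdots i_n\,w]\vert\le\vert\pi^{-1}[i_1\cdots i_n]\vert\cdot\vert\pi^{-1}[w]\vert$. Taking $(1/(\alpha+1))$-th powers (which is monotone) and summing over $w$, a subcollection of $B_{l-n}(Y)$, gives $\sum_{w}\vert\pi^{-1}[i_1\cdots i_n\,w]\vert^{1/(\alpha+1)}\le\vert\pi^{-1}[i_1\cdots i_n]\vert^{1/(\alpha+1)}S_{l-n}$. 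Combined with $S_{l-n}\le e^{(l-n)P(\sigma_Y,\Phi)}/K_1$ and $S_l\ge e^{lP(\sigma_Y,\Phi)}/K_2$ from Lemma \ref{key2}, this produces the right-hand inequality with $C_2=K_2/K_1$.

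The substance of the proof is the lower bound, and the main obstacle is that we must produce many $X$-preimages of an extension of $i_1\cdots i_n$ \emph{for a middle connecting word that has been fixed in $Y$}. Using $l>n+M$, I would restrict the sum above to words of the form $w=b\,u$ with $u\in B_{l-n-M}(Y)$ and $b$ a length-$M$ word of $Y$ chosen depending on $u$ as follows. For fixed $u$, consider the $X$-blocks $\xi\eta\zeta$ of length $l$ with $\vert\xi\vert=n$, $\vert\eta\vert=M$, $\pi(\xi)=i_1\cdots i_n$, $\pi(\zeta)=u$: by topological mixing of $X$, each of the $\vert\pi^{-1}[i_1\cdots i_n]\vert\cdot\vert\pi^{-1}[u]\vert$ pairs $(\xi,\zeta)$ admits at least one admissible connecting block $\eta$, distinct pairs give distinct blocks $\xi\eta\zeta$, and every such block projects to a word $i_1\cdots i_n\,b\,u$ with $b\in B_M(Y)$. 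By the pigeonhole principle some $b=b(u)$ occurs with
\[
\vert\pi^{-1}[i_1\cdots i_n\,b(u)\,u]\vert\ \ge\ \frac{\vert\pi^{-1}[i_1\cdots i_n]\vert\cdot\vert\pi^{-1}[u]\vert}{\vert B_M(Y)\vert},
\]
and $i_1\cdots i_n\,b(u)\,u\in B_l(Y)$ as it is the $\pi$-image of an admissible block. Since $u\mapsto b(u)\,u$ is injective, summing the $(1/(\alpha+1))$-th powers over $u\in B_{l-n-M}(Y)$ gives $\sum_{w}\vert\pi^{-1}[i_1\cdots i_n\,w]\vert^{1/(\alpha+1)}\ge\vert B_M(Y)\vert^{-1/(\alpha+1)}\vert\pi^{-1}[i_1\cdots i_n]\vert^{1/(\alpha+1)}S_{l-n-M}$; feeding in $S_{l-n-M}\ge e^{(l-n-M)P(\sigma_Y,\Phi)}/K_2$ and $S_l\le e^{lP(\sigma_Y,\Phi)}/K_1$ from Lemma \ref{key2} yields the left-hand inequality with $C_1=\vert B_M(Y)\vert^{-1/(\alpha+1)}(K_1/K_2)e^{-MP(\sigma_Y,\Phi)}>0$. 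Both constants are manifestly independent of $l$, $n$ and the cylinder; the only delicate point is the injectivity-and-admissibility bookkeeping in the pigeonhole step, which is exactly where the finiteness of $B_M(Y)$ and the one-block nature of $\pi$ come into play.
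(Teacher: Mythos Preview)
Your proof is correct and follows the same overall scheme as the paper: write $\nu_l([i_1\cdots i_n])$ as $(1/S_l)\sum_w\vert\pi^{-1}[i_1\cdots i_n\,w]\vert^{1/(\alpha+1)}$, bound this sum above and below by constant multiples of $\vert\pi^{-1}[i_1\cdots i_n]\vert^{1/(\alpha+1)}S_{l-n-M}$ (or $S_{l-n}$), and invoke Lemma~\ref{key2}.

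There are two minor tactical differences worth noting. For the upper bound you use $\vert\pi^{-1}[i_1\cdots i_n\,w]\vert\le\vert\pi^{-1}[i_1\cdots i_n]\vert\cdot\vert\pi^{-1}[w]\vert$ and sum directly to $S_{l-n}$; the paper instead splits off a further length-$M$ prefix of $w$ and lands on $K\cdot S_{l-n-M}$. Your route is simpler. For the lower bound, the paper does not pigeonhole to a single connecting word: it sums over \emph{all} middle words $a_1\cdots a_M$ and uses the elementary inequality $\sum_i x_i^{1/(\alpha+1)}\ge(\sum_i x_i)^{1/(\alpha+1)}$ (valid since $1/(\alpha+1)\le 1$) together with $\sum_{a}\vert\pi^{-1}[i_1\cdots i_n\,a\,u]\vert\ge\vert\pi^{-1}[i_1\cdots i_n]\vert\cdot\vert\pi^{-1}[u]\vert$, obtaining the bound without the extra factor $\vert B_M(Y)\vert^{-1/(\alpha+1)}$. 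Your pigeonhole device is a perfectly valid substitute and avoids that concavity step, at the price of a slightly worse constant.
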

\begin{proof}
Let $[i_1\cdots i_n]$ be a fixed cylinder of length $n$ in $Y.$ By the definition of ${\nu}_l,$ for $n<l,$
\begin{align}\label{defofnul}
\nu_l([i_1 \cdots i_n])=\frac {\sum_{i_1 \cdots i_n j_1 \cdots
j_{l-n} \in B_{l}(Y)}\vert \pi^{-1}[i_1 \cdots i_n j_1 \cdots
j_{l-n}]\vert ^{1/(\alpha +1)}}{\sum_{i_1 \cdots i_l \in
B_{l}(Y)}\vert\pi^{-1}[i_1 \cdots i_l]\vert ^{1/(\alpha +1)}}.
\end{align}

We first find an upperbound $C_2>0.$ For $l>n+M,$ using the
property of topologically mixing,
\begin{align}
& \sum _{i_1\cdots i_n j_1\cdots j_{l-n}\in
B_{l}(Y)}\vert\pi^{-1}[i_1\cdots i_n j_{1}\cdots j_{l-n}]
\vert^{{1}/(\alpha+1)}
\\ & \leq K\vert \pi^{-1} [i_1\cdots i_n]\vert ^{1/(\alpha+1)}\sum_{j_{M+1}\cdots j_{l-n}\in B_{l-n-M}(Y)}\vert \pi^{-1}[j_{M+1}\cdots j_{l-n}]\vert^{1/(\alpha+1)}\label{e1fortopmix}.
\end{align}
Therefore, using (\ref{defofnul}), (\ref{e1fortopmix}) and Lemma
\ref{key2} (with $K_1=1/\widetilde{K}$ and $K_2=1$),
\begin{equation*}
\begin{split}
\frac{{\nu_{l}}([i_1 \cdots i_n])}{e^{-nP(\sigma_Y,\Phi)}\vert
\pi^{-1}[i_1 \cdots i_n]\vert^{1/(\alpha+1)}} &\leq
\frac{KS_{l-n-M}}{S_l}e^{nP(\sigma_Y, \Phi)} \leq
K\cdot \widetilde{K}e^{-MP(\sigma_Y, \Phi)} \textnormal { (by Lemma \ref{key2})}.
\end{split}
\end{equation*}
Next we will find a lower bound $C_1>0$. For two fixed allowable
words $i_1 \cdots i_n$ of length $n$ and $j_{M+1}\cdots j_{l-n}$
of length $(l-n-M)$ in $Y,$ there exists $a_1 \cdots a_M$ such
that $i_1 \cdots i_n a_1 \cdots a_M j_{M+1} \cdots j_{l-n}$ is an
allowable word of length $l$ in $Y.$ Then
\begin{equation*}
\begin{split}
&\sum_{i_1\cdots i_n a_1\cdots a_M j_{M+1}\cdots j_{l-n}\in
B_{l}(Y)}\vert \pi^{-1}[i_1 \cdots i_{n} a_1 \cdots a_M j_{M+1}
\cdots j_{l-n}]\vert^{1/(\alpha+1)}\\& \geq \vert
\pi^{-1}[i_1\cdots i_n]\vert ^{1/(\alpha+1)}\vert
\pi^{-1}[j_{M+1}\cdots j_{l-n}]\vert^{{1}/(\alpha+1)}.
\end{split}
\end{equation*}
Summing over all allowable words $j_{M+1}\cdots j_{l-n}$ of length
$(l-n-M)$ in $Y,$ we get
\begin{align}
& \sum_{i_1 \cdots i_n j_1\cdots j_{l-n}\in B_{l}(Y) }\vert \pi^{-1}[i_1\cdots i_n j_1 \cdots j_{l-n}]\vert^{1/(\alpha+1)}\\
 & \label{eq2fortopmix} \geq
\vert \pi^{-1}[i_1\cdots i_n]\vert ^{1/(\alpha+1)}
\sum_{j_{M+1}\cdots j_{l-n}\in B_{l-n-M}(Y)}\vert
\pi^{-1}[j_{M+1}\cdots j_{l-n}]\vert^{1/(\alpha+1)}.
\end{align}
Therefore, using inequality (\ref{eq2fortopmix}) and Lemma
\ref{key2},
\begin{equation*}
\frac{{\nu_{l}}([i_1 \cdots i_n])}{e^{-nP(\sigma_Y, \Phi)}\vert
\pi^{-1}[i_1 \cdots i_n]\vert^{{1}/(\alpha+1)}} \geq
\frac{S_{l-n-M}}{S_l} e^{nP(\sigma_Y, \Phi)}\geq
\frac{S_{l-n-M}S_n}{\widetilde{K}S_l}\geq \frac{e^{-MP(\sigma_Y, \Phi)}}{\widetilde{K}^2}.
\end{equation*}
\end{proof}

\noindent By Lemma \ref{gibbsfornul}, if a subsequence $\{\nu_{n_k}\}^{\infty}_{k=1}$ of $\{\nu_{n}\}_{n=1}^{\infty}$
converges to a Borel probability measure $\nu$ on $Y$ in the weak* topology, then
\begin{align}
C_1 \leq \frac{{\nu}([i_1 \cdots i_n])}{e^{-nP(\sigma_Y,
\Phi)}\vert \pi^{-1}[i_1 \cdots i_n]\vert^{1/(\alpha+1)}}\leq C_2 \textnormal{ for all }n\in\N.
\label{vgibbs}
\end{align}
\begin{lemma}\label{equistateY}
Let $\nu$ be the limit point of a convergent subsequence
$\{\nu_{n_k}\}^{\infty}_{k=1}$ of  $\{\nu_{n}\}^{\infty}_{n=1}$.
Let $\mu_n=\frac{1}{n}\sum_{i=0}^{n-1}\sigma_Y^{i}\nu.$ Then any
weak limit point $\mu$ of $\{\mu_{n}\}_{n=1}^{\infty}$ is a
$\sigma_Y$-invariant Gibbs measure for $\Phi.$
\end{lemma}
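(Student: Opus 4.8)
The plan is to establish two things about any weak-$*$ limit point $\mu$ of the Ces\`aro averages $\mu_n = \frac1n\sum_{i=0}^{n-1}\sigma_Y^i\nu$: first, that $\mu$ is a Gibbs measure for $\Phi$, and second, that $\mu$ is an equilibrium state for $\Phi$. Invariance of $\mu$ is automatic, since $\mu$ is a limit point of Ces\`aro averages of the push-forwards of a fixed probability measure under $\sigma_Y$; this is the standard Krylov--Bogolyubov argument.

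For the Gibbs property, I would start from the two-sided bound (\ref{vgibbs}) on $\nu$, namely $C_1\le \nu([i_1\cdots i_n])/(e^{-nP(\sigma_Y,\Phi)}\vert\pi^{-1}[i_1\cdots i_n]\vert^{1/(\alpha+1)})\le C_2$ for all $n$, and transfer it first to each push-forward $\sigma_Y^j\nu$ and then to the averages $\mu_n$. The key point is that, because $(Y,\sigma_Y)$ is a topologically mixing shift of finite type with transition length $M$, one can compare $\sigma_Y^j\nu([i_1\cdots i_n])$ with $\nu$ of a union of cylinders of length $n+j$ extending $i_1\cdots i_n$ to the left; the topologically-mixing estimates already used in Lemmas \ref{key2} and \ref{gibbsfornul} (insertion of a connecting word of length $M$, plus the submultiplicativity/supermultiplicativity bounds $S_{l+n}\ge S_nS_{l-M}$ and $S_l\le KS_{l-M}$) give upper and lower comparisons of $\vert\pi^{-1}[\cdot]\vert$-sums that are uniform in $j$. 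Averaging over $0\le j\le n-1$ preserves a two-sided bound with new constants, and then passing to the weak-$*$ limit (cylinders are clopen, so their $\mu$-measure is the limit of their $\mu_{n_k}$-measure along a suitable subsequence) yields constants $C_1', C_2'>0$ with
\begin{equation*}
C_1' \le \frac{\mu([i_1\cdots i_n])}{e^{-nP(\sigma_Y,\Phi)}\vert\pi^{-1}[i_1\cdots i_n]\vert^{1/(\alpha+1)}} \le C_2'
\end{equation*}
for every cylinder, which is exactly the Gibbs condition (\ref{gp}) for $\Phi$ with $P = P(\sigma_Y,\Phi)$. I expect this uniform-in-$j$ transfer to be the main obstacle: one has to be careful that the connecting-word corrections and the ratios $S_{n+j}/S_n$ stay controlled independently of $j$, which is where topological mixing and Lemma \ref{key2} (giving $e^{nP(\sigma_Y,\Phi)}/S_n \in [K_1,K_2]$) do the real work.

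For the equilibrium property, I would use the variational principle for subadditive potentials (Theorem \ref{CP}) together with the Gibbs bound just obtained. From the upper Gibbs estimate, summing $\mu([i_1\cdots i_n])$ over all length-$n$ cylinders gives $1 \ge (1/C_2') e^{-nP(\sigma_Y,\Phi)} S_n$, which is consistent; more usefully, a standard entropy-distribution argument (as in the proof of the variational principle, e.g.\ Theorem 8.6 in \cite{Wtext}, adapted to the subadditive setting) shows that a measure satisfying a two-sided Gibbs inequality for $\Phi$ must have $h_\mu(\sigma_Y) + \lim_{n\to\infty}\frac1n\int\phi_n\,d\mu = P(\sigma_Y,\Phi)$: the lower bound on $\mu([i_1\cdots i_n])$ forces $-\frac1n\sum \mu([i_1\cdots i_n])\log\mu([i_1\cdots i_n]) \ge P(\sigma_Y,\Phi) + \frac1n\int\phi_n\,d\mu - o(1)$, while the partition entropies converge to $h_\mu(\sigma_Y)$ and $\frac1n\int\phi_n\,d\mu \to \lim_n \frac1n\int\phi_n\,d\mu$ by the subadditive ergodic theorem; combined with the reverse inequality from Theorem \ref{CP} this pins down the supremum. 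Hence $\mu \in M_\Phi(Y,\sigma_Y)$, and being Gibbs, $\mu$ is the measure we want. This is the same scheme as Lemmas 1 and 2 of \cite{B2006}, the only new feature being that $\Phi$ is merely subadditive rather than almost additive, so one uses the one-sided comparisons of Lemmas \ref{key2} and \ref{gibbsfornul} in place of the two-sided almost-additivity estimate (\ref{aadditiveinourcase}).
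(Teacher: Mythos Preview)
Your proposal is correct and follows essentially the same route as the paper: invariance by Krylov--Bogolyubov, then transfer the two-sided Gibbs bounds on $\nu$ from (\ref{vgibbs}) to each pushforward $\sigma_Y^l\nu$ uniformly in $l$ via the mixing/connecting-word estimates and Lemma \ref{key2}, average, and pass to the weak-$*$ limit on clopen cylinders. Your second paragraph on the equilibrium property goes beyond what the lemma itself asserts; the paper records that consequence separately right after the lemma (appealing to Lemma 17 of \cite{m2006}), and your entropy-distribution sketch is an acceptable alternative for that step.
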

\begin{proof}
We follow the arguments in the first part of the proof of Theorem
1 \cite{B2006}. Suppose $\{\mu_{n_k}\}_{k=1}^{\infty}$ converges
to $\mu$ in the weak* topology. Then $\mu$ is a
$\sigma_Y$-invariant Borel probability measure on $Y$ (Theorem
6.9. in \cite{Wtext}). We want to show that $\mu$ is Gibbs.  Let
$i_1\cdots i_n$ be a fixed allowable word of length $n$ in $Y.$
For each $l,n\in \N,l>M$,
\begin{align}
(\sigma_Y^{l}\nu)([i_1\cdots i_n])&=\sum_{j_1 \cdots j_l
i_{1}\cdots i_{n} \in B_{l+n}(Y)} \nu([j_1 \cdots j_l i_1 \cdots
i_n])\\& \geq \sum_{j_1 \cdots j_l i_{1}\cdots i_{n} \in
B_{l+n}(Y)} C_1e^{-(l+n)P(\sigma_Y, \Phi)}\vert \pi^{-1}[j_1\cdots
j_li_1\cdots i_n]\vert^{\frac{1}{\alpha+1}}  \\&\geq C_1 \vert \pi^{-1}[i_1\cdots
i_n]\vert^{\frac{1}{\alpha+1}}e^{-(l+n)P(\sigma_Y, \Phi)}\sum_{j_1
\cdots j_{l-M} \in B_{l-M}(Y)}\vert \pi^{-1}[j_1 \cdots
j_{l-M}]\vert^{\frac{1}{\alpha+1}} \label{eq3fortopomix}\\& \geq
e^{-MP(\sigma_Y, \Phi)}\frac{C_1}{C_2}\nu([i_1\cdots
i_n])(\textnormal{by Lemma \ref{key2} and (\ref{vgibbs})}) .
\end{align}
For inequality (\ref{eq3fortopomix}), we use the following
inequality (\ref{topomixf}) which  is easy to show.

For a fixed allowable word $i_1 \cdots i_n $ of length $n$,
$l>M,$
\begin{equation}\label{topomixf}
\begin{split}
\sum_{j_1 \cdots j_l i_1 \cdots i_n\in B_{l+n}(Y)}\vert
\pi^{-1}[& j_1 \cdots j_l i_1 \cdots i_n]\vert^{1/(\alpha+1)} \\
& \geq
\vert \pi^{-1}[i_1 \cdots i_n]\vert^{1/(\alpha+1)}\sum_{j_1\cdots
j_{l-M}\in B_{l-M}(Y)}\vert \pi^{-1}[j_1 \cdots j_{l-M}]\vert^{
1/(\alpha+1)}.
\end{split}
\end{equation}

For a fixed allowable word $i_1\cdots i_n$ of length $n$ in $Y,$
\begin{equation*}
\begin{split}
(\sigma_Y^{l}\nu)([i_1\cdots i_n]) &\leq \sum_{j_1 \cdots j_l
i_1\cdots i_n \in B_{l+n}(Y)}C_2 e^{-(n+l)P(\sigma_Y, \Phi)}\vert
\pi^{-1}[j_1 \cdots j_l i_1 \cdots i_n]\vert^{1/(\alpha +1)}\\&
\leq C_2 \frac{\sum_{j_1\cdots j_l \in B_{l}(Y)} \vert
\pi^{-1}[j_1 \cdots j_l]\vert ^{1/(\alpha
+1)}}{e^{(l+n)P(\sigma_Y, \Phi)}}\vert \pi^{-1}[i_1\cdots
i_n]\vert^{1/(\alpha+1)}\\ &\leq \frac{\widetilde{K}C_2}{C_1}\nu[i_1\cdots
i_n] \textnormal{(by Lemma \ref{key2} and
(\ref{vgibbs}))}.
\end{split}
\end{equation*}
Therefore, for all $m> M+1$, using (\ref{vgibbs}), we obtain
$$\frac{m-M-1}{m}\big(\frac{C_1^2}{C_2e^{(M+n)P(\sigma_Y, \Phi)}} \vert \pi^{-1}[i_1\cdots i_n]\vert^{1/(\alpha+1)}\big)\leq \frac{1}{m}\sum_{l=0}^{m-1}(\sigma_Y^{l}\nu)([i_1\cdots i_n])$$ and
$$\frac{1}{m}\sum_{l=0}^{m-1}(\sigma_Y^{l}\nu)([i_1\cdots i_n]) \leq  \frac{m-M-1}{m}\big(\frac{\widetilde{K}{C_2}^2}{C_1 e^{nP(\sigma_Y, \Phi)}}\vert \pi^{-1}[i_1\cdots i_n]\vert^{1/(\alpha+1)}\big)+\frac{M+1}{m}.$$
Since $\{\mu_{n_k}\}_{k=1}^{\infty}$ converges to $\mu$ in the weak*
topology, replacing $m$ by $n_k$ and letting $k\rightarrow \infty$,
$$\frac{C_1^2}{C_2e^{MP(\sigma_Y, \Phi)}}\leq \frac{\mu[i_1\cdots i_n]}{e^{-nP(\sigma_Y, \Phi)}\vert \pi^{-1}[i_1\cdots i_n]\vert^{1/(\alpha+1)}} \leq \frac{\widetilde{K}{C_2}^2}{C_1}.$$
Therefore $\mu$ is Gibbs for $\Phi$.
\end{proof}
By Lemma \ref{equistateY}, it is easy to see that $\mu$ is an equilibrium state for
$\Phi$ by using similar arguments as in the proof of Lemma 17 \cite{m2006}.
\begin{lemma}\label{topmix5}
For fixed allowable words $i_1\cdots i_n, j_1\cdots j_l$ in $Y,
k>n+2M,$
\begin{equation*}
\begin{split}
&\sum _{i_1\cdots i_n a_1 \cdots a_{k-n}j_1\cdots j_l \in
B_{k+l}(Y)}\vert \pi^{-1}[i_1\cdots i_n a_1 \cdots a_{k-n}j_1
\cdots j_l]\vert^{1/(\alpha+1)}
\\& \geq (\vert \pi^{-1}[i_1 \cdots i_{n}]\vert \vert \pi^{-1}[j_1 \cdots j_{l}]\vert)^{1/(\alpha+1)}\sum _{b_1\cdots b_{k-n-2M}\in B_{k-n-2M}(Y)}\vert \pi^{-1}[b_1 \cdots b_{k-n-2M}]\vert^{1/(\alpha+1)}.
\end{split}
\end{equation*}
\end{lemma}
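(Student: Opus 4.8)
The plan is to glue preimages together using topological mixing of $X$ --- concretely $A^{M}>0$, which for a topologically mixing shift of finite type forces $A^{N}>0$ for every $N\ge M$ --- and thereby reduce the statement to an elementary counting inequality followed by the subadditivity of $t\mapsto t^{1/(\alpha+1)}$. This is the same device already used for $(\ref{key2-1})$ and $(\ref{topomixf})$, now applied on both sides of a free block simultaneously.

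First I would write the length $k-n$ of the middle word as $k-n=M+(k-n-2M)+M$, which is legitimate because $k>n+2M$. Fix an allowable word $b_{1}\cdots b_{k-n-2M}$ of $Y$ and restrict the left-hand sum to those middle words $a_{1}\cdots a_{k-n}$ whose symbols in positions $M+1,\dots,k-n-M$ spell $b_{1}\cdots b_{k-n-2M}$; as $b$ runs over $B_{k-n-2M}(Y)$ and the two end-pieces $a_{1}\cdots a_{M}$ and $a'_{1}\cdots a'_{M}$ (the last $M$ symbols of the middle word) run over all completions making the whole word allowable in $Y$, we recover exactly the left-hand sum of the lemma. Hence it suffices to prove, for each fixed $i=i_{1}\cdots i_{n}$, $b=b_{1}\cdots b_{k-n-2M}$ and $j=j_{1}\cdots j_{l}$,
\[
\sum_{a,\,a'}\vert\pi^{-1}[\,i\,a\,b\,a'\,j\,]\vert^{1/(\alpha+1)}\ \ge\ \bigl(\vert\pi^{-1}[i]\vert\,\vert\pi^{-1}[b]\vert\,\vert\pi^{-1}[j]\vert\bigr)^{1/(\alpha+1)}.
\]

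The heart of the matter is the unexponentiated inequality $\sum_{a,a'}\vert\pi^{-1}[\,i\,a\,b\,a'\,j\,]\vert\ \ge\ \vert\pi^{-1}[i]\vert\,\vert\pi^{-1}[b]\vert\,\vert\pi^{-1}[j]\vert$. To see it, note the left side equals the number of length-$(k+l)$ words $x_{1}\cdots x_{k+l}$ of $X$ with $\pi(x_{1}\cdots x_{n})=i$, $\pi(x_{n+M+1}\cdots x_{k-M})=b$ and $\pi(x_{k+1}\cdots x_{k+l})=j$. Given a length-$n$ preimage $u$ of $i$, a length-$(k-n-2M)$ preimage $v$ of $b$, and a length-$l$ preimage $w$ of $j$ (all words in $X$), I use $(A^{M+1})_{u_{n}v_{1}}>0$ and $(A^{M+1})_{v_{k-n-2M}\,w_{1}}>0$ to fill the two blocks of $M$ undetermined symbols at positions $n+1,\dots,n+M$ and $k-M+1,\dots,k$, producing a legal word $x_{1}\cdots x_{k+l}$ of $X$ with $x_{1}\cdots x_{n}=u$, $x_{n+M+1}\cdots x_{k-M}=v$, $x_{k+1}\cdots x_{k+l}=w$; its $\pi$-image has the form $i\,a\,b\,a'\,j$ and is therefore allowable in $Y$. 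Distinct triples $(u,v,w)$ yield distinct $x$ (read off $u,v,w$ as subwords), so the construction is injective and the counting inequality follows. Finally $\sum_{s}t_{s}^{1/(\alpha+1)}\ge\bigl(\sum_{s}t_{s}\bigr)^{1/(\alpha+1)}$ for nonnegative $t_{s}$ (valid since $0<1/(\alpha+1)<1$) upgrades this to the displayed inequality, and summing it over $b\in B_{k-n-2M}(Y)$ gives the assertion of the lemma.

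The one genuine point to watch is the position-bookkeeping: tracking which positions are occupied by $i$, the first connecting block, $b$, the second connecting block, and $j$, and checking that every consecutive transition of the built word is legal. The appeal to mixing is routine once one notes that $A^{M}>0$ implies $A^{M+1}>0$ (every state of a mixing shift of finite type has both an incoming and an outgoing edge), so no obstacle arises beyond this accounting.
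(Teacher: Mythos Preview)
Your argument is correct and follows essentially the same approach as the paper: decompose the middle block as an $M$-word, a free block $c$ of length $k-n-2M$, and another $M$-word; use mixing of $X$ (via $A^{M}>0$, hence $A^{M+1}>0$) to glue preimages across the $M$-gaps; then apply the subadditivity of $t\mapsto t^{1/(\alpha+1)}$ and sum over $c$. The only organisational difference is that you perform both gluings (left and right of $c$) in a single counting step before applying the power inequality once, whereas the paper proceeds sequentially---first gluing on the left (fixing $c,v$) and applying the power inequality, then bounding $\sum_{v}\vert\pi^{-1}[cvj]\vert^{1/(\alpha+1)}$ by a second gluing on the right---but this is cosmetic rather than substantive.
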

\begin{proof}
Let $a_{M+1} \cdots a_{k-n-M}$ be an allowable word of length
$(k-n-2M)$ in $Y.$ Call it $c$. Then there exist $\overline {a_1 \cdots a_M},
\overline{a_{k-n-M+1} \cdots a_{k-n}}$ such that $i_1 \cdots i_n
\overline{a_1 \cdots a_M} c
\overline{a_{k-n-M+1} \cdots a_{k-n} }j_1 \cdots j_l$ is allowable
in $Y$. Denote $ \overline{a_{1} \cdots a_{M}}$ by $u$ and $
\overline{a_{k-n-M+1}\cdots a_{k-n}}$ by $v.$ Fix $c$ and $v$. Then
\begin{equation*}
\sum _{i_1 \cdots i_n  ucv  j_1 \cdots j_l
\in B_{k+l}(Y)}\vert \pi^{-1}[i_1 \cdots i_n u
cv
j_1 \cdots j_l] \vert \geq \vert \pi^{-1}[i_1 \cdots
i_{n}]\vert \vert \pi^{-1}[cv j_1\cdots j_l]\vert.
\end{equation*}
Therefore,
\begin{equation*}
\sum _{i_1\cdots i_n ucv j_1\cdots j_l \in B_{k+l}(Y) }\vert \pi^{-1}[i_1\cdots i_n ucv j_1\cdots j_l]\vert ^{1/(\alpha +1)}
\geq (\vert \pi^{-1}[i_1 \cdots i_{n}] \vert \vert\pi^{-1}[cv j_1\cdots j_l]\vert)^{1/(\alpha +1)}.
\end{equation*}
Now for fixed $i_1 \cdots i_n, j_1 \cdots j_l$ and  $c$, summing over all allowable words $u,
v$ in $Y$ such that $i_1\cdots
i_n ucv j_1\cdots j_l$ is allowable,
\begin{equation*}
\begin{split}
&\sum_{i_1\cdots i_n ucv j_1\cdots j_l \in B_{k+l}(Y)} \vert \pi^{-1}[i_1\cdots i_n ucv j_1 \cdots j_l]\vert ^{1/(\alpha +1)}
\\& \geq \vert\pi^{-1}[i_1 \cdots i_n]\vert^{1/(\alpha+1)}
\sum_{cvj_1 \cdots j_l\in B_{k+l-n-M}(Y)}\vert
\pi^{-1}[cvj_{1}\cdots j_{l}]\vert ^{1/(\alpha+1)} \\& \geq \vert
\pi^{-1}[i_1\cdots i_n]\vert^{1/(\alpha+1)} \vert
\pi^{-1}[c]\vert^{1/(\alpha+1)}\vert
\pi^{-1}[j_1 \cdots j_l] \vert^{1/(\alpha+1)}.
\end{split}
\end{equation*}
Finally summing over all allowable words $u,c,v$  in
$Y$ such that $i_1\cdots i_n ucv j_1\cdots j_l$ is allowable,
\begin{equation*}
\begin{split}
&\sum_{i_1\cdots i_n ucv j_1\cdots j_l \in B_{k+l}(Y)} \vert \pi^{-1}[i_1\cdots i_n ucv j_1 \cdots j_l]\vert ^{1/(\alpha +1)} \\
& \geq (\vert \pi^{-1}[i_1 \cdots i_n]\vert \vert
\pi^{-1}[j_1 \cdots j_l]\vert)^{1/(\alpha+1)} \sum_{a_{M+1}\cdots
a_{k-n-M}\in B_{k-n-2M}(Y)} \vert \pi^{-1}[a_{M+1}\cdots
a_{k-n-M}] \vert^{1/(\alpha+1)}.
\end{split}
\end{equation*}
\end{proof}
\begin{lemma}\label{ergodic}
Let $\nu$ be a Gibbs measure for $\Phi.$ Then any Gibbs measure
for $\Phi$ is ergodic.
\end{lemma}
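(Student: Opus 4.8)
The plan is to exploit the Gibbs property \eqref{gp} to show that any Gibbs measure $\nu$ for $\Phi$ has the \emph{quasi-Bernoulli} (or "weak Gibbs mixing") behavior that forces ergodicity via a standard argument. Concretely, I would first combine the two-sided Gibbs bounds with topological mixing: given any two cylinders $[i_1\cdots i_n]$ and $[j_1\cdots j_l]$ in $Y$, Lemma \ref{topmix5} together with Lemma \ref{key2} yields, for all $k>n+2M$,
\begin{equation*}
\nu\big([i_1\cdots i_n]\cap\sigma_Y^{-(k+l')}[j_1\cdots j_l]\big)\;\ge\;
\sum_{a}\nu\big([i_1\cdots i_n a_1\cdots a_{k-n} j_1\cdots j_l]\big)
\;\ge\; D\,\nu[i_1\cdots i_n]\,\nu[j_1\cdots j_l]
\end{equation*}
for a constant $D>0$ independent of the cylinders and of $k$, where the first inequality sums over the appropriate connecting words and the last uses \eqref{gp} (applied to each side) together with Lemma \ref{key2} to absorb the factor $e^{-MP(\sigma_Y,\Phi)}\sum_{b}\vert\pi^{-1}[b_1\cdots b_{k-n-2M}]\vert^{1/(\alpha+1)}\asymp e^{(k-n-2M-l')P(\sigma_Y,\Phi)}$ against the normalizing exponentials in the Gibbs estimates. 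Here $l'$ is chosen so that the block lengths match; the point is that the lower bound is uniform.

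Next I would upgrade this cylinder estimate to arbitrary Borel sets. Since cylinders generate the Borel $\sigma$-algebra and form a semialgebra, a standard approximation argument gives: there exist $N_0\in\N$ and $D>0$ such that for all Borel sets $A,B\subseteq Y$ and all $k\ge N_0$,
\begin{equation*}
\nu\big(A\cap\sigma_Y^{-k}B\big)\;\ge\;D\,\nu(A)\,\nu(B).
\end{equation*}
(One approximates $A$ and $B$ from above by finite unions of cylinders and uses that the constant $D$ does not depend on the cylinder lengths.) Now suppose $A$ is $\sigma_Y$-invariant, $\sigma_Y^{-1}A=A$. Taking $B=Y\setminus A$ in the displayed inequality gives $0=\nu(A\cap\sigma_Y^{-k}(Y\setminus A))\ge D\,\nu(A)\,\nu(Y\setminus A)$, hence $\nu(A)\nu(Y\setminus A)=0$, so $\nu(A)\in\{0,1\}$. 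Therefore $\nu$ is ergodic. (I must first note $\nu$ is $\sigma_Y$-invariant; for the Gibbs measures produced in Lemma \ref{equistateY} this is already established, and for a general Gibbs $\nu$ one may time-average it — the Gibbs property is preserved under averaging, as in Lemma \ref{equistateY} — so there is no loss in assuming invariance, which the hypothesis "any Gibbs measure for $\Phi$" should be read to include.)

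The main obstacle is the first step: making the mixing-type lower bound genuinely \emph{uniform} in the cylinder lengths $n,l$ and in $k$. This is exactly where the subadditivity of $\{\log S_n\}$ and the two-sided control from Lemma \ref{key2} (the constants $K_1=1/\widetilde K$, $K_2=1$) are essential, since $\Phi$ is \emph{not} almost additive and one cannot simply quote Theorem \ref{BM}. I would therefore be careful to track how the factor $\vert\pi^{-1}[\cdot]\vert^{1/(\alpha+1)}$ splits (with the multiplicative error $K$ from topological mixing, absorbed into $D$) and to verify that the exponential weights $e^{-nP(\sigma_Y,\Phi)}$ appearing in \eqref{gp} for $A$, for $B$, and for the combined block cancel correctly against $S_{k-n-2M-l'}$ via Lemma \ref{key2}. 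Once that bookkeeping is done, the passage to Borel sets and the $0$–$1$ conclusion are routine.
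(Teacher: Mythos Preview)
Your approach is essentially the same as the paper's: both establish the uniform cylinder lower bound $\nu([i_1\cdots i_n]\cap\sigma_Y^{-k}[j_1\cdots j_l])\ge C\,\nu[i_1\cdots i_n]\,\nu[j_1\cdots j_l]$ for all $k>n+2M$ by combining the Gibbs bounds with Lemmas~\ref{key2} and~\ref{topmix5}, then pass to Borel sets and conclude ergodicity. The only differences are cosmetic---your stray index $l'$ is unnecessary (the shift is simply $\sigma_Y^{-k}$, with connecting word of length $k-n$), and the paper omits your aside about invariance, tacitly working with the invariant Gibbs measures already produced in Lemma~\ref{equistateY}.
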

\begin{proof}
We use the same arguments as in the proof of Lemma 2 in
\cite{B2006}. We show that for any two cylinders $[i_1\cdots
i_n],$ $[j_1 \cdots j_l],$ for each $k>n+2M,$ there exists $C>0$
such that $\nu([i_1 \cdots i_n] \cap \sigma_{Y}^{-k}([j_1 \cdots
j_l]))\geq C\nu([i_1\cdots i_n])\nu([j_1\cdots j_l]).$ This
implies that $\liminf_{k\rightarrow\infty}\nu(A \cap
\sigma_{Y}^{-k}(B))\geq C\nu(A)\nu(B)$ for any Borel measurable
subsets $A, B$ of $Y$. Suppose $\nu$ is a Gibbs measure for $\Phi$
as in (\ref{vgibbs}). We denote $i_1\cdots i_n$ by $u_1$ and $j_1
\cdots j_l$ by $u_2$.

\noindent Using Lemmas \ref{key2} and \ref{topmix5},
\begin{equation*}
\begin{split}
&\nu([u_1] \cap \sigma_{Y}^{-k}([u_2]))= \sum _{u_1 a_1 \cdots a_{k-n}u_2 \in B_{k+l}(Y)}\nu([u_1 a_1 \cdots a_{k-n} u_2])\\
& \geq \sum_{u_1 a_1 \cdots a_{k-n}u_2\in B_{k+l}(Y)}C_1 \vert \pi^{-1}[u_1 a_1 \cdots a_{k-n}u_2]\vert^{\frac{1}{\alpha+1}}e^{-(k+l)P(\sigma_Y, \Phi)}\\
& \geq C_1 \frac{(\vert \pi^{-1}[u_1]\vert \vert
\pi^{-1}[u_2]\vert) ^{\frac{1}{\alpha+1}}S_{k-n-2M}e^{(n-k)P(\sigma_Y,
\Phi)}}{e^{(n+l)P(\sigma_Y, \Phi)}} \textnormal{ (by Lemma \ref{topmix5})}
 \\&
\geq  C_{1}{C_{2}}^{-2} \nu[u_1]\nu[u_2]
\frac{S_{k-n-2M}}{e^{(k-n)P(\sigma_Y, \Phi)}} \geq
\frac{C_{1}\nu[u_1]\nu[u_2]}{{C_{2}}^{2}e^{2MP(\sigma_Y, \Phi)}}.
\end{split}
\end{equation*}
\end{proof}
\begin{proposition}\label{uniqued}
Let $(X,\sigma_X),(Y, \sigma_Y)$ be topologically mixing shifts of
finite type and $\pi:(X, \sigma_X)\rightarrow (Y,\sigma_Y)$ a
factor map. Let $\alpha>0.$ For all $n\in \N$, define
$\phi_n:Y\rightarrow \R$ by ${\phi_n}(y)= \log \vert \pi^{-1}[y_1
\cdots y_n] \vert^{1/(\alpha+1)}$ and
$\Phi=\{\phi_n\}^{\infty}_{n=1}.$  For each $\alpha>0$, there is a
unique equilibrium state for $\Phi.$ The unique measure is Gibbs
and mixing.
\end{proposition}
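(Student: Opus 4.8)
The plan is to adapt the proof of Theorem 5 of \cite{B2006} to our $\Phi$, using the estimates already established in Lemmas \ref{key2}--\ref{ergodic} in place of the almost‑additivity hypothesis. Note first that $\phi_n(y)=\log\vert\pi^{-1}[y_1\cdots y_n]\vert^{1/(\alpha+1)}$ depends only on $y_1\cdots y_n$, so $\gamma_n(\Phi)=0$ and bounded variation is automatic. By Lemma \ref{gibbsfornul} and Lemma \ref{equistateY}, the measure $\mu$ obtained as a weak$^*$ limit of the Ces\`aro averages $\mu_n=\tfrac1n\sum_{i=0}^{n-1}\sigma_Y^i\nu$ (where $\nu$ is a weak$^*$ limit point of the $\nu_n$) is a $\sigma_Y$-invariant Gibbs measure for $\Phi$ and, as remarked after Lemma \ref{equistateY}, an equilibrium state for $\Phi$; write $P=P(\sigma_Y,\Phi)$. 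By Lemma \ref{ergodic}, $\mu$ is ergodic. Thus existence, the Gibbs property and ergodicity are already in hand; it remains to prove uniqueness (and mixing).

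For uniqueness I would first record a quantitative decorrelation of widely separated blocks under $\mu$: with $M$ the mixing constant of the text, there is $C_0\ge 1$ such that
\[
C_0^{-1}\,\mu([u])\,\mu([v])\ \le\ \mu\bigl([u]\cap\sigma_Y^{-(n+M)}[v]\bigr)\ \le\ C_0\,\mu([u])\,\mu([v])
\]
for all allowable $u=u_1\cdots u_n$ and $v=v_1\cdots v_m$ in $Y$; the lower bound is essentially the inequality proved in Lemma \ref{ergodic} (Lemma \ref{topmix5} together with Lemma \ref{key2} and the Gibbs bounds (\ref{vgibbs})), and the upper bound follows from $\sum_{\vert w\vert=M}\vert\pi^{-1}[uwv]\vert\le S^{M}\vert\pi^{-1}[u]\vert\,\vert\pi^{-1}[v]\vert$, the elementary inequality $\sum a_i^{1/(\alpha+1)}\le N^{\,\alpha/(\alpha+1)}\bigl(\sum a_i\bigr)^{1/(\alpha+1)}$, and again (\ref{vgibbs}) and Lemma \ref{key2}. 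Now let $\nu$ be any equilibrium state for $\Phi$ and set
\[
D_n=\sum_{w\in B_n(Y)}\nu([w])\log\frac{\nu([w])}{\mu([w])}\,,
\]
which is finite and $\ge 0$ since $\mu([w])>0$ for every allowable $w$. Three properties of $\{D_n\}$ close the argument: (i) $D_n$ is sublinear, $\tfrac1n D_n\to P-h_\nu(\sigma_Y)-\lim_n\tfrac1n\int\phi_n\,d\nu=0$, because $\log\mu([w])=-nP+\phi_n(w)+O(1)$ uniformly (Gibbs) and $\nu$ is an equilibrium state; (ii) $D_{n+M+m}\ge D_n+D_m-\log C_0$, obtained by coarsening the partition into $(n+M+m)$-cylinders to the $\sigma$-algebra generated by coordinates $1,\dots,n$ and $n+M+1,\dots,n+M+m$, using the displayed estimate to replace $\mu$ of the joined event by $\mu([u])\mu([v])$ up to a bounded factor, the shift-invariance of $\nu$ and $\mu$ to identify the two marginal contributions with $D_n$ and $D_m$, and the nonnegativity of the mutual information that appears; (iii) hence $D_n\le\log C_0$ for all $n$, since iterating (ii) gives $D_{kn+(k-1)M}\ge kD_n-(k-1)\log C_0$, and dividing by $kn+(k-1)M$ and letting $k\to\infty$ forces, by (i), $D_n\le\log C_0$. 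A uniform bound on these relative entropies means, by the de la Vall\'ee-Poussin criterion, that the martingale $y\mapsto\nu([y_1\cdots y_n])/\mu([y_1\cdots y_n])$ is uniformly integrable over $(Y,\mu)$, hence $\nu\ll\mu$; and since $\nu$ is $\sigma_Y$-invariant and $\mu$ is ergodic, $d\nu/d\mu$ is $\sigma_Y$-invariant, so $\mu$-a.e.\ constant, whence $\nu=\mu$. This proves that $\mu$ is the unique equilibrium state for $\Phi$, and $\mu$ is Gibbs by construction.

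Mixing I would obtain by adapting the corresponding part of the proof of Theorem 5 in \cite{B2006}: one refines the estimates behind Lemma \ref{ergodic}, now with matching upper bounds of the type used in the proof of Lemma \ref{gibbsfornul}, to conclude $\mu([u]\cap\sigma_Y^{-k}[v])\to\mu([u])\mu([v])$ for all cylinders $[u],[v]$. The main obstacle throughout is steps (ii)--(iii) of the uniqueness argument: upgrading the purely asymptotic information an equilibrium state carries --- via the Shannon--McMillan--Breiman theorem for $h_\nu(\sigma_Y)$ and Kingman's subadditive ergodic theorem for $\Phi$ --- to a uniform control of $D_n$. This is precisely where topological mixing of $X$ and $Y$ is indispensable, entering through Lemmas \ref{key2} and \ref{topmix5}, since it is what makes $\{S_n\}$ essentially super‑multiplicative and hence forces the decorrelation estimate above.
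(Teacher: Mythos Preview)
Your proposal is correct and follows essentially the same route as the paper, which itself only gives an outline deferring to Theorem~5 of \cite{B2006}: construct the invariant Gibbs measure via Lemma~\ref{equistateY}, obtain ergodicity from Lemma~\ref{ergodic}, and then run the Bowen--Barreira relative-entropy argument to force any equilibrium state to be absolutely continuous with respect to (hence equal to) the Gibbs measure. The only cosmetic difference is that the paper first isolates ``uniqueness of the invariant Gibbs measure'' (two Gibbs measures for $\Phi$ are equivalent, two distinct ergodic measures are singular) before invoking \cite{B2006}, whereas you go straight to uniqueness of the equilibrium state; your explicit quasi-Bernoulli upper bound and the superadditivity estimate $D_{n+M+m}\ge D_n+D_m-\log C_0$ are exactly the details the paper suppresses by citing \cite{B2006}.
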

\begin{remark}\label{runiqued}
We have the same results for $\alpha=0.$ The unique equilibrium
state $\mu$ is not always Gibbs for a continuous function but it
is Gibbs for $\Phi$ (see Example \ref{notgibbsforc}).
\end{remark}
\begin{proof}
We follow the proof of Theorem 5 \cite{B2006} and so we only give
an outline of the proof. In Lemma \ref{equistateY}, we construct
an equilibrium state $\mu$ for $\Phi$ which is Gibbs. If $\mu$ and
$\mu'$ are two distinct $\sigma_Y$-invariant ergodic Borel
probability measures on $Y$, then $\mu$ and $\mu'$ must be
mutually singular. Therefore, by Lemma \ref{ergodic}, $\mu$ in
Lemma \ref{equistateY} is unique  and it is the unique ergodic
invariant Borel probability measure on $Y$ that satisfies the
Gibbs property for $\Phi.$ The same arguments as in the proof of
Theorem 5 \cite{B2006} show that  $\mu$ is the unique equilibrium
state for $\Phi$ and that it is mixing.
\end{proof}
In order to study uniqueness of the equilibrium state for $-\alpha
\Phi\circ \pi =\{-\alpha \phi_n\circ \pi\}^{\infty}_{n=1},$ we use
the following results by Petersen, Quas and Shin \cite{PQS}.
\begin{theorem} \cite{PQS} \label{coPQS}
Let $(X, \sigma_X)$ be an irreducible shift of finite, $(Y,
\sigma_Y)$ a subshift and $\pi:(X, \sigma_X) \rightarrow (Y,
\sigma_Y)$ a one-block factor map. Suppose that $\pi$ has a singleton clump
$a$, i.e., there is a symbol $\{a\}$ of $Y$ such that the number
of preimages of $\{a\}$ under $\pi$ is one in $Y.$ Then every
$\sigma_Y$-invariant ergodic Borel probability measure on $Y$ which assigns positive measure to $[a]$ has
a unique preimage of maximal entropy.
\end{theorem}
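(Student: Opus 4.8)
The plan is to pass to the first-return systems over the symbol $a$, use the singleton-clump hypothesis to recognise the induced extension as a ``bundle of full shifts'' over $([a],\bar\sigma_Y)$, invoke uniqueness of the relative measure of maximal entropy for such a bundle, and pull this back via Abramov's formula. Fix an ergodic $m\in M(Y,\sigma_Y)$ with $m([a])>0$; a measure of maximal relative entropy over $m$ exists by Lemma \ref{key1}, so the issue is uniqueness. Since the set of such maximisers is compact and convex with ergodic extreme points (affinity of entropy plus ergodicity of $m$), it suffices to rule out two distinct \emph{ergodic} maximisers. Write $\bar a$ for the unique symbol of $X$ with $\pi(\bar a)=a$; after recoding we may assume $X$ is one-step, the only subtlety being that the clump property must be checked to survive in the form used below.

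First I would set up the induced systems. Let $\tau$ be the first return of $\sigma_Y$ to $[a]$, $\bar\sigma_Y$ the first-return map, $\widehat m$ the induced (ergodic) measure, and let $\bar\sigma_X$ be the first-return map of $\sigma_X$ to $\pi^{-1}([a])=\{x:x_1=\bar a\}$. Because $a$ has a single preimage symbol, $\sigma_X^{\,j}x\in\pi^{-1}([a])$ iff $(\sigma_Y^{\,j}\pi x)_1=a$, so $\pi$ matches the two return-time functions and maps $(\pi^{-1}([a]),\bar\sigma_X)$ onto $([a],\bar\sigma_Y)$. Moreover a point of $\pi^{-1}(\bar y)$, $\bar y\in[a]$, amounts to choosing, freely and independently over $j\ge 0$, one of the $R(\sigma_Y^{\,j}\bar y)$ preimages of the $j$-th return block of $\bar y$, where $R(\bar y):=\vert\pi^{-1}[\bar y_1\cdots\bar y_{\tau(\bar y)+1}]\vert$: freeness holds exactly because consecutive return blocks are joined at the symbol $\bar a$, and two $X$-words sharing that endpoint always concatenate. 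So the induced extension is a bundle of full shifts whose fibre alphabet has size $R(\sigma_Y^{\,j}\bar y)$ at coordinate $j$; since $R\le S^{\tau+1}$ with $S$ the number of symbols of $X$ and $\int\tau\,d\widehat m=1/m([a])<\infty$ by Kac, we get $\int_{[a]}\log R\,d\widehat m<\infty$. The same renewal decomposition of $\vert\pi^{-1}[y_1\cdots y_n]\vert$ across visits to $a$, combined with Theorem \ref{iPS} and Birkhoff's theorem, identifies the relative pressure as $F\equiv m([a])\int_{[a]}\log R\,d\widehat m$ $m$-a.e., which by the Ledrappier--Walters relative variational principle \cite{LW} equals $\sup\{h_\mu(\sigma_X)-h_m(\sigma_Y):\pi\mu=m\}$.

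Next I would treat the bundle of full shifts. For $\bar\nu$ with $\pi\bar\nu=\widehat m$, writing $\xi_j$ for the $j$-th fibre choice,
\[
h_{\bar\nu}(\bar\sigma_X\mid\pi^{-1}\mathcal B)=\lim_{n\to\infty}\tfrac1n\,H_{\bar\nu}(\xi_0,\dots,\xi_{n-1}\mid\pi^{-1}\mathcal B)\le\int_{[a]}\log R\,d\widehat m,
\]
because $H(\xi_0,\dots,\xi_{n-1}\mid\cdot)\le\sum_{j<n}H(\xi_j\mid\cdot)\le\sum_{j<n}\log R(\sigma_Y^{\,j}\bar y)$, and equality forces the $\xi_j$ to be conditionally independent and conditionally uniform given $\pi^{-1}\mathcal B$---a prescription that determines $\bar\nu$ uniquely. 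Hence the induced extension has a unique relative measure of maximal entropy over $\widehat m$. Now for any $\mu$ with $\pi\mu=m$, Abramov's formula applied upstairs and downstairs gives $h_\mu(\sigma_X)-h_m(\sigma_Y)=m([a])\big(h_{\bar\mu}(\bar\sigma_X)-h_{\widehat m}(\bar\sigma_Y)\big)$, where $\bar\mu$ is the induced measure; inducing is a bijection (inverse to the tower reconstruction) between the ergodic $\pi$-preimages of $m$ and those of $\widehat m$, and it carries maximisers to maximisers. Two distinct ergodic maximisers over $m$ would thus induce to two distinct maximisers over $\widehat m$, contradicting the uniqueness just established. This proves the theorem.

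The step I expect to be the real obstacle is making the bundle estimate rigorous when the fibre sizes $R(\sigma_Y^{\,j}\bar y)$ are unbounded: one needs a relative Shannon--McMillan--Breiman argument (or a careful $L^1$ version of the displayed inequality) both to justify the limit defining $h_{\bar\nu}(\bar\sigma_X\mid\pi^{-1}\mathcal B)$ and to propagate the equality case through the ergodic average and the integration, with integrability supplied by $\int\log R\,d\widehat m<\infty$. A secondary nuisance is the bookkeeping of the return-block decomposition together with the one-step recoding---confirming that a single preimage symbol for $a$ in the original presentation still yields the clean ``glue at $\bar a$'' property afterwards, or else running the whole argument directly in the original presentation with the right notion of compatible return words. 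Irreducibility of $X$ enters only through Theorem \ref{iPS}, used to identify $F$.
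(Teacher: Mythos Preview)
The paper does not prove this theorem at all: it is quoted verbatim from Petersen, Quas and Shin \cite{PQS} and used as a black box in the proof of Theorem \ref{uniqueness}. So there is no ``paper's own proof'' to compare against.

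That said, your outline is essentially the argument of \cite{PQS}. The key mechanism there is exactly the one you describe: the singleton clump forces every $X$-word mapping to a $Y$-return block $a w a$ to begin and end with the unique symbol $\bar a$, so preimage blocks over consecutive excursions concatenate freely at $\bar a$; this makes the induced extension over $([a],\bar\sigma_Y,\widehat m)$ a relatively independent product of finite uniform spaces, for which the relative measure of maximal entropy is visibly unique, and Abramov's formula (with Kac for integrability) transports uniqueness back to $(X,\sigma_X)$ over $m$. Your identification of the main technical issue---controlling the relative entropy when the fibre alphabets $R(\sigma_Y^{\,j}\bar y)$ are unbounded---is accurate; \cite{PQS} handles it via the finiteness of $\int\tau\,d\widehat m$ and a direct conditional-entropy computation rather than a full relative SMB theorem. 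Your ``secondary nuisance'' about recoding is real: passing to the higher-block presentation can destroy the literal singleton-clump property, so one should either run the argument in the original presentation (the gluing step only needs that every preimage of $a$ is the same symbol, not that $X$ is one-step) or observe that the concatenation property at $\bar a$ survives recoding even if the clump count does not.
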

The following results generalize the results on uniqueness of the
invariant ergodic measure of full dimension in \cite{Yayama}.
\begin{theorem}\label{uniqueness}
Let $(X,\sigma_X), (Y, \sigma_Y)$ be topologically mixing shifts
of finite type and $\pi:(X,\sigma_X)\rightarrow (Y,\sigma_Y)$ a
factor map.  Let $K$ be an SFT-NC carpet corresponding to $(X,
Y,\pi).$ If $\pi$ has a singleton clump, then there is a unique
$T$-invariant ergodic measure of full dimension for $K$. Define $\Phi
=\{\phi_n\}^{\infty}_{n=1}$ as in Proposition \ref{uniqued}. If
$\Phi$ is an almost additive potential on $(Y,\sigma_Y),$ then
there is a unique $T$-invariant ergodic measure of full dimension for $K$ and it is both
Gibbs and mixing.
\end{theorem}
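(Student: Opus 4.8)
\noindent The plan is to extract both assertions from Theorem~\ref{HDDFORSFT}, which (with $\alpha=\log_m l-1>0$) identifies the $T$-invariant ergodic measures of full dimension for $K$ with the ergodic equilibrium states of $-\alpha\Phi\circ\pi$ on $(X,\sigma_X)$, together with Proposition~\ref{sftwentropy}, by which an ergodic $\mu$ is of full dimension if and only if $\pi\mu$ is an equilibrium state for $\Phi$ and $\mu$ is a measure of maximal relative entropy over $\pi\mu$. The $(Y,\sigma_Y)$-side is already under control: by Proposition~\ref{uniqued} the subadditive potential $\Phi$ on $(Y,\sigma_Y)$ has a unique equilibrium state $\nu$, and $\nu$ is Gibbs and mixing. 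Hence in both cases $M_\Phi(Y,\sigma_Y)=\{\nu\}$, so every $T$-invariant ergodic measure of full dimension $\mu$ satisfies $\pi\mu=\nu$, and the question reduces to counting the measures of maximal relative entropy over the single measure $\nu$.

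For the first assertion I would apply Theorem~\ref{coPQS}. Since $\nu$ is Gibbs for $\Phi$, the Gibbs inequality~(\ref{gp}) gives $\nu([a])>0$ for the singleton-clump symbol $a$ (here $\vert\pi^{-1}[a]\vert=1$, so $\phi_1$ vanishes on $[a]$ and $\nu([a])\ge \frac{1}{C}e^{-P}>0$). Therefore $\nu$ admits a unique preimage of maximal entropy $\mu^{\ast}$, i.e.\ a unique measure of maximal relative entropy over $\nu$; at least one such measure exists by Lemma~\ref{key1}. By Proposition~\ref{sftwentropy}, $\mu^{\ast}$---whose image is $\nu\in M_\Phi(Y,\sigma_Y)$ and which is of maximal relative entropy over that image---is a measure of full dimension, while conversely any ergodic measure of full dimension has image $\nu$ and is of maximal relative entropy over $\nu$, hence equals $\mu^{\ast}$. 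Since Proposition~\ref{sftwentropy} also guarantees the existence of an ergodic measure of full dimension, that measure is $\mu^{\ast}$, which is therefore the unique $T$-invariant ergodic measure of full dimension for $K$.

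For the second assertion the key observation is that almost additivity passes through the one-block factor map $\pi$. If $\Phi$ is almost additive on $(Y,\sigma_Y)$ with constant $C$, then for $x\in X$, writing $y=\pi x$ and using $\pi\circ\sigma_X=\sigma_Y\circ\pi$, one has $(\phi_n\circ\pi)(x)=\phi_n(y)$, $(\phi_m\circ\pi)(\sigma_X^n x)=\phi_m(\sigma_Y^n y)$ and $(\phi_{n+m}\circ\pi)(x)=\phi_{n+m}(y)$, so $\Phi\circ\pi$---and, multiplying the defining inequality by $-\alpha<0$, also $-\alpha\Phi\circ\pi$---is almost additive on $(X,\sigma_X)$ with constant $\alpha C$; moreover each $\phi_n\circ\pi$ is locally constant on the first $n$ coordinates, so $-\alpha\Phi\circ\pi$ has bounded variation. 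As $(X,\sigma_X)$ is a topologically mixing shift of finite type, Theorem~\ref{BM} applied to $-\alpha\Phi\circ\pi$ yields a unique equilibrium state, which is Gibbs and mixing. By Theorem~\ref{HDDFORSFT} this measure, being mixing and hence ergodic, is precisely the unique $T$-invariant ergodic measure of full dimension for $K$, and it is both Gibbs and mixing.

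I expect the only genuinely delicate point to lie in the first assertion: there $\Phi$ need not be almost additive on $(Y,\sigma_Y)$, so Theorem~\ref{BM} is unavailable for the $Y$-side and one must lean on Proposition~\ref{uniqued} to single out $\nu$ and on Theorem~\ref{coPQS} for uniqueness of the lift, taking care that the preimage of maximal entropy produced by Theorem~\ref{coPQS} is actually ergodic---which here is automatic, since Proposition~\ref{sftwentropy} already supplies an ergodic maximizer with which it must coincide. The remaining steps (the sign bookkeeping in the almost-additive transfer, and the estimate $\nu([a])>0$ from the Gibbs property) are routine.
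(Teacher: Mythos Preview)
Your proposal is correct and follows essentially the same approach as the paper: identify measures of full dimension with equilibrium states of $-\alpha\Phi\circ\pi$ via Theorem~\ref{HDDFORSFT}, then for the singleton-clump case combine Proposition~\ref{uniqued} (giving a unique Gibbs equilibrium state $\nu$ for $\Phi$, hence $\nu([a])>0$) with Theorem~\ref{coPQS}, and for the almost-additive case transfer almost additivity to $-\alpha\Phi\circ\pi$ and invoke Theorem~\ref{BM}. You have in fact supplied more detail than the paper (the bounded-variation check, the explicit Gibbs estimate for $\nu([a])$, and the ergodicity of the unique lift), all of which is correct.
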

\begin{remark}
$\Phi$ is not always almost additive even when $\pi$ has a
singleton clump (Example \ref{eshin}). $\Phi$ can be almost
additive even when $\pi$ has no singleton clump (Example
\ref{nosc}).
\end{remark}
\begin{proof}  By Theorem \ref{HDDFORSFT}, we
identify the $T$-invariant ergodic measures of full dimension with
the ergodic equilibrium states for $-\alpha \Phi\circ\pi.$ If
$\Phi$ is almost additive, then $-\alpha \Phi\circ\pi$ is almost
additive. Therefore, in this case, applying Theorem \ref{BM},
there is a unique equilibrium state and the unique measure is
Gibbs and mixing. If $\pi$ has a singleton clump, first notice by
Proposition \ref{sftwentropy} that  the equilibrium states are the
measures of maximal relative entropy over the unique measure $\nu$
for $\Phi.$ Since by Proposition \ref{uniqued} the unique
equilibrium state $\nu$ is Gibbs, if $1$ is the singleton clump,
then $\nu([1])>0.$ Applying Theorem \ref{coPQS}, we conclude that
there is a unique equilibrium state for $-\alpha \Phi\circ\pi.$
Therefore, there is a unique $T$-invariant ergodic measure of full dimension.
\end{proof}
\section{Examples}
In this section, we give examples that illustrate the results on sections \ref{main1} and \ref{application}.
Let $(X,\sigma_X), (Y, \sigma_Y)$ be subshifts and $\pi:(X,\sigma_X)\rightarrow (Y,\sigma_Y)$ a
factor map.  Let $\alpha>0.$ For all $n\in \N$, define
$\phi_n:Y\rightarrow \R$ by ${\phi_n}(y)= \log \vert \pi^{-1}[y_1
\cdots y_n] \vert^{1/(\alpha+1)}, \phi^{0}_{n}:Y\rightarrow \R$ by
$\phi^{0}_{n}(y)=\log \vert \pi^{-1}[y_1y_2 \cdots y_n]\vert$ and
$\psi_{n}:Y\rightarrow \R$ by
$\psi_{n}(y)=\log \vert D_n(y)\vert.$ Let
$\Phi=\{\phi_n\}^{\infty}_{n=1}$ and
$\Phi_{0}=\{\phi^{0}_{n}\}^{\infty}_{n=1}$. Define
$F:Y\rightarrow \R$ by $F(y)=P(\sigma_X,\pi,0)(y)$ for all $y \in Y.$
\begin{example} \cite{S1} (Singleton clump case without $\Phi_0$ being almost additive)\label{eshin}
\end{example}
This example appeared in Example 3.1 \cite{S1} and it was shown
that there is no {\em continuous} saturated compensation function.
Applying Theorems \ref{iPS} and \ref{thm1}, we find a Borel
measurable saturated compensation function $-F\circ \pi,$ where
$F=\lim_{n\rightarrow \infty}(1/n)\phi^{0}_{n}$ a.e. with respect
to every $\sigma_Y$-invariant Borel probability measure on $Y$. We
claim  that $\Phi_{0}$ is not almost additive on $(Y,\sigma_Y)$.
Let $X\subset \{1, 2, 3, 4,5\}^{\N}$ and $Y\subset \{1, 2\}^{\N}$
be the shifts of finite type determined by the transitions given
by Figure \ref{fig13}. Define $\pi$ by $\pi(1)=1,$
$\pi(2)=\pi(3)=\pi(4)=\pi(5)=2.$
\bigskip
\begin{center}
\xymatrix{ & & 5  \ar@{->}[rr]  \ar@{<->}[ddrr] & &   1  \ar@{->}
[dd] \ar@{<->}[rr]  & &   2 \ar@(ur,dr)
 & & & &
1 \ar@{<->}[dd] \\
& & & & & &  \ar [r]^\pi & &  \\
& & 4  \ar@{<->}[rr] & & 3  & &   & & & & 2 \ar@(ur,dr)}
\end{center}
\begin{figure}[h]
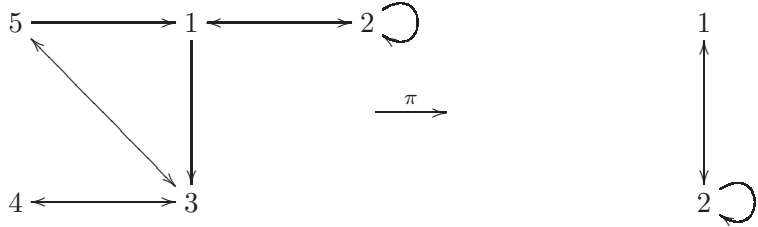

{} \caption{$X,Y,$ and $\pi$ in Example \ref{eshin}} \label{fig13}
\end{figure}
In \cite{S1}, it was shown that for $n$ odd $\vert
\pi^{-1}[12^n1]\vert=1$ and for $n=2k$ even $\vert
\pi^{-1}[12^{2k}1]\vert =2^{k-1}+1.$ For $n$ odd, $\vert
\pi^{-1}[12^n1]\vert /(\vert \pi^{-1}[12]\vert \vert
\pi^{-1}[2^{n-1}1]\vert)$ is clearly not bounded below by a
positive constant. Therefore $\Phi_{0}$ is not almost additive.
Applying Theorem \ref{uniqueness}, an NC-SFT carpet corresponding
to $(X, Y, \pi)$
has a unique $T$-invariant ergodic measure of full dimension. The Hausdorff dimension of the set is given by the formulas in Theorem \ref{HDDFORSFT}.
\begin{example}\label{ythesis}(Singleton clump case with $\Phi_0$ being almost additive)
\end{example}
Let $X\subset \{1, 2, 3, 4\}^{\N}$ and $Y= \{1, 2\}^{\N}$ be the
shifts of finite type determined by the transitions given by
Figure \ref{fig12}. Define $\pi$ by $\pi(1)=1,$
$\pi(2)=\pi(3)=\pi(4)=2.$ We cannot apply theorems in
\cite{Yayama} to find a saturated compensation function
$G\circ\pi, G\in (Y)$, because the transition matrix among symbols
of the preimages of $\{2\}$ is not primitive and so $G$ is not
defined. Applying Theorems \ref{iPS} and \ref{thm1}, we find a
Borel measurable saturated compensation function $-F\circ \pi,$
where $F=\lim_{n\rightarrow \infty}(1/n)\phi^{0}_{n}$ a.e. with
respect to every $\sigma_Y$-invariant Borel probability measure on
$Y$. We claim that $\Phi_{0}$ is almost additive on $(Y,
\sigma_Y)$.
\bigskip
\begin{center}
\xymatrix{ & & 1 \ar@(ul,dl) \ar@{<-}[ddrr] \ar@{<->}[rr]
\ar@{<-}[dd]& &   2 \ar@{<->} [ddll]   & &
 & & 1 \ar@{<->}[dd]  \ar@(ur,dr )\\
& & & & & &   \ar [r]^\pi & &  \\
& & 3   \ar@{<->}[rr]& & 4  & & & & 2 \ar@(ur,dr )}
\end{center}
\bigskip
\begin{figure}[h]
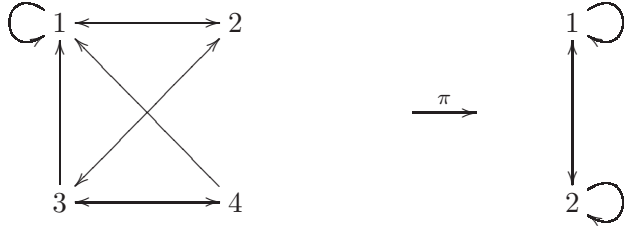

{} \caption{$X,Y$ and $\pi$ in Example \ref{ythesis}}
\label{fig12}
\end{figure}
\noindent Let $a$ be the largest eigenvalue of the irreducible
matrix among symbols of the preimages of $\{2\}.$ Notice that
$\vert \pi^{-1}[1^{l_1}2^{n_1}1^{l_2}2^{n_2}1^{l_3}]\vert=\vert
\pi^{-1}[12^{n_1}1]\vert \vert \pi^{-1}[12^{n_2}1]\vert.$ Define
$1^{0}2^{n}=2^{n}1^{0}=2^{n}$. It is enough to show that for
$n=k_1+k_2, k_1, k_2\geq 1$ there exists $C_1, C_2>0$ such that
$$C_1 \leq \frac{\vert\pi^{-1}[1^{i}2^{n}1^{j}]\vert}{{\vert\pi^{-1}[1^{i}2^{k_1}]\vert \vert\pi^{-1}[2^{k_2}1^{j}]\vert}}, \frac{\vert\pi^{-1}[12^{n}1^{j}]\vert}{{\vert\pi^{-1}[1]\vert \vert\pi^{-1}[2^{n}1^{j}]\vert}}, \frac{\vert\pi^{-1}[1^{i}2^{n}1]\vert}{{\vert\pi^{-1}[1^{i}2^n]\vert \vert\pi^{-1}[1]\vert}}\leq C_2,$$ for $0\leq i,j\leq1$.
Applying the Perron-Frobenius Theorem, we can find
$\widetilde{C_1},\widetilde{C_2}>0$ such that
$$\widetilde{C_1}a^{n-1}\leq \vert \pi^{-1}[12^{n}1]\vert, \vert
\pi^{-1}[2^{n}]\vert, \vert \pi^{-1}[12^{n}]\vert, \vert
\pi^{-1}[2^{n}1]\vert \leq \widetilde {C_2}a^{n-1} \textnormal{
for all } n\in\N.$$ Therefore,
 $\Phi_{0}$ is almost additive. By Theorem
\ref{uniqueness}, an NC-SFT carpet corresponding to $(X,Y,\pi)$
has a unique $T$-invariant ergodic measure of full dimension and
it is Gibbs and mixing. The Hausdorff dimension of the set is
given by the formulas in Theorem \ref{HDDFORSFT}.
\begin{example}(No singleton clump case)\label{nosc}
\end{example}
We construct $(X,Y,\pi)$ such that $\Phi$ is almost additive on
$(Y, \sigma_Y)$. Let $X\subset \{1, 2, 3, 4\}^{\N}$ and $Y= \{1,
2\}^{\N}$ be the shifts of finite type determined by the
transitions given by Figure \ref{fig11}.
\bigskip
\begin{center}
\xymatrix{ & & 1 \ar@(ul,dl) \ar@{->}[ddrr] \ar@{<->}[rr]
\ar@{<->}[dd]& &   2 \ar@{<-} [dd]  \ar@(ur,dr) & &
 & & 1 \ar@{<->}[dd]  \ar@(ur,dr )\\
& & & & & &   \ar [r]^\pi & &  \\
& & 3   \ar@{<->}[rr]& & 4 \ar@(ur,dr) & & & & 2 \ar@(ur,dr )}
\end{center}
\bigskip
\begin{figure}[h]
{} \caption{$X,Y$ and $\pi$ in Example \ref{nosc}} \label{fig11}
\end{figure}
Define $\pi$ by $\pi(1)=\pi(2)=1,$ $\pi(3)=\pi(4)=2.$ Let $A_1$ be
the transition matrix of symbols of 1 and 2 in $X$ and $A_2$ be
the transition matrix of symbols of 3 and 4 in $X.$ Then the
largest positive eigenvalue of $A_1$ is 2 and the largest positive
eigenvalue of $A_2$ is $(1+\sqrt 5)/2.$ Let $a=(1+\sqrt 5)/2.$
Clearly, we have $\vert \pi^{-1}[12^n1]\vert =\vert
\pi^{-1}[2^n]\vert$ for all $n\in \N$ and  $\vert
\pi^{-1}[12]\vert =\vert \pi^{-1}[21]\vert=\vert\pi^{-1}[2]\vert$.
It is easy to see that for  $t\geq 1, k_1 \cdots k_t, l_1,\cdots
l_t \geq 1,$
$$\vert \pi ^{-1}[2^{k_1}1^{l_1}\cdots 2^{k_t}1^{l_t}]\vert=\vert
\pi ^{-1}[1^{l_1}2^{k_1}\cdots 1^{l_{t}}2^{k_t}]\vert
=2^{l_1+\cdots l_t-2(t-1)-1}\vert\pi^{-1}[2^{k_1}]\vert \cdots
\vert\pi^{-1}[2^{k_t}]\vert,$$ and for $t\geq 2,$
\begin{equation*}
\begin{split}
&\vert \pi ^{-1}[2^{k_1}1^{l_1}\cdots 1^{l_{t-1}}2^{k_t}]\vert=2^{l_1+\cdots l_{t-1}-2(t-1)}\vert\pi^{-1}[2^{k_1}]\vert \cdots \vert\pi^{-1}[2^{k_t}]\vert,\\
&\vert \pi ^{-1}[1^{l_1}2^{k_1}\cdots 2^{k_{t-1}}1^{l_t}]\vert=2^{l_1+\cdots l_{t}-2(t-2)-2}\vert\pi^{-1}[2^{k_1}]\vert \cdots \vert\pi^{-1}[2^{k_{t-1}}]\vert.\\
\end{split}
\end{equation*}
\noindent Applying the Perron-Frobenius Theorem, for $k \in \N,$
there exist $C_1, C_2>0$ such that $C_1 a^{k-1}\leq \vert
\pi^{-1}[2^k]\vert\leq C_2 a^{k-1}$.  Then calculations show
(\ref{aadditiveinourcase}) and so $\Phi$ is almost additive.
Applying Theorems \ref{iPS} and \ref{thm1},  $-F\circ \pi,$ where
$F=\lim_{n\rightarrow \infty}(1/n)\phi^{0}_{n}$ a.e. with respect
to every $\sigma_Y$-invariant Borel probability measure on $Y$, is
a Borel measurable saturated compensation function for $(\sigma_X,
\sigma_Y, \pi)$ and $\Phi_{0}$ is almost additive on $(Y,
\sigma_Y).$ By Theorem \ref{uniqueness}, an NC-SFT carpet
corresponding to $(X, Y,\pi)$ has a unique $T$-invariant ergodic
measure of full dimension. It is Gibbs and mixing. The Hausdorff
dimension of the set is given by the formulas in Theorem
\ref{HDDFORSFT}.
\begin{example}(A subshift with  $\psi_n$ being continuous for all $n\in \N$)\label{subshift1}
\end{example}
Let $X\subset \{1, 2, 3, 4\}^{\N}$ be the subshift with the
following forbidden blocks: 23,24,32,42,$12^{n}1$ for $n\geq 4$
and $1C_n1$ where $C_n$ is an allowable word of the full shift of
two symbols $\{3,4\}$ which has length $n\geq 4$. Define $\pi$ by
$\pi(1)=1,\pi(2)=\pi(3)=\pi(4)=2.$  Then $Y \subset \{1,2\}^{\N}$
is the subshift with the forbidden blocks $12^{n}1$ for $n\geq 4.$
It is easy to see that $\psi_n$ is continuous for all $n\in \N$.
Applying Theorem \ref{thm1}, $-F\circ\pi$ is a Borel measurable
saturated compensation function for $(\sigma_X,\sigma_Y,\pi).$ The
Hausdorff dimension of a subshift-NC carpet corresponding to
$(X,Y,\pi)$ is given by any formula in Theorem \ref{application1}
and the $T$-invariant ergodic measures of full dimension are the
ergodic equilibrium states for $-\alpha \Phi^{'}\circ \pi$, where
$\Phi^{'}$ is defined in Theorem \ref{application1}.
\begin{example}(A subshift with $\psi_n$ being discontinuous for all $n\in \N$)\label{subshift2}
\end{example}
Let $X\subset \{1, 2, 3, 4\}^{\N}$ be the subshift with the
following forbidden blocks: 23,24,32,42, $12^{2n+1}1$ for $n\geq
0$ and $1C_{2n}1$ where $C_{2n}$ is an allowable word of the full
shift of two symbols $\{3,4\}$ which has length $2n,n\geq1.$
Define $\pi$ by $\pi(1)=1,\pi(2)=\pi(3)=\pi(4)=2.$ Then $Y
=\{1,2\}^{\N}$. We show that $\psi_n$ is not continuous for all
$n\in \N$. For $n\geq 3,$ let $y=2^{n-2}12^{\infty}\in Y$. Then
$\vert D_n(y)\vert=3\vert\pi^{-1}[2^{n-2}1]\vert$. Let
$z_1=2^{n-2}12^{2k}1\cdots \in Y$ for $k\in \N$. Then $\vert
D_n(z_1)\vert=\vert\pi^{-1}[2^{n-2}1]\vert$ for all $k\in \N.$ Let
$z_2=2^{n-2}12^{2k+1}1\cdots \in Y$ for $k\in \N$. Then $\vert
D_n(z_2)\vert=2\vert\pi^{-1}[2^{n-2}1]\vert$ for all $k\in \N.$
Therefore $\psi_n$ is not continuous at $y=2^{n-2}12^{\infty}$ for
$n\geq 3$. Similarly, for $n=1,2,\psi_n$ is not continuous at
$y=12^{\infty}.$  Applying Theorem \ref{thm1}, $-F\circ\pi$ is a
Borel measurable saturated compensation function for
$(\sigma_X,\sigma_Y,\pi).$ The Hausdorff dimension of a
subshift-NC carpet corresponding to $(X,Y,\pi)$ is given by the
formulas (\ref{HDformula1.1}) and (\ref{HDformula1.2}) and the
$T$-invariant ergodic measures of full dimension are the ergodic
equilibrium states for $-\alpha \Phi^{'}\circ \pi$, where
$\Phi^{'}$ is defined in Theorem \ref{application1}.
\begin{example} \label{notgibbsforc}(On Remark \ref{runiqued})
\end{example}
In this example, we find $(X,Y, \pi)$ such that there is a unique
equilibrium state $\nu$ for $-G\in C(Y)$ which is not Gibbs, where
$G\circ\pi $ is a saturated compensation function. We will see
that $\nu$ is the unique equilibrium state for $\Phi_0$ which is
Gibbs.

Let $X\subset \{1, 2, 3\}^{\N}$ and $Y\subset \{1, 2\}^{\N}$ be
the shifts of finite type determined by the transitions given by
Figure \ref{fig14}. Define $\pi$ by $\pi(1)=1,$ $\pi(2)=\pi(3)=2.$
\bigskip
\begin{center}
\xymatrix{
& & & &     2 \ar@(ur,dr)[] \ar[dd]  \\
& & 1\ar@{<->}[urr] \ar@{<-}[drr] & & & & \ar [r]^\pi & &
1\ar@{<->}
[r] & 2 \ar@(ur,dr) \\
& & & &    3 \ar@(ur,dr)[]            }
\end{center}
\begin{figure}[h]
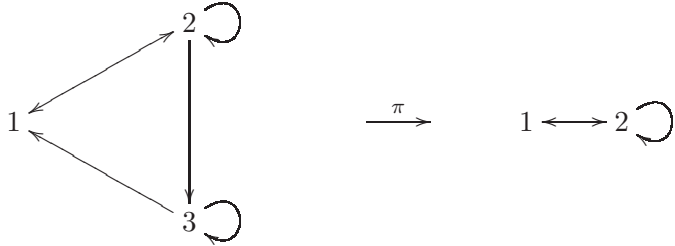

{} \caption{$X,Y,$ and $\pi$ in Example \ref{notgibbsforc}}
\label{fig14}
\end{figure}
By Theorem 3.1 \cite{Yayama}, there is a saturated compensation
function $G\circ\pi,$ where $G\in C(Y)$ is defined by
\begin{equation*}
G(y)= \begin{cases} \log ((n-1)/n) & \textrm{
 if }y\in[2^{n}1], n\geq 2 \\
0 & \textrm{
 if }y\in [1] \cup [21] \cup \{2^{\infty}\},
\end{cases}
\end{equation*}
and $-G$ has a unique equilibrium state $\nu$ which is not Gibbs.
Since by Theorem 3.4 \cite{Wcom},
$$-\int G d m=\int F d m \textnormal{ for all } m \in M(Y,\sigma_Y),$$ we obtain
$$\sup_{m \in M(Y,\sigma_Y)}\{h_{m}(\sigma_Y)-\int G dm \}= \sup_{m \in M(Y,\sigma_Y)}\{h_{m}(\sigma_Y)+\lim_{n\rightarrow\infty}\frac{1}{n}\int \phi^{0}_{n} dm\}.$$
A simple calculation shows that $\Phi_0$ is not almost additive on
$(Y, \sigma_Y).$ By the first part of Remark \ref{runiqued}, there is a unique
equilibrium state for $\Phi_0$ and it is Gibbs. It coincides with
the unique equilibrium state $\nu$ for $-G.$
\section{Problems}
 We finish by mentioning a couple of questions related to the results in section \ref{application}.
Firstly, when is $\phi_n^{'}$ in Theorem \ref{application1} continuous for all $n\in \N$ (see Examples \ref{subshift1} and \ref{subshift2})? Secondly, does the formula (\ref{subshiftwc}) in Theorem \ref{application1} hold without $\phi_n^{'}$ being continuous?\\

{\em Acknowlegements.}
I would like to thank Professor Alejandro Maass and Professor Karl Petersen for helpful discussions and advice.
This research is supported by Fondecyt Postdoctoral Grant N$^{0}3090015$ and Basal Grant at the Centro de Modelamiento Matem\'{a}tico, Universidad de Chile.

\end{document}